\def\amsbb{\use@mathgroup \M@U \symAMSb}
\newtheorem{theorem}{Theorem}
\newtheorem{rem}{Remark}
\newtheorem{defn}{Definition}
\newtheorem{opt}{Optimization Problem}
\newtheorem{lem}{Lemma}
\newtheorem{cor}{Corollary}
\newcommand{\E}{\amsbb{E}}
\newcommand{\tB}{\Tilde{B}}
\newcommand{\tX}{\Tilde{X}}
\newcommand{\cE}{\mathcal{E}}
\newcommand{\cN}{\mathcal{N}}
\newcommand{\R}{\amsbb{R}}
\newcommand{\loc}{\mathrm{loc}}
\title{Optimal two-parameter portfolio management strategy with transaction costs}
\begin{document}

\author{Chutian Ma}
       \address[Chutian Ma]{Department of Mathematics, Johns Hopkins University}
       \email{cma27@jhu.edu}

\author{Paul Smith}
       \address[Paul Smith]{Causify.AI Inc.}
       \email{paul@causify.ai}
\maketitle

\begin{abstract}
We consider a simplified model for optimizing a single-asset portfolio in the
presence of transaction costs given a signal with a certain autocorrelation
and cross-correlation structure. In our setup, the portfolio manager is given
two one-parameter controls to influence the construction of the portfolio.
The first is a linear filtering parameter that may increase or decrease the
level of autocorrelation in the signal. The second is a numerical threshold
that determines a symmetric ``no-trade" zone. Portfolio positions are
constrained to a single unit long or a single unit short. These constraints
allow us to focus on the interplay between the signal filtering mechanism
and the hysteresis introduced by the ``no-trade" zone.
We then formulate an optimization problem where we aim to minimize the
frequency of trades subject to a fixed return level of the portfolio.
We show that maintaining a no-trade zone while removing autocorrelation
entirely from the signal yields a locally optimal solution.
For any given ``no-trade" zone threshold, this locally optimal solution
also achieves the maximum attainable return level, and we derive a
quantitative lower bound for the amount of improvement in
terms of the given threshold and the amount of autocorrelation removed.
\end{abstract}

\tableofcontents

\section{Introduction}
We consider an optimization problem for a discrete-time stochastic system
motivated by a portfolio construction and optimization question that arises in
practice in quantitative finance in the presence of transaction costs. Suppose
a portfolio manager is in possession of a signal that is positively correlated
with the immediate subsequent price movement of the asset but uncorrelated
otherwise.  The manager seeks to build his portfolio so that the expected
return is maximized over the trading period. A greedy approach would be to set
the portfolio proportional to the signal strength at each stage.  However, in
the presence of transaction costs, trading frequently is costly, incurring
slippage, and diminishes the total return. Thus, the portfolio manager must
weigh the potential gain and slippage against each other in order to obtain an
optimal balance. In this paper, we formulate the preceding question in an
optimization problem where the manager seeks to minimize the number of trades,
thus minimizing the impact of transaction costs, under the constraint of a
fixed return level.

In the absence of transaction costs, the portfolio optimization problem was
first studied by Merton in \cite{merton1969lifetime} and
\cite{merton1975optimum} in the continuous time seetting. Without transaction
costs, the optimal policy is to update the portfolio to the optimal state
predicted by the signal at each period, which is called ``Markowitz portfolio".
However, in the presence of transaction costs, this is often not optimal, as
trading into a large position, for example, will incur future costs when it is
time to unwind the trade. In fact, Magill and Constantinides show in
\cite{magill1976portfolio} that the optimal policy in the presence of
transaction costs takes the form of a ``no-trade zone". They proved in the
multi-asset context that the investor should keep the proportion of the wealth
invested in various assets inside a certain region. In other words, if the
investor starts with his portfolio inside the no-trade zone, a future
transaction only occurs at the boundary of the no-trade zone. In the context of
single-asset portfolio, the policy of a no-trade zone simply means that we only
trade when the predictive signal exceeds a specified threshold.

Since the work of Magill and Constantinides, various results have been
established regarding the properties and characterization of the no-trade zone
policy. For example, Constantinides in \cite{constantinides1979multiperiod}
gave a detailed analysis of the shape and other analytic properties of the
no-trade zone. More recently, \cite{DeMeNo16} derived a closed form expression
for the no-trade zone.

An alternative approach to reduce the slippage from transaction costs is to
``slow down" the trades. Nicolae G\^arleanu and Lasse Heje Pedersen in their
paper \cite{garleanu2013dynamic} showed that rather than pursuing the Markowitz
portfolio at each period, the portfolio manager should instead keep his
portfolio as a linear combination of the current portfolio and an ``aim"
portfolio, which the authors summarize as ``trade partially toward the target".
In this paper, we propose an alternative approach to slow down trading. Namely,
the portfolio manager may choose to slow down the signal instead of the
portfolio via the application of moving averages. Mathematically, a moving
average is a convolutional operator.  Thus it is able to reduce the amount of
erroneous trading by de-noising the signal. See \cite{genccay2001introduction}
Chapter 3 for a detailed discussion.  In addition to its de-noising properties,
moving averages are widely used to extract information from financial time
series. Zumbach and M\"{u}ller in \cite{zumbach2001operators} treated
inhomogeneous time series with moving averages. They developed a framework so
that a set of basic moving averages can be combined to estimate more
sophisticated quantities, such as different kinds of volatility and
correlation. For earlier work that deal with homogeneous time series, we refer
the reader to \cite{granger2014forecasting}, \cite{priestley1988non} and
\cite{hamilton2020time}.  Among various filters, the exponential moving average
is a simple yet successful one and indeed has many optimal properties in
forecasting. See \cite{muth1960optimal} for a general discussion and
\cite{raudys2013moving} for its application in finance data smoothing. In
general, passing the signal through filters helps de-noise the signal, which in
turn reduces trading and hence the cumulative transaction costs.

In this paper we aim to analyze the portfolio management problem of a single
asset with both a no-trade zone and a moving average incorporated into our
strategy. We will study the interaction between the two methods and find the
optimal no-trade zone and the optimal amount of smoothing that together
minimize the trading frequency subject to a fixed return level constraint. The
problem can be formulated as an optimization problem, with two parameters
$\alpha$ and $\eta$, where $\eta$ controls the size of the no-trade zone and
$\alpha$ controls the strength of smoothing. We prove that using a no-trade
zone alone is sufficient to yield the locally optimal solution. In other words,
if the portfolio manager starts with an autocorrelated signal, it is more
effective to remove the autocorrelation and rely on the no-trade zone alone in
portfolio construction.
\begin{rem}
    Note that heuristically our result is consistent with the Markov nature of
    the problem, i.e., the reward function depends only on the current state.
    One will indeed expect the optimal policy to use the information from
    the current period only (see \cite{bertsekas1996stochastic} for a detailed
    discussion).
\end{rem}

\section{Problem Formulation}
In this paper, we consider a simplified case where the portfolio consists of a
single asset and we allow only two positions: namely go long by 1 unit or short
by 1 unit. The policy of having a no-trade zone is then reduced to setting
symmetric thresholds and switching positions only when the signal crosses the
opposing threshold.

Informally, we have access to a real-valued random \emph{signal} $X_t$ at
time $t$ that is correlated with a real-valued random \emph{return} $Y_t$.
Morever, we assume that $X_t$ is uncorrelated with $Y_s$ for $t \neq s$.
At time $t$, we observe $X_t$ and then determine a position $w_t$. After
entering position $w_t$, we realize the \emph{reward} $w_t \cdot Y_t$.
If $w_t$ differs from $w_{t - 1}$ (we discuss initialization in more detail
below), then we incur some cost, e.g., a \emph{transaction cost}.
So far, if we introduce some assumptions on the moments of $Y_t$ (e.g., the
first and second moments exist and are finite), this corresponds to a
simplified version of a standard portfolio optimization problem.

We simultaneously consider two extensions of this standard problem.
First, we assume some additional structure on the signal $X_t$ over time,
either given a priori or determined through the selection from a 1-parameter
family of linear filters. Second, we consider the reward/cost trade-off over
the long-term (multiple periods) in that we consider average rewards and
average costs.

More formally, let $\{X_t\} \sim \cN(0, 1)$, $t = 0, 1, 2, \ldots$, denote a
sequence of independent identically distributed (\emph{iid}) Gaussian random
variables. We refer to this sequence of random variables as the signal. Let
$\{\varepsilon_t\} \sim \cN(0, 1)$, $t = 0, 1, 2, \ldots$ denote another
iid sequence of Gaussian random variables, independent from the first. We refer
to this as the noise sequence. Let $0 < \rho < 1$ be a fixed correlation
coefficient, and define the sequence $\{Y_t\}$ of random variables via
\begin{equation}\label{y_t}
    Y_t = \rho X_t + \sqrt{1 - \rho^2} \epsilon_t.
\end{equation}
We interpret $\{Y_t\}$ as the return of a financial instrument. Note that
we work with a standard random walk rather than an exponential random walk.
On short time scales, a standard random walk (or, in the continuous limit,
a standard Brownian motion) may be a more appropriate price model than an
exponential one (e.g., see the discussion in \cite{BoBoDoGo18}[\S 2.1.1]).

\subsection{Constraining the signal}

Above we introduced the signal $X_t$ as a sequence of iid Gaussians. An
important and naturally motivated class of signals is the class of stable first
order linear autoregressive processes, i.e., stable AR(1) processes. The
continuous analogue is the Ornstein-Uhlenbeck process (not considered in this
work). References for the role of AR(1) processes in time series modeling,
including how they fit into the broader ARIMA framework, structural time series
modeling, and Bayesian frameworks, include \cite{BoJe76}, \cite{Ha89}, and,
more recently, \cite{PrFeWe21}.

AR(1) models are a natural object of study in a portfolio signal context for at
least three reasons:
\begin{enumerate}
    \item A signal generating process may exhibit AR(1)-type behavior natively
    \item A signal whose properties appear noise-like becomes AR(1) after
        exponential smoothing (a common filtering operation). 
    \item An AR(1) model may stand as a sufficiently good approximation of a
        more complicated process
\end{enumerate}
Focusing attention on standard Gaussian AR(1) processes is in part motivated by
a standard practice of preparing candidate signals to have, unconditionally,
mean zero, standard deviation one, transformed outliers (through Winsorizing,
trimming, or some other process), and a Gaussian-like profile. A second
motivation comes from the common practice of passing a ``fast" signal (one with
autocorrelation much less than 1) through a linear low-pass filter, such as an
infinite impulse response moving average filter.

A third motivation relates to the ergodic behavior of such processes.
Informally speaking, an AR(1) process is positive as much and as often as it is
negative. Translated into a portfolio context, this implies that, over suitably
long horizons, positive or negative positions should not ``build up" (which may
suggest unresolved cointegration effects to remove, or would have adverse risk
implications for a portfolio over the long term).

For our purposes, we restrict attention to Gaussian AR(1) processes with
autocorrelation between zero (the iid Gaussian case) and one. For convenience,
we normalize such processes to have standard deviation one. In fact, every such
AR(1) process may be obtained from an iid Gaussian process through exponential
smoothing, and so we introduce the 1-parameter family of smoothed signals
$\tX_t$, which are obtained from $X_t$ through exponential smoothing:
\begin{defn}[Exponential moving average]\label{defn EMA}
Given $0 \leq \alpha < 1$, define
    \begin{equation}
        X_t \xrightarrow{\text{EMA}} \tX_t = \sqrt{1 - \alpha^2}(X_t + \alpha X_{t-1} + \alpha^2 X_{t-2} + \alpha^3 X_{t-3} + ...)
    \end{equation}
\end{defn}
Note that $\tX_t \sim \mathcal{N}(0,1)$ unconditionally and
\begin{equation} \label{defn:tilde_xt}
    \tX_t = \alpha \tX_{t-1} + \sqrt{1-\alpha^2} X_t.
\end{equation}
Observe that $\tX_{t-1}$ and $X_t$ are independent and that, by construction,
$\tX_t$ is an AR(1) process with autocorrelation $\alpha$.  We note that an
analogous correspondence exists in the continuous case \cite{BiPaTa08}.

\subsection{Constraining the portfolio}
In this section we take the signal $\tX_t = \tX_t(\alpha)$ as given and
and consider a simple portfolio construction procedure based on
thresholding and hysteresis:
\begin{defn}[Portfolio construction rule]
    Given smoothed signal $\tX_t$ and parameter $\eta > 0$,
    we the construct the portfolio $W_t$ by
    \begin{equation} \label{defn:w_t}
        W_t = \left\{\begin{aligned}
            & 1,\hspace{2em} \tX_t \geq \eta \\
            & -1, \hspace{2em} \tX_t \leq -\eta \\
            & 1,\hspace{2em} -\eta < \tX_t < \eta, W_{t-1} = 1 \\
            & -1,\hspace{2em} -\eta < \tX_t < \eta, W_{t-1} = -1 
        \end{aligned}\right.
    \end{equation}
\end{defn}
In the absence of transaction costs, an optimal portfolio would take a position
proportional to the strength of the signal (at least under the assumptions we
have made on our stochastic processes). In practice, this behavior is not
followed in certain regimes. For example, if the absolute value of the signal
is sufficiently large (in theory, it is unbounded), then a proportional
position may exceed a risk budget. On the other hand, small deviations in the
signal may imply small trades, which, in the presence of transaction costs,
may cease to have positive expected value. The first point may be formalized
in a risk model by imposing a limit on the $L^\infty$ norm in a mean-variance
optimization. The latter behavior may be observed should a transaction cost
term, proportional to the absolute value of the trade size, be included in the
optimization problem (e.g., see the discussion of a ``no-trade" region in
\cite{DeMeNo16}).

Rather than consider the full mean-variance optimization problem, we simplify
by admitting two states: a ``long" position of +1, and a ``short" position of
-1. See figure \ref{fig portfolio} for an illustration of how the thresholds
define the portfolio. Here the upper and lower thresholds are set to be +1 and
-1.
\begin{figure}[h]
    \centering
    \includegraphics[width=0.5\textwidth]{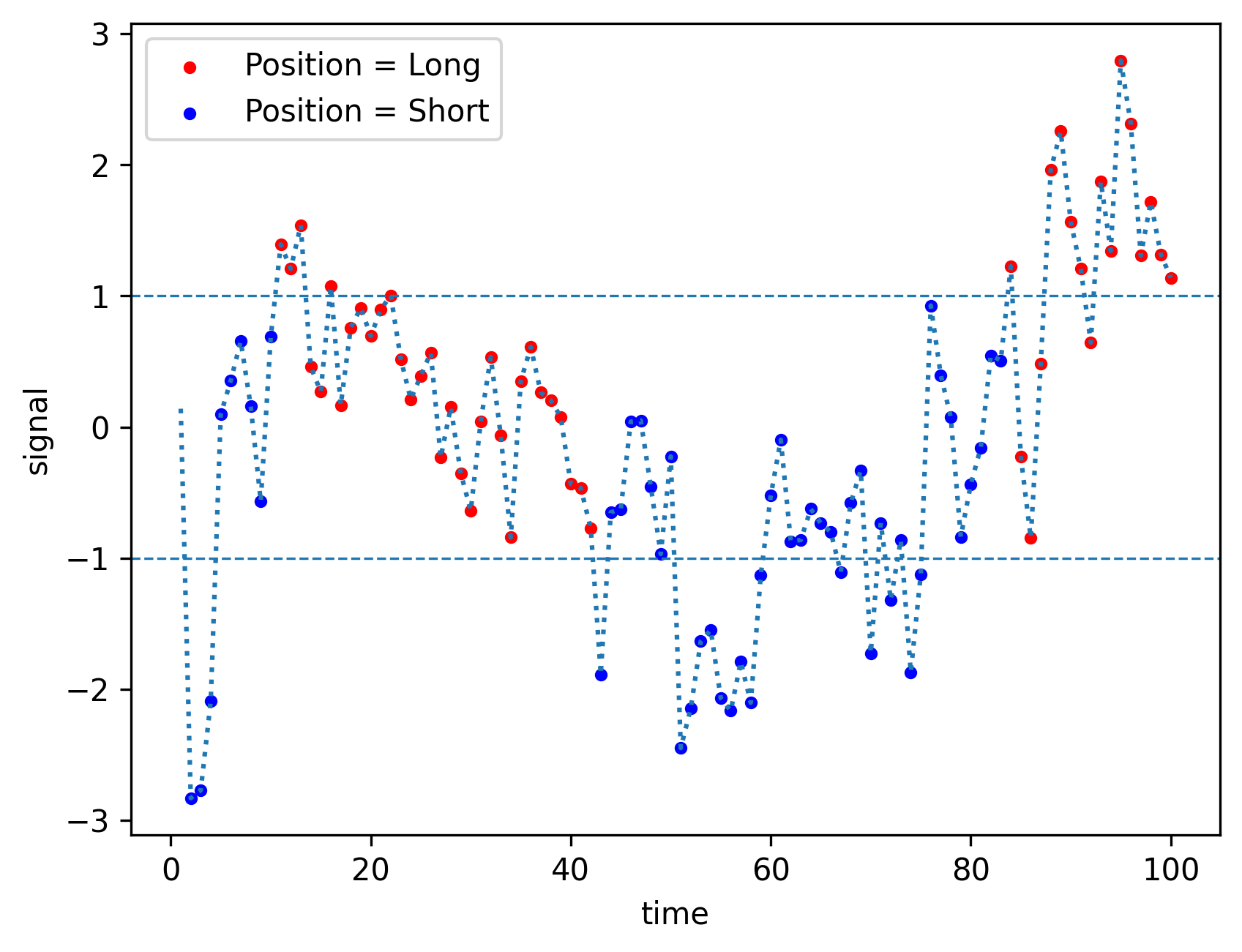}
    \caption{An AR(1) signal and its corresponding portfolio}
    \label{fig portfolio}
\end{figure}

\subsection{The optimization problem}

The return of the portfolio can be measured by the correlation between target
and portfolio.
\begin{defn}[Portfolio target correlation function]
Define $K(\alpha, \eta)$ by
\begin{equation}\label{correlation func}
    K(\alpha, \eta) := \E[W_t Y_t]
\end{equation}
\end{defn}
We acknowledge that this measurement is in fact an approximation of return,
as the value of the portfolio will in general fluctuate from period to
period in the absence of trading. However, when sufficiently short time
periods and typical scales of returns are considered, these effects are
second order, and so for simplicity we choose to ignore them in favor of a
simple measurement of performance.

Various models of transaction costs and the resulting optimization problems
have been studied, e.g. linear model in \cite{davis1990portfolio}, quadratic
model in \cite{garleanu2013dynamic}, and an in-between model in
\cite{boyd2017multi}.  In this paper, we do not assume a specific model for
transaction costs. Instead, we measure the negative effects of transaction
costs by trading frequency. Due to transaction costs and the risk of slippage,
we generally want to reduce the frequency of trading. We define the following
survival time function to measure the time needed for $\tX_t$ to pass the lower
barrier $-\eta$ given some starting position $x_0 \in \R$.

\begin{defn}[Survival time]
    Let $\alpha \in [0, 1)$ and suppose
    $\tX_t$ and $W_t$ are defined as in \eqref{defn:tilde_xt} and
    \eqref{defn:w_t}, respectively.
    For $\eta > 0$ and initial position $\tX_0 = x_0$, we define the survival
    time random variable $\tau_{\alpha, \eta, x_0}$ as
    \begin{equation}\label{stopping_time}
        \tau_{\alpha, \eta, x_0} := \inf_{t} \{t | \tX_t \leq -\eta \}
    \end{equation}
    We define the expected survival time as
    \begin{equation}\label{target function}
        H(\alpha, \eta) = \E_{x_0}[\tau_{\alpha, \eta, x_0} | x_0 > \eta]
    \end{equation}
\end{defn}

We formulate the problem as follows:

\begin{opt}\label{prob1}
    Given $c > 0$, maximize the expected survival time \eqref{target function}
    under the correlation constraint $K(\alpha, \eta) \geq c$.
\end{opt}

\section{Main results}
Our main result is stated in the following Theorem:
\begin{theorem}\label{thm opt}
    A locally optimal solution to the constrained optimization problem
    \ref{prob1} occurs at $\alpha = 0$ and $\eta_0$ such that $K(0, \eta_0) =
    c$. In other words, to optimize the portfolio it suffices to adjust the
    threshold without applying any small amount of AR(1) smoothing.  The
    appendix includes technical results required by our argument.
\end{theorem}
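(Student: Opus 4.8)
The plan is to verify the Karush--Kuhn--Tucker conditions for Optimization Problem~\ref{prob1} at the boundary point $(\alpha,\eta)=(0,\eta_0)$, treating $\alpha=0$ as a one-sided boundary of the admissible set $\alpha\in[0,1)$. First I would simplify the objective and the constraint. Since $\epsilon_t$ is independent of $\{X_s\}$ and $W_t$ is measurable with respect to $\sigma(X_0,\dots,X_t)$, the noise term drops out and $K(\alpha,\eta)=\rho\,\E[W_t X_t]$. At $\alpha=0$ the signal is iid, $W_{t-1}$ is independent of $X_t$, and the symmetric hysteresis band contributes nothing by symmetry, so a direct computation gives the closed forms $K(0,\eta)=2\rho\,\phi(\eta)$ and $H(0,\eta)=1/\Phi(-\eta)$, where $\phi,\Phi$ denote the standard normal density and distribution function. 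In particular $K(0,\cdot)$ is strictly decreasing, so for admissible $c$ the equation $K(0,\eta_0)=c$ has a unique root $\eta_0$ and the constraint is active there, while $\partial_\eta K(0,\eta_0)=-2\rho\,\eta_0\phi(\eta_0)<0$ and $\partial_\eta H(0,\eta_0)=\phi(\eta_0)/\Phi(-\eta_0)^2>0$.

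With the constraint active I would introduce the multiplier $\lambda=-\partial_\eta H/\partial_\eta K>0$ and reduce local optimality to a single scalar condition. Parametrizing the active constraint near $\eta_0$ as a curve $\alpha\mapsto(\alpha,\eta(\alpha))$ with $K(\alpha,\eta(\alpha))=c$ and $\eta(0)=\eta_0$, the one-sided derivative of the objective along this curve is
\[
\frac{d}{d\alpha}H(\alpha,\eta(\alpha))\Big|_{\alpha=0^+}=\partial_\alpha H(0,\eta_0)+\lambda\,\partial_\alpha K(0,\eta_0).
\]
Showing that this quantity is nonpositive---so that no feasible perturbation into $\alpha>0$ increases the expected survival time---is precisely the content of local optimality, and it coincides with the KKT sign condition on the multiplier attached to the constraint $\alpha\ge0$.

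The substantive work, and the main obstacle, is the evaluation of the two $\alpha$-derivatives at $\alpha=0$. Both require the joint stationary law of $(\tX_{t-1},\tX_t,W_{t-1})$: the first-order $\alpha$-dependence of $K$ enters through the correlation between $W_{t-1}$ and the current signal, encoded in the hysteresis term $\E[W_{t-1}\tX_t\,\mathbf{1}_{|\tX_t|<\eta}]$, while the $\alpha$-dependence of $H$ is governed by the first-passage structure of the AR(1) process, which has no elementary closed form; these are the technical computations I would defer to the appendix. Heuristically $\partial_\alpha H(0,\eta_0)\ge0$, since positive autocorrelation makes the signal persistent and lengthens survival, whereas $\partial_\alpha K(0,\eta_0)\le0$, since smoothing degrades the instantaneous correlation with the return (consistent with the separate claim that $K(0,\eta)$ maximizes $K(\cdot,\eta)$ for fixed $\eta$). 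The two effects on $H$ along the constraint curve therefore compete, and the result hinges on a quantitative comparison of their magnitudes. I would carry this out by expanding the recursion $\tX_t=\alpha\tX_{t-1}+\sqrt{1-\alpha^2}X_t$ to first order about $\alpha=0$, computing $\partial_\alpha K$ and $\partial_\alpha H$ explicitly, and then verifying $\partial_\alpha H(0,\eta_0)+\lambda\,\partial_\alpha K(0,\eta_0)\le0$ directly; the same expansion is expected to furnish the advertised quantitative lower bound on $K(0,\eta)-K(\alpha,\eta)$ for fixed $\eta$. Should the first-order expression happen to vanish at $\eta_0$, I would fall back to the second-order term along the constraint curve, again combining the closed forms above with the appendix estimates.
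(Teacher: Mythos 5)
Your framework---active constraint, multiplier $\lambda = -\,\partial_\eta H/\partial_\eta K$, and the directional derivative of $H$ along the constraint curve $K=c$---is the same as the paper's, and your closed forms $K(0,\eta)=2\rho f(\eta)$, $H(0,\eta)=1/F(-\eta)$ and the signs of the $\eta$-derivatives are all correct. The genuine gap is that your primary strategy cannot close. The paper computes $\partial_\alpha K(0,\eta) = -\frac{2\rho}{\pi}\eta e^{-\eta^2}$ and $\partial_\alpha H(0,\eta) = 2f(\eta)^2/F(-\eta)^2$, and these satisfy $\nabla H(0,\eta) = \lambda \nabla K(0,\eta)$ \emph{exactly}, with $\lambda = -\frac{1}{2\rho\eta F(-\eta)^2}$, for every $\eta>0$, not just at a special root $\eta_0$. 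Hence the one-sided quantity $\partial_\alpha H + \lambda\,\partial_\alpha K$ that you propose to show is nonpositive is identically zero: the contingency you relegate to your final sentence (``should the first-order expression happen to vanish'') is the only case that occurs, so no conclusion can be drawn at first order. (Also, first-order nonpositivity is necessary, not ``precisely the content of local optimality''; when it holds with equality it certifies nothing.)

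Consequently the entire proof lives in the second-order test along the constraint curve, for which your proposal supplies no method. Two substantial pieces are required. First, $\partial^2 K/\partial\alpha^2(0,\eta)$: the hysteresis drag must be decomposed as $\cE = \sum_{k\ge 0}\cE_k$ by conditioning on how long the smoothed signal has stayed inside the band; one must show $\cE_k = O(\alpha^{k+1})$ (a cancellation argument in rotated coordinates, so that only $k=0,1$ contribute at second order) and then compute $\partial^2\cE_1/\partial\alpha^2(0,\eta) = -8\rho\eta f(\eta)^2\bigl(F(\eta)-F(-\eta)-2f(\eta)\bigr)$ explicitly. Second, $\partial^2 H/\partial\alpha^2(0,\eta)$: since the AR(1) first-passage time has no closed form, the paper characterizes $h=\E[\tau]$ as the solution of a renewal-type integral equation, establishes existence, uniqueness, logarithmic growth, and $C^2$ regularity via approximating contraction operators together with an Arzel\`a--Ascoli and diagonal argument (this regularity is what licenses differentiating the equation twice in $\alpha$), and only then extracts $\partial^2 h/\partial\alpha^2(0,\eta,x)$ by integrating the differentiated equation against $f$. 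With both Hessians in hand one plugs the tangent vector $v=\bigl(\tfrac{1}{2f(\eta)},-1\bigr)$ into $D^2H(v,v)+\lambda\, D^2K(v,v)$ and verifies negativity for all $\eta>0$. Your planned expansion of the recursion ``to first order about $\alpha=0$'' is structurally insufficient for every one of these steps, so the proposal stands as a correct outline of the first-order bookkeeping but is missing the ideas that actually carry the theorem.
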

In figure \ref{fig survival time contour}, we plot the contour lines of the
expected survival time function generated from a numerical simulation of the
process.
\begin{figure}[h]
    \centering
    \includegraphics[width=0.5\textwidth]{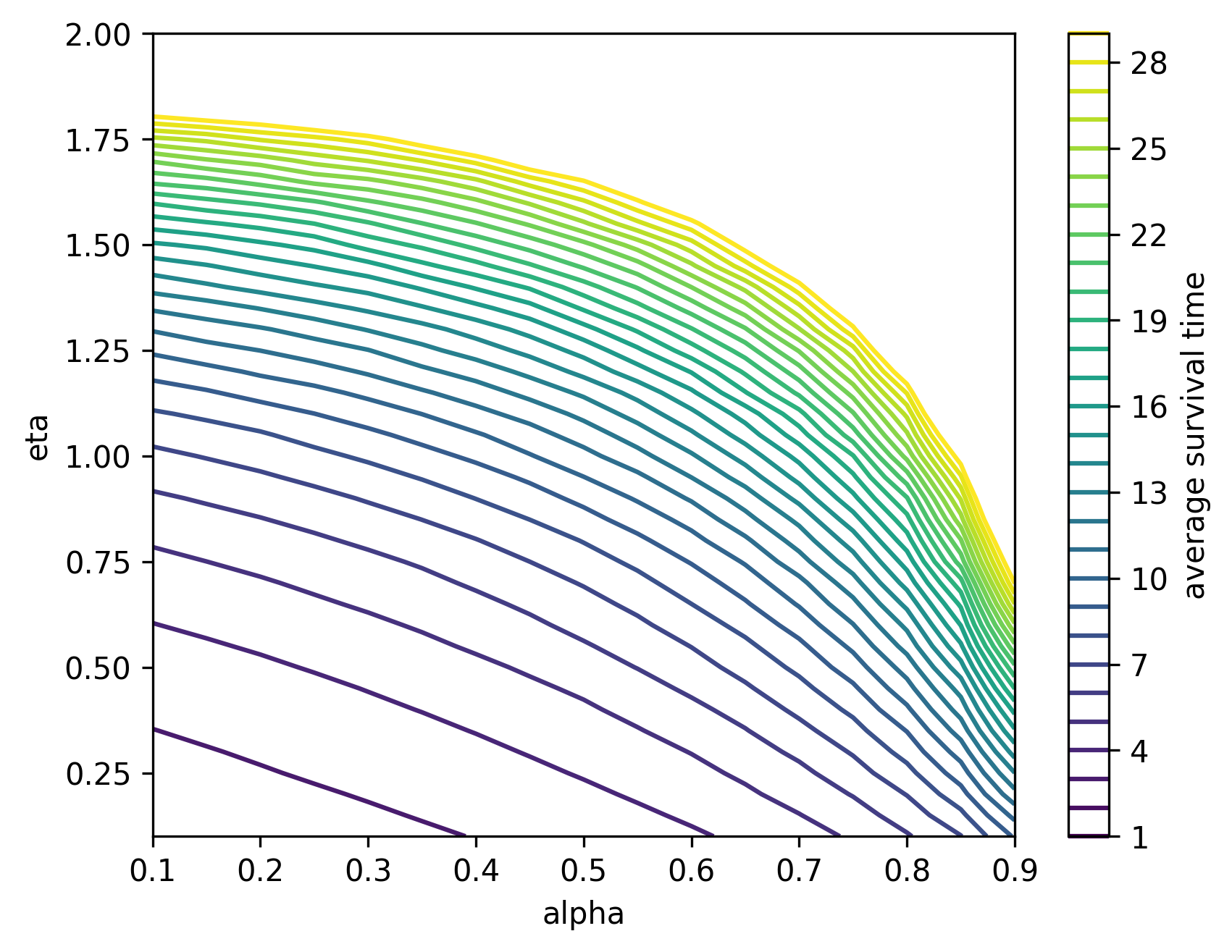}
    \caption{Contour lines of $H(\alpha, \eta)$}
    \label{fig survival time contour}
\end{figure}
We will use the Lagrange multiplier method to prove theorem \ref{thm opt}.
The outline of the paper is as follows.
In \S \ref{sec:corr}, we study the correlation function over the full parameter
space $(\alpha, \eta) \in [0, 1) \times [0, \infty)$ and prove stronger results
near $\alpha = 0$.
In \S \ref{sec:survival_time}, we study the survival time function and prove
the main result in the next section. In \S \ref{sec:future dir}, we comment on
a few related problems remaining to be explored.

\section{Properties of the correlation function}\label{sec:corr}

We begin by decomposing the correlation function \eqref{correlation func} into
two pieces, which we handle separately.

Define $W(x):\R \rightarrow \R$ by
\begin{equation}
    W(x) =
    \begin{cases}
        1 & x \geq \eta \\
        0 & -\eta < x < \eta \\
        -1 & x \leq -\eta
    \end{cases}
\end{equation}
Note that this is distinct from, but related to, the portfolio $W_t$ at time
$t$.

Using \eqref{y_t} and the independence of $W_t$ and $\varepsilon_t$, we have
\begin{equation}\label{eq: E[W_tY_t]}
    \begin{aligned}
        K(\alpha, \eta)
        &= \E(W_tY_t) \\
        &= \E(W_t(\rho X_t + \sqrt{1 - \rho^2} \varepsilon_t)) \\
        &= \rho \E(W_t X_t) \\
    \end{aligned}
\end{equation}
Let $\chi(S)$ be the characteristic function of set $S$ in the sample space. We
have
\begin{equation*}
    1 = \chi(\tX_t \geq \eta) + \chi(\tX_t \leq -\eta) +
        \chi(-\eta < \tX_t < \eta, W_{t - 1} = 1) +
        \chi(-\eta < \tX_t < \eta, W_{t - 1} = -1)
\end{equation*}
Using $\chi$, we may split \eqref{eq: E[W_tY_t]} into
\begin{equation}\label{expected return split eq}
    \begin{aligned}
        (\ref{eq: E[W_tY_t]}) = \;&
        \rho \E(W(\tX_t) X_t \chi(\tX_t \geq \eta)) \\
        &+ \rho \E(W(\tX_t) X_t \chi(\tX_t \leq -\eta)) \\
        & + \rho \E(X_t \chi(-\eta < \tX_t < \eta, W_{t - 1} = 1)) \\
        &- \rho \E(X_t \chi(-\eta < \tX_t < \eta, W_{t - 1} = -1)) \\
    \end{aligned}
\end{equation}
Denote the first two terms in \eqref{expected return split eq} by
$K_0(\alpha, \eta)$, i.e.,
\[
    K_0(\alpha, \eta) =
    \rho \left[
    \E(W(\tX_t) X_t \chi(\tX_t \geq \eta))
    + \E(W(\tX_t) X_t \chi(\tX_t \leq -\eta))
    \right]
\]
and the last two terms by $\cE(\alpha, \eta)$:
\[
    \cE(\alpha, \eta) =
    \rho \left[
    \E(X_t \chi(-\eta < \tX_t < \eta, W_{t - 1} = 1))
    - \E(X_t \chi(-\eta < \tX_t < \eta, W_{t - 1} = -1))
    \right].
\]
We may consider $G_0$ as the contribution from a ``strong" signal (relative to
the threshold $\eta$), and $\cE$ as the contribution from a ``weak" signal,
which, given the thresholding rule, results in a hysteresis effect.

\subsection{Estimating the contribution of a strong signal}

We compute $K_0$ explicity.
\begin{equation}
    \begin{aligned}
        K_0(\alpha, \eta)
        &= \rho \E(W(\alpha \tX_{t-1} + \sqrt{1-\alpha^2} X_t) X_t) \\
        &= 2\rho \iint_{\sqrt{1 - \alpha^2} x + \alpha z \geq \eta} x
           \cdot \frac{1}{2\pi}e^{-\frac{x^2 + z^2}{2}} dxdz \\
        &= \frac{2\rho}{2\pi} \int_{-\infty}^{+\infty}
           \int_{-\frac{\alpha}{\sqrt{1 - \alpha^2}} z +
           \frac{\eta}{\sqrt{1 - \alpha^2}}}^{+\infty} xe^{-\frac{x^2}{2}}dx
           e^{-\frac{z^2}{2}} dz \\
        &= \frac{2\rho}{2\pi} \int_{-\infty}^{+\infty}
           e^{-\frac{1}{2(1 - \alpha^2)}
           (\alpha^2 z^2 - 2\alpha\eta z + \eta^2 )}
           \cdot e^{-\frac{z^2}{2}} dz \\
        &= \frac{2\rho}{\sqrt{2\pi}} \int_{-\infty}^{+\infty}
           \frac{1}{\sqrt{2\pi(1 - \alpha^2)}} e^{-\frac{1}{2(1 - \alpha^2)}
           (z - \alpha \eta)^2 - \frac{\eta^2}{2}} dz \\
        &= \frac{2\rho}{\sqrt{2\pi}}e^{-\frac{\eta^2}{2}}\sqrt{1 - \alpha^2}.
    \end{aligned}
\end{equation}

\subsection{Estimating the drag from hysteresis}

The two terms in $\cE$ are both negative and equal due to symmetry.
We estimate them by further conditioning on $W_k$, $ k \leq t$.
Let $\Omega_k$, $k=0, 1, 2, \ldots,$ denote the event
\begin{equation}
    \Omega_k = \bigcap_{j=0}^{k} \{ -\eta < \tX_{t-j} < \eta \}.
\end{equation}
Then we can decompose $\cE$ as the series
\begin{equation}\label{error term sum}
    \begin{aligned}
        \cE(\alpha, \eta)
         = & 2\rho \E(X_t \chi(-\eta < \tX_t < \eta, W_{t - 1} = 1)) \\
         = & 2\rho \E(X_t \chi(-\eta < \tX_t < \eta, \tX_{t - 1} \geq \eta))
           + 2\rho \E(X_t \chi(\Omega_1, W_{t - 2} = 1)) \\
         = & 2\rho \E(X_t \chi(\Omega_0, \tX_{t - 1} \geq \eta)) +
             2\rho \E(X_t \chi(\Omega_1, \tX_{t - 1} \geq \eta)) \\
        & + 2\rho \E(X_t \chi(\Omega_2, W_{t - 3} = 1)) \\
        = & \cdots \\
        = & \sum_{k=0}^{+\infty} 2\rho \E[X_t \chi(\Omega_k, \tX_{t-k-1}
          \geq \eta)]
    \end{aligned}
\end{equation}
We denote the $k$-th term in the sum by $\cE_k$, i.e.
\begin{equation}
    \cE_k(\alpha, \eta) = 2\rho \E[X_t \chi(\Omega_k, \tX_{t-k-1} \geq \eta)]
\end{equation}
Now we evaluate the $\cE_k$ and their derivatives at $\alpha = 0$.

\begin{lem}\label{lem K at alpha=0}
All $\cE_k$ are non-positive and $\frac{\partial^j \cE_k}{\partial \alpha^j}$
vanish at $\alpha = 0$ for all $0 \leq j \leq k$. In particular,
$\cE_k (0,\eta) = 0$ for all k, and
\begin{equation}\label{k>0 term}
    \begin{aligned}
        & \frac{\partial \cE_k}{\partial \alpha}(0,\eta) = 0 \hspace{2em} for\ k \geq 1\\
        & \frac{\partial^2 \cE_k}{\partial \alpha^2}(0,\eta) = 0 \hspace{2em} for\ k \geq 2
    \end{aligned}
\end{equation}

We also compute the following quantities explicitly:
\begin{equation}\label{k=0 term}
    \cE_0(\alpha, \eta) =
    \frac{2\rho}{\sqrt{2\pi}} e^{-\frac{\eta^2}{2}} \sqrt{1 - \alpha^2}
    \left(F(\sqrt{\frac{1 - \alpha}{1 + \alpha}}\eta)
    - F(\sqrt{\frac{1 + \alpha}{1 - \alpha}}\eta) \right)
\end{equation}
and
\begin{equation}
    \frac{\partial^2 \cE_1}{\partial \alpha^2}(0, \eta) =
    -8\rho \eta f(\eta)^2 (F(\eta) - F(-\eta) - 2f(\eta))
\end{equation}
where $F(x)$ and $f(x)$ are respectively the cdf and pdf of a standard Gaussian
distribution.

\end{lem}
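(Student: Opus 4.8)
The plan is to reduce $\cE_k$ to a single auxiliary function and then exploit the Markov and Hermite structure of the AR(1) kernel. Write $p_\alpha(x,y)=\frac{1}{\sqrt{2\pi(1-\alpha^2)}}\exp\!\big(-\frac{(y-\alpha x)^2}{2(1-\alpha^2)}\big)$ for the one-step transition density of $\tX$, so that $(\tX_t,\tX_{t-1},\dots,\tX_{t-k-1})$ has joint density $f(x_{k+1})\prod_{j=0}^{k}p_\alpha(x_{j+1},x_j)$ with $x_j=\tX_{t-j}$. Since $X_t=(\tX_t-\alpha\tX_{t-1})/\sqrt{1-\alpha^2}$ and only $\tX_t$ is constrained through $X_t$, I would first integrate out $\tX_t$ by conditioning on $\tX_{t-1}=y$, producing
\[
g(y;\alpha):=\E[X_t\,\chi(-\eta<\tX_t<\eta)\mid \tX_{t-1}=y]=f(a(y))-f(b(y)),
\]
with $a(y)=\frac{-\eta-\alpha y}{\sqrt{1-\alpha^2}}$ and $b(y)=\frac{\eta-\alpha y}{\sqrt{1-\alpha^2}}$, so that $\cE_k=2\rho\,\E[g(\tX_{t-1})\chi(\Omega_k',\tX_{t-k-1}\ge\eta)]$, where $\Omega_k'$ drops the $\tX_t$ constraint. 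The elementary identities $g(-y;\alpha)=-g(y;\alpha)$ and $g(y;-\alpha)=-g(y;\alpha)$ show that $g$ is odd in each of $y$ and $\alpha$; in particular $g(\cdot;0)\equiv0$, which already gives $\cE_k(0,\eta)=0$, and a one-line Taylor expansion yields the leading coefficient $\partial_\alpha g(y;0)=-2\eta f(\eta)\,y$.

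For non-positivity I would record the sign of $g$: since $f$ is decreasing in $|\cdot|$ and $|{-\eta-\alpha y}|<|\eta-\alpha y|$ iff $\alpha y<0$, one has $g(y;\alpha)\le0$ for $y\ge0$ when $\alpha\in(0,1)$. I then propagate this through the conditioning on the intermediate states. Writing $(Qh)(x)=\int_{-\eta}^{\eta}p_\alpha(x,y)h(y)\,dy$ for the kernel restricted to the no-trade band, the two facts $p_\alpha(-x,y)=p_\alpha(x,-y)$ and $p_\alpha(x,y)\ge p_\alpha(x,-y)$ for $x,y\ge0$ show that $Q$ preserves the class of odd functions that are $\le0$ on $[0,\eta]$: for such $h$ and $x\ge0$, $(Qh)(x)=\int_0^{\eta}\big(p_\alpha(x,y)-p_\alpha(x,-y)\big)h(y)\,dy\le0$. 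Starting from $h_0=g$ and applying $Q$ a total of $k$ times, then integrating the oldest state over $[\eta,\infty)$ against $f\ge0$, yields $\cE_k\le0$.

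The heart of the argument, and the step I expect to be the main obstacle, is the order of vanishing at $\alpha=0$. Here I would expand every transition kernel by Mehler's formula, $p_\alpha(x,y)=f(y)\sum_{n\ge0}\frac{\alpha^n}{n!}H_n(x)H_n(y)$ with $H_n$ the probabilists' Hermite polynomials, and expand $g(x_1;\alpha)=\sum_{m\ge1}\alpha^m c_m(x_1)$ (each $c_m$ odd, since $g$ is). A monomial of the resulting multiple series carries the power $\alpha^{m+n_1+\cdots+n_k}$ and an integrand $c_m(x_1)H_{n_1}(x_1)\cdots$ that factorizes across the states $x_1,\dots,x_k\in(-\eta,\eta)$ and $x_{k+1}\in[\eta,\infty)$. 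The key observation is parity: integration against $f$ over the symmetric band kills any factor of odd parity, so the $x_1$-integral forces $n_1$ odd (as $c_m$ is odd), and each successive $x_j$-integral ($2\le j\le k$) forces $n_j\equiv n_{j-1}$, hence all $n_j$ odd; the only asymmetric integral is the final one over $[\eta,\infty)$, which imposes no constraint. Thus every surviving term has $n_1+\cdots+n_k\ge k$ and $m\ge1$, so $\cE_k=O(\alpha^{k+1})$, which is exactly the assertion that $\partial_\alpha^j\cE_k(0,\eta)=0$ for $0\le j\le k$. The interchange of summation, integration, and $\alpha$-differentiation (valid for $|\alpha|<1$ by Gaussian tail bounds, giving analyticity of $\cE_k$ in $\alpha$) is routine but should be stated.

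Finally, the two explicit quantities fall out of this machinery. For $\cE_0=2\rho\int_\eta^\infty g(x_1;\alpha)f(x_1)\,dx_1$ I would substitute $g=f(a)-f(b)$, complete the square in the resulting planar Gaussian integrals, and read off the cdf values $F(\sqrt{(1\mp\alpha)/(1\pm\alpha)}\,\eta)$, giving \eqref{k=0 term}. For $\partial_\alpha^2\cE_1(0,\eta)$ the parity analysis shows the $\alpha^2$-coefficient receives a single contribution, namely $m=1,\ n_1=1$, i.e. $c_1(x_1)=-2\eta f(\eta)x_1$ paired with $H_1(x_2)H_1(x_1)=x_1x_2$; the double integral then separates into the elementary moments $\int_\eta^\infty x f(x)\,dx=f(\eta)$ and $\int_{-\eta}^\eta x^2 f(x)\,dx$, which assemble into the displayed closed form.
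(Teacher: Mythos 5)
Your proposal is correct, and it takes a genuinely different route from the paper. The paper works directly on the iterated Gaussian integral: after the rotation $u=\sqrt{1-\alpha^2}\,y+\alpha z$, $v=-\alpha y+\sqrt{1-\alpha^2}\,z$, it splits the region into a rectangle $\Sigma_{0,1}$, where the inner integral cancels exactly by oddness in $u$, and a triangle $\Sigma_{0,2}$ of area $O(\alpha)$; combined with the $O(\alpha)$ bound on the inner $x$-integral this yields $|\cE_1|\leq C(\eta)\alpha^2$, and the case of general $k$ is only asserted to be ``similar.'' Your argument---integrating out the innovation to get $g(y;\alpha)=f\bigl(\tfrac{-\eta-\alpha y}{\sqrt{1-\alpha^2}}\bigr)-f\bigl(\tfrac{\eta-\alpha y}{\sqrt{1-\alpha^2}}\bigr)$, then expanding every transition kernel by Mehler's formula and bookkeeping parities across the band integrals---is more systematic: it delivers non-positivity (via $p_\alpha(x,y)\geq p_\alpha(x,-y)$ for $x,y\geq 0$ and the oddness-preserving kernel $Q$), analyticity in $\alpha$, and the exact order of vanishing $\cE_k=O(\alpha^{k+1})$ uniformly in $k$, all in one stroke; this is a real improvement over the paper's case-by-case geometric cancellation. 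Your treatment of $\cE_0$ reproduces \eqref{k=0 term} exactly.

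One correction to your last step, which is not a flaw in your method but in your claim that the moments ``assemble into the displayed closed form'': they do not, and in fact your computation exposes an algebraic slip in the lemma. The unique surviving contribution $(m,n_1)=(1,1)$ gives the $\alpha^2$-coefficient $2\rho\, f(\eta)\,(-2\eta f(\eta))\int_{-\eta}^{\eta}x^2f(x)\,dx$, and since $\int_{-\eta}^{\eta}x^2f(x)\,dx=F(\eta)-F(-\eta)-2\eta f(\eta)$, doubling yields $\frac{\partial^2\cE_1}{\partial\alpha^2}(0,\eta)=-8\rho\,\eta f(\eta)^2\bigl(F(\eta)-F(-\eta)-2\eta f(\eta)\bigr)$, whereas the lemma displays $-8\rho\,\eta f(\eta)^2\bigl(F(\eta)-F(-\eta)-2f(\eta)\bigr)$; the two agree only at $\eta=1$. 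Your value is the correct one: the paper's own penultimate expression $8\eta f'(\eta)f(\eta)\int_{-2}^{0}\bigl(f(-(w+1)\eta)-f(\eta)\bigr)dw$ evaluates, using $f'(\eta)=-\eta f(\eta)$ and $\int_{-2}^{0}f(-(w+1)\eta)\,dw=\frac{1}{\eta}\bigl(F(\eta)-F(-\eta)\bigr)$, to exactly your expression, so the final simplification in the paper dropped a factor of $\eta$ in the last term (it also dropped the $\rho$). A sanity check confirms this: since $\cE_1\leq 0$, $\cE_1(0,\eta)=0$, and $\partial_\alpha\cE_1(0,\eta)=0$, one must have $\partial^2_\alpha\cE_1(0,\eta)\leq 0$; your form is manifestly non-positive because $F(\eta)-F(-\eta)-2\eta f(\eta)=\int_{-\eta}^{\eta}x^2f(x)\,dx\geq 0$, while the displayed form would be positive for small $\eta$. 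So state your formula as the result and flag that it corrects the display, rather than claiming agreement with it.
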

\begin{proof}
We first prove \eqref{k=0 term}. Recall that
\begin{equation*}
    \tX_t = \alpha \tX_{t-1} + \sqrt{1 - \alpha^2} X_t
\end{equation*}
and $X_t, \tX_{t-1}$ are independent.
The LHS of \eqref{k=0 term} is equal to
\begin{equation}
    \begin{aligned}
        & 2\rho \int_\eta^{+\infty} \int_{-\frac{\alpha y}{\sqrt{1 - \alpha^2}} -
        \frac{\eta}{\sqrt{1 - \alpha^2}}}^{-\frac{\alpha y}{\sqrt{1 - \alpha^2}}
        + \frac{\eta}{\sqrt{1 - \alpha^2}}} x \cdot
        \frac{1}{2\pi}e^{-\frac{x^2 + y^2}{2}} dxdy
    \end{aligned}
\end{equation}
and \eqref{k=0 term} follows from direct computation.
We now prove \eqref{k>0 term} by showing that
\begin{equation}\label{error term 1 in G}
    \cE_k \leq \alpha^{k+1} C(k, \eta)
\end{equation}
for some $C(k, \eta) > 0$ and for $\alpha$ bounded away from 1.
This indicates that the first order partial derivative in $\alpha$ is zero at
$\alpha = 0$. We prove the claim for $k=1$. The proof for larger $k$ follows
from similar arguments.

Express the LHS of \eqref{error term 1 in G} by an iterated integral, where
variables for integration and the random variables correspond as below:
\begin{equation*}
        x \sim X_t ,\
        y \sim X_{t-1} ,\
        z \sim \tX_{t-2}.
\end{equation*}
The integral representation of \eqref{error term 1 in G}
\begin{equation}\label{eq triple int}
    \iiint_\Sigma x \cdot \frac{1}{(\sqrt{2\pi})^3}
    e^{-\frac{x^2 + y^2 + z^2}{2}} dxdydz
    = \iint_{(y,z) \in \Sigma_0} \int_{x \in I(y,z)} xe^{-\frac{x^2}{2}} dx
    \frac{1}{(\sqrt{2\pi})^3} e^{-\frac{y^2 + z^2}{2}} dydz,
\end{equation}
where the regions are
\begin{equation}
    \begin{aligned}
        & \Sigma_0 = \{ (y,z) | z \geq \eta, -\eta \leq \sqrt{1 - \alpha^2}y + \alpha z \leq \eta \} \\
        & I(y,z) = \{-\eta \leq \sqrt{1 - \alpha^2}x + \alpha(\sqrt{1 - \alpha^2}y + \alpha z) \leq \eta\} \\
        & \Sigma = \{(x,y,z) | (y,z) \in \Sigma_0, x \in I(y,z)\}.
    \end{aligned}
\end{equation}
Let us examine the inner integral w.r.t $x$.
The interval $I(y,z)$ is equal to a shifted version of
\begin{equation*}
    I_0 = (-\frac{\eta}{\sqrt{1 - \alpha^2}}, \frac{\eta}{\sqrt{1 - \alpha^2}})
\end{equation*}
which is centered at origin. The shift is equal to
\begin{equation*}
    -\frac{\alpha}{\sqrt{1 - \alpha^2}}\eta \leq
    \frac{1}{\sqrt{1 - \alpha^2}}(\alpha(\sqrt{1 - \alpha^2}y + \alpha z))
    \leq \frac{\alpha}{\sqrt{1 - \alpha^2}}\eta.
\end{equation*}
Note that the $x$-integral on the unshifted interval $I_0$ is equal to 0.
When $\alpha$ is small, cancelation is still valid on a large portion of $I$,
leaving only a subinterval of length $\mathcal{O}(\alpha)$ that contributes to
the integral. \\
Thus, we have
\begin{equation}\label{cancel along x}
    \int_{x \in I(y,z)} xe^{-\frac{x^2}{2}} dx \leq C(\eta) \alpha
\end{equation}
uniformly in $y, z$.
The other $\alpha$ factor results from cancelation in the $y$-$z$ plane.
In fact, if we make the change of variable
\begin{equation*}
    \begin{aligned}
        & u = \sqrt{1 - \alpha^2}y + \alpha z \\
        & v = -\alpha y + \sqrt{1 - \alpha^2} z
    \end{aligned}
\end{equation*}
and abuse the notation by denoting $I(y(u,v), z(u,v))$ by $I(u,v)$ for
simplicity, then we have
\begin{equation}\label{cancel along u}
    \int_{I(u, v)} x e^{-\frac{x^2}{2}}dx =
    -\int_{I(-u, v)} x e^{-\frac{x^2}{2}} dx.
\end{equation}
Also, note that $y^2 + z^2 = u^2 + v^2$. Thus
\begin{equation}
    \frac{1}{(\sqrt{2\pi})^3} e^{-\frac{y^2 + z^2}{2}} =
    \frac{1}{(\sqrt{2\pi})^3} e^{-\frac{u^2 + v^2}{2}}
\end{equation}
is even in $u$ for each fixed $v$.

\begin{figure}
    \centering
    \begin{tikzpicture}
        \begin{axis}[
            axis lines = middle,
            xlabel = \( y \),
            ylabel = \( z \),
            domain = -5:2,
            xmin = -5, xmax = 2,
            ymin = 0, ymax = 6,
            xticklabel=\empty,
            yticklabel=\empty,
            legend style = {at={(1,1.05)}, anchor=south east},
        ]
            \addplot[
                domain=-5:1,
                samples=100,
                thick,
                color=black
            ] {2-x}; 
            \addlegendentry{\(l_1: \sqrt{1-\alpha^2} y + \alpha z = \eta\)}
            \addplot[
                domain=-5:-3,
                samples=100,
                thick,
                color=black
            ] {-2-x}; 
            \addlegendentry{\(l_2: \sqrt{1-\alpha^2} y + \alpha z = -\eta\)}
            \addplot[
                domain=-3:1,
                samples=100,
                thick,
                color=black
            ] {1}; 
            \addplot[
                domain=-3:-1,
                samples=100,
                thick,
                color=black,
            ] {x+4};
            \node[] at (axis cs: -3,3) {$\Sigma_{0,1}$};
            \node[] at (axis cs: -1,1.6) {$\Sigma_{0,2}$};
            \node[] at (axis cs: -4,5.5) {$l_1$};
            \node[] at (axis cs: -4.5,2) {$l_2$};
        \end{axis}
        \draw[->, thick, blue] (-3,2) -- (-2,3) node[midway, above] {$\mathbf{u}$};
        \draw[->, thick, blue] (-3,2) -- (-4,3) node[midway, below] {$\mathbf{v}$};
    \end{tikzpicture}
    \caption{Plot of \( \Sigma_{0,1} \) and \( \Sigma_{0,2} \).}
\end{figure}
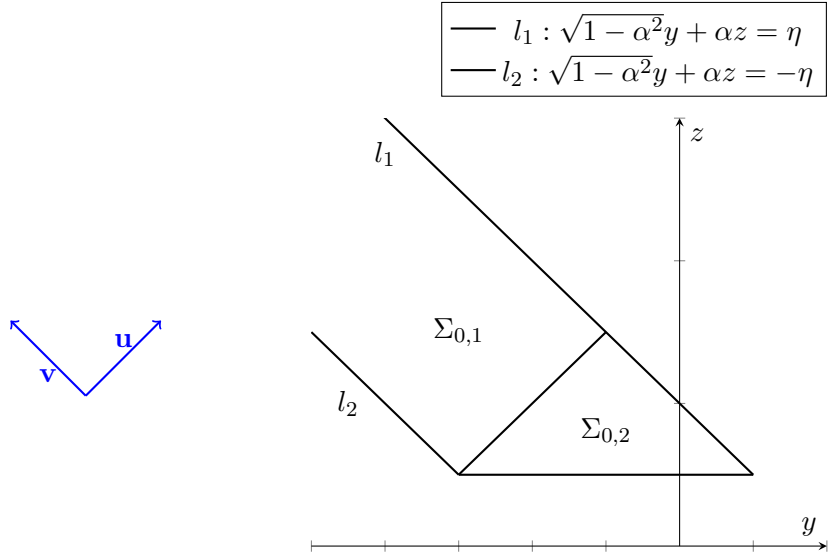
    
With the new variables, the region of the double integral becomes
\begin{equation*}
    \begin{aligned}
        \Sigma_0 = & \{ (u, v) | -\eta < u < \eta,\ v > \sqrt{\frac{1 + \alpha}{1 - \alpha}} \eta \} \\
        \cup & \{ -\eta < u < \eta,\ \sqrt{\frac{1 + \alpha}{1 - \alpha}}\eta -
        (u + \eta) \frac{\alpha}{\sqrt{1 - \alpha^2}} < v <
        \sqrt{\frac{1 + \alpha}{1 - \alpha}}\eta \} \\
        := & \Sigma_{0,1} + \Sigma_{0,2}
    \end{aligned}
\end{equation*}
where $\Sigma_{0,1}, \Sigma_{0,2}$ are rectangular and triangular regions,
respectively.
The line integral of $\int_{I(u,v)} x e^{-\frac{x^2}{2}} dx$ along the $u$
direction cancels in $\Sigma_{0,1}$ due to
\eqref{cancel along u}: for $v \geq \sqrt{\frac{1+\alpha}{1-\alpha}}\eta$,
\begin{equation}
    \int_{-\eta}^{+\eta}\int_{I(u, v)} x e^{-\frac{x^2}{2}}dx du = 0
\end{equation}

The area of $\Sigma_{0,2}$ is bounded by $C(\eta)\alpha$.
Pairing this fact with \eqref{cancel along x}, we get
\begin{equation}
    \iint_{\Sigma_{0,2}} \int_{I(u,v)} x \cdot \frac{1}{(\sqrt{2\pi})^3}
    e^{-\frac{x^2+u^2+v^2}{2}} dx dudv \leq C(\eta)\alpha^2
\end{equation}
This proves the $\alpha^2$ bound. To show the negativity, one only need to
notice that
\begin{equation}
    \iint_{\Sigma_{0,2}}\int_{I(u, v)} x e^{-\frac{x^2}{2}}dx du \leq 0
\end{equation}
due to missing contribution from negative $u$. This concludes the proof for
the $k=1$ term. The argument for the higher order terms is similar.

We need the second order derivatives of $K(\alpha, \eta)$. The
$\frac{\partial^2}{\partial \eta^2}$ and
$\frac{\partial^2}{\partial \alpha \partial \eta}$ ones are relatively easy.
Here we compute the $\frac{\partial^2}{\partial \alpha^2}$. The $K_0$ term and
the $k=0$ term of $\cE$ have explicit expressions. Note that $k\geq 2$
terms of $\cE$ do not contribute to
$\frac{\partial^2 \cE}{\partial \alpha^2}$. It only remains to compute that of
the $k=1$ term, i.e. \eqref{eq triple int}. Note that the integral region for
$x$ (after $u$, $v$ change of coordinates) is
\begin{equation}
    I(u,v) = [-\frac{\alpha}{\sqrt{1-\alpha^2}}u +
    \frac{\eta}{\sqrt{1-\alpha^2}} ,
    \frac{\alpha}{\sqrt{1-\alpha^2}}u + \frac{\eta}{\sqrt{1-\alpha^2}}]
\end{equation}
Integrating in $x$ and recalling that $\Sigma_{0,1}$ does not contribute to the
triple integral, we have
\begin{equation}
\eqref{eq triple int} = \int_{\Sigma_{0,2}} \frac{1}{2\pi}
    e^{-\frac{u^2+v^2}{2}} [f(\frac{\alpha}{\sqrt{1-\alpha^2}}u +
    \frac{\eta}{\sqrt{1-\alpha^2}}) - f(-\frac{\alpha}{\sqrt{1-\alpha^2}}u +
    \frac{\eta}{\sqrt{1-\alpha^2}})] du dv
\end{equation}
We do a Taylor series expansion to the integrand w.r.t. $\alpha$ around
$\alpha=0$ and get
\begin{equation}
    \eqref{eq triple int} = \int_{\Sigma_{0,2}} \frac{1}{2\pi}
    e^{-\frac{u^2+v^2}{2}}f'(\eta)\cdot 2\alpha u + H.O.T dudv
\end{equation}

Rewrite the bounds of $\Sigma_{0,2}$ as
\begin{equation}
    \sqrt{\frac{1+\alpha}{1-\alpha}}\eta -
    \frac{2\alpha\eta}{\sqrt{1-\alpha^2}} \leq v \leq
    \sqrt{\frac{1+\alpha}{1-\alpha}}\eta, \hspace{2em}
    -\frac{\sqrt{1-\alpha^2}}{\alpha}v + \frac{\eta}{\alpha} \leq u \leq \eta
\end{equation}
and make another change of variables
\begin{equation}
    v =\sqrt{\frac{1+\alpha}{1-\alpha}}\eta +
    \frac{\alpha\eta}{\sqrt{1-\alpha^2}} w, \hspace{2em} -2 \leq w \leq 0
\end{equation}

Under the new variables, $\Sigma_{0,2}$ becomes
\begin{equation}
    -2 \leq w \leq 0, \hspace{2em} -(w+1)\eta \leq u \leq \eta
\end{equation}
Then we have
\begin{equation}
    \begin{aligned}
        \eqref{eq triple int} & = 2\alpha f'(-\eta) \int_{-2}^0
        \int_{-(w+1)\eta}^\eta \frac{1}{\sqrt{2\pi}} u e^{-\frac{u^2}{2}} du
        \frac{1}{\sqrt{2\pi}}
        e^{-\frac{1}{2}(\sqrt{\frac{1+\alpha}{1-\alpha}}\eta +
        \frac{\alpha\eta}{\sqrt{1-\alpha^2}}w)^2}
        \frac{\alpha\eta}{\sqrt{1-\alpha^2}}dw + H.O.T \\
    \end{aligned}
\end{equation}
The coefficient of $\alpha^2$ evaluated at $\alpha=0$ gives us (one half) the
second order derivative. Thus
$\frac{\partial^2 \cE_1}{\partial \alpha^2}(0,\eta)$ is equal to
\begin{equation}
    8 \eta f'(\eta) f(\eta) \int_{-2}^0 (f(-(w+1)\eta) - f(\eta)) dw =
    -8 \eta f(\eta)^2 (F(\eta) - F(-\eta) - 2f(\eta))
\end{equation}

\end{proof}

In the next lemma, we compute the gradient of K at $\alpha = 0$.
\begin{lem}
    $\nabla K(0, \eta)$ is given by
    \begin{equation}
        \begin{aligned}
            & \frac{\partial K}{\partial \alpha}(0, \eta) =
            -\frac{2\rho}{\pi} \eta e^{-\eta^2} \\
            & \frac{\partial K}{\partial \eta}(0, \eta) =
            -\frac{2\rho}{\sqrt{2\pi}}\eta e^{-\frac{\eta^2}{2}}
        \end{aligned}
    \end{equation}
\end{lem}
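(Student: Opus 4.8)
The plan is to exploit the decomposition $K = K_0 + \cE$ with $\cE = \sum_{k \geq 0} \cE_k$ established above, together with the explicit formula for $K_0$ and the vanishing results of Lemma \ref{lem K at alpha=0}. Both partial derivatives reduce to differentiating a single closed-form expression, because almost every other piece is constant in the relevant direction at $\alpha = 0$.

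For $\frac{\partial K}{\partial \eta}(0, \eta)$ I would first observe that $\cE(0, \eta) = \sum_k \cE_k(0, \eta) = 0$ for every $\eta$, by the first assertion of Lemma \ref{lem K at alpha=0}. Hence $\cE(0, \cdot)$ is the zero function of $\eta$ and contributes nothing to the $\eta$-derivative at $\alpha = 0$. It then remains only to differentiate $K_0(\alpha, \eta) = \frac{2\rho}{\sqrt{2\pi}} e^{-\eta^2/2}\sqrt{1 - \alpha^2}$ in $\eta$ and set $\alpha = 0$, which yields $-\frac{2\rho}{\sqrt{2\pi}}\eta e^{-\eta^2/2}$ immediately.

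For $\frac{\partial K}{\partial \alpha}(0, \eta)$ I would track which terms survive. Since $K_0$ depends on $\alpha$ only through $\sqrt{1 - \alpha^2}$, whose derivative vanishes at $\alpha = 0$, we get $\frac{\partial K_0}{\partial \alpha}(0, \eta) = 0$; and by Lemma \ref{lem K at alpha=0} we also have $\frac{\partial \cE_k}{\partial \alpha}(0, \eta) = 0$ for every $k \geq 1$. Thus the only surviving contribution is $\frac{\partial \cE_0}{\partial \alpha}(0, \eta)$, which I compute from the explicit formula \eqref{k=0 term}. Writing $a(\alpha) = \sqrt{(1-\alpha)/(1+\alpha)}\,\eta$ and $b(\alpha) = \sqrt{(1+\alpha)/(1-\alpha)}\,\eta$, the product rule kills the $\sqrt{1-\alpha^2}$ prefactor (its derivative vanishes at $0$, while the bracket $F(a) - F(b)$ itself vanishes at $\alpha = 0$ since $a(0) = b(0) = \eta$), leaving $\frac{2\rho}{\sqrt{2\pi}}e^{-\eta^2/2}\big(f(\eta)a'(0) - f(\eta)b'(0)\big)$. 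A short chain-rule computation gives $a'(0) = -\eta$ and $b'(0) = \eta$, so the bracket equals $-2\eta f(\eta)$; substituting $f(\eta) = \frac{1}{\sqrt{2\pi}}e^{-\eta^2/2}$ collapses everything to $-\frac{2\rho}{\pi}\eta e^{-\eta^2}$, as claimed.

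The one genuinely nontrivial step is justifying that $\partial_\alpha$ may be taken term by term through the infinite series $\cE = \sum_k \cE_k$, i.e. that $\frac{\partial \cE}{\partial \alpha}(0, \eta) = \sum_k \frac{\partial \cE_k}{\partial \alpha}(0, \eta) = \frac{\partial \cE_0}{\partial \alpha}(0, \eta)$. My approach is to control the tail uniformly using the bound $\cE_k \leq \alpha^{k+1} C(k, \eta)$ from the preceding lemma: forming the difference quotient $\cE(\alpha, \eta)/\alpha$ (legitimate since $\cE(0, \eta) = 0$) and estimating $\sum_{k \geq 1} |\cE_k|/\alpha \leq \sum_{k \geq 1} \alpha^k C(k, \eta)$, which tends to $0$ as $\alpha \to 0^+$ provided the constants $C(k, \eta)$ grow slowly enough that this power series has positive radius of convergence. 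Verifying that growth condition is the main obstacle, and is exactly the sort of technical estimate deferred to the appendix; granting it, the interchange is immediate and the computation above is complete.
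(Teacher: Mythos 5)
Your proposal is correct and takes essentially the same route as the paper: decompose $K = K_0 + \sum_{k \geq 0} \cE_k$, invoke Lemma \ref{lem K at alpha=0} so that only $\cE_0$ survives in the $\alpha$-derivative and only $K_0$ contributes along the $\eta$-direction, then differentiate the two explicit formulas (your values $a'(0) = -\eta$, $b'(0) = \eta$ and the resulting factor $-2\eta f(\eta)$ are exactly the ``direct computation'' the paper carries out). One correction: the growth control on $C(k,\eta)$ that you invoke to justify term-by-term differentiation of the series is not established anywhere in the paper --- the appendices concern only the survival-time function $h$, not these constants --- and the paper's own proof passes over this interchange silently, merely asserting that the $\cE_k$ with $k \geq 1$ do not contribute; so your flagging of the issue is, if anything, more careful than the paper, but you should not defer its resolution to the appendix, since no such estimate exists there.
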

\begin{proof}
Since the $\cE_k\ (k \geq 1)$ do not contribute to
$\frac{\partial K}{\partial \alpha}(0, \eta)$, we have
    \begin{equation*}
        \begin{aligned}
            \frac{\partial K}{\partial \alpha}(0,\eta)
            & = \left.\frac{\partial}{\partial \alpha}\right|_{\alpha = 0}
            K_0(\alpha, \eta) + \frac{2}{\sqrt{2\pi}}e^{-\frac{\eta^2}{2}}
            \sqrt{1 - \alpha^2}
            \left( F(\sqrt{\frac{1 - \alpha}{1 + \alpha}}\eta) -
            F(\sqrt{\frac{1 + \alpha}{1 - \alpha}}\eta) \right) \\
            & = \left.\frac{\partial}{\partial \alpha}\right|_{\alpha = 0}
            \frac{2}{\sqrt{2\pi}}e^{-\frac{\eta^2}{2}} \sqrt{1 - \alpha^2}
            \left( F(\sqrt{\frac{1 - \alpha}{1 + \alpha}}\eta) -
            F(\sqrt{\frac{1 + \alpha}{1 - \alpha}}\eta) \right) \\
            = & -\frac{2\rho}{\pi} \eta e^{-\eta^2}
        \end{aligned}
    \end{equation*}
    For the $\eta$ direction, we have
    \begin{equation*}
        \frac{\partial K}{\partial \eta}(0, \eta) =
        -\frac{2\rho}{\sqrt{2\pi}}\eta e^{-\frac{\eta^2}{2}}
    \end{equation*}
    This comes from the explicit expression
    \begin{equation*}
        K(0,\eta) = K_0(0,\eta) = \frac{2\rho}{\sqrt{2\pi}} e^{-\frac{\eta^2}{2}}
\end{equation*}

\end{proof}

At the end of this subsection, we compute the second order derivatives of K at
$\alpha=0$. We compute $\frac{\partial^2 K}{\partial \alpha^2}(0,\eta)$ first.
In view of the preceding lemma, it only remains to compute
$\frac{\partial^2 K_0}{\partial \alpha^2}(0,\eta)$ and
$\frac{\partial^2 \cE_0}{\partial \alpha^2}(0,\eta)$.
\begin{equation}
    \frac{\partial^2 K_0}{\partial \alpha^2}(0,\eta) =
    - \frac{2}{\sqrt{2\pi}} e^{-\frac{\eta^2}{2}}
\end{equation}
and
\begin{equation}
    \frac{\partial^2 \cE_0}{\partial \alpha^2}(0,\eta) = 0
\end{equation}
Thus
\begin{equation}
    \frac{\partial^2 K}{\partial \alpha^2}(0,\eta) =
    -2\rho f(\eta) - 8\rho \eta f(\eta)^2 (F(\eta) - F(-\eta) - 2f(\eta))
\end{equation}
The remaining second order derivatives of K are calculated below.
\begin{equation}
    \frac{\partial^2 K}{\partial \alpha \partial \eta}(0,\eta) =
    \frac{2\rho}{\pi}(2\eta^2 - 1)e^{-\eta^2}
\end{equation}
and
\begin{equation}
    \frac{\partial^2 K}{\partial \eta^2}(0,\eta) =
    \frac{2\rho}{\sqrt{2\pi}}(\eta^2 - 1)e^{-\frac{\eta^2}{2}}
\end{equation}

\section{Expected survival times}\label{sec:survival_time}

In this section, we define the function which measures the expected survival
time for an AR(1) process to cross the lower barrier $-\eta$ when starting
from above $+\eta$.
The survival time random variable $\tau_{\alpha, \eta, x}$ is defined as in
\eqref{stopping_time}.
We define $h(x, \alpha, \eta)$ to be the expectation of
$\tau_{\alpha, \eta, x}$:
\begin{equation}
    h(x, \alpha, \eta) := \E[\tau_{\alpha, \eta, x} | x].
\end{equation}

\subsection{The expected stopping time $h(x, \alpha, \eta)$}
\begin{rem}
    The expected stopping time $h(x, \alpha, \eta)$ satisfies the following
    integral equation:
    \begin{equation}\label{integral eq for survival time}
        \begin{aligned}
            & h(x, \alpha, \eta) = 1 + \int_{-\eta}^{+\infty}
              h(\alpha, \eta, y)f(\frac{y - \alpha x}{\sqrt{1 - \alpha^2}})
              \frac{dy}{\sqrt{1 - \alpha^2}} \ for\ x>-\eta \\
            & h(x, \alpha, \eta) = 0\ for\ x\leq -\eta
        \end{aligned}
    \end{equation}
\end{rem}
where, recall, $F(x)$ and $f(x)$ are the cdf and pdf of a standard Gaussian
distribution, respectively. To see why \eqref{integral eq for survival time}
holds, first note that the transition probability of $\tX_t$ is
\begin{equation}
    \begin{aligned}
        P(\tX_1 \in dy | \tX_0 = x)
        &= P(\alpha x + \sqrt{1 - \alpha^2} X_1 \in dy) \\
        &= f(\frac{y - \alpha x}{\sqrt{1 - \alpha^2}})
           \frac{dy}{\sqrt{1 - \alpha^2}},
    \end{aligned}
\end{equation}
where $f$ is the pdf of $\cN(0,1)$, then use the Markov property.

Equation \eqref{integral eq for survival time} has, locally,
a unique solution, and this solution belongs to the class $C^1$:
\begin{lem}\label{thm pde solution}
    For fixed $\eta>0$, let
    $D = \{(x, \alpha) | -\eta < x < +\infty, 0 \leq \alpha < 1, \eta \geq 0 \}$
    \begin{enumerate}
        \item  There exists a unique $h(x, \alpha, \eta)$ defined on D which
               solves \eqref{integral eq for survival time}
        \item  The solution $h$ satisfies the following logarithmic growth
               bound in $x$: for some constants $A_1(\alpha, \eta) > 0$ and
               $A_2(\alpha, \eta) > 0$ depending upon $\alpha$ and $\eta$,
               we have
        \begin{equation}
            |h(x, \alpha, \eta)| \leq
            A_1(\alpha, \eta) + A_2(\alpha, \eta) \log x
        \end{equation}
        \item  The solution $h$ is locally $C^2$ in D:
            \[
                h \in C^2_{\loc}(D)
            \]
    \end{enumerate}
\end{lem}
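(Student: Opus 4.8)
The plan is to recast \eqref{integral eq for survival time} as a fixed-point problem for the affine operator $g \mapsto 1 + Q_{\alpha,\eta}g$, where $Q_{\alpha,\eta}$ is the substochastic integral operator $(Q_{\alpha,\eta}g)(x) = \int_{-\eta}^{\infty} g(y)\,(1-\alpha^2)^{-1/2} f\!\big((y-\alpha x)/\sqrt{1-\alpha^2}\big)\,dy$ obtained by killing $\tX_t$ upon crossing $-\eta$. The natural candidate solution is the expected hitting time itself, $h(x,\alpha,\eta) = \E_x[\tau_{\alpha,\eta,x}] = \sum_{n\ge 0} P_x(\tau_{\alpha,\eta,x} > n) = \sum_{n\ge 0} (Q_{\alpha,\eta}^{n}\mathbf 1)(x)$, and a first-step (Markov) decomposition shows that any finite such sum solves \eqref{integral eq for survival time}. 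I would prove the three assertions in the order (2), then (1), then (3): the growth bound simultaneously yields finiteness and existence, uniqueness follows from a decay estimate, and regularity is obtained by differentiating under the integral after fixing the moving boundary.

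For the growth bound I would use a Foster--Lyapunov (supersolution) argument. Take $V(x) = A_1 + A_2\log(x+\eta+1)$ on $\{x>-\eta\}$ and seek $(I-Q_{\alpha,\eta})V \ge 1$. Because $\tX_1 = \alpha x + \sqrt{1-\alpha^2}Z$ concentrates near $\alpha x$ for large $x$, one has $\E_x[\log(\tX_1+\eta+1)] \approx \log(x+\eta+1) - \log(1/\alpha)$, so that $V - Q_{\alpha,\eta}V \approx A_2 \log(1/\alpha)$ for large $x$; choosing $A_2 \gtrsim 1/\log(1/\alpha)$ makes this $\ge 1$, while for $x$ in a compact collar above $-\eta$ the term $A_1 P_x(\tX_1 \le -\eta)$ can be forced above $1$ by enlarging $A_1$, since the crossing probability is bounded below there. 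Granting $V \ge 0$ and $(I-Q_{\alpha,\eta})V \ge 1$, positivity and monotonicity of $Q_{\alpha,\eta}$ give the telescoping bound $\sum_{n=0}^{N}(Q_{\alpha,\eta}^n \mathbf 1) \le V - Q_{\alpha,\eta}^{N+1}V \le V$, and letting $N\to\infty$ yields both convergence of the series and $0 \le h \le V$, i.e.\ assertion (2) and existence in (1). The degenerate case $\alpha = 0$ is immediate: $\tX_t$ is i.i.d., $\tau$ is geometric, and $h \equiv 1/F(-\eta)$ is constant, consistent with $A_2 = 0$.

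For uniqueness within the class of solutions of at most logarithmic growth, let $g$ be the difference of two such solutions; then $g = Q_{\alpha,\eta}g$, hence $g(x) = (Q_{\alpha,\eta}^n g)(x) = \E_x[g(\tX_n)\,\mathbf 1_{\{\tau_{\alpha,\eta,x} > n\}}]$ for every $n$. By Cauchy--Schwarz, $|g(x)| \le \big(\E_x[g(\tX_n)^2]\big)^{1/2}\, P_x(\tau_{\alpha,\eta,x} > n)^{1/2}$; the first factor is bounded uniformly in $n$ because $\tX_n$ is Gaussian with variance at most one and the logarithmic envelope has uniformly bounded second moment, while the second factor decays geometrically in $n$ (a consequence of the same drift inequality, or of a one-step minorization on a compact set). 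Letting $n \to \infty$ forces $g \equiv 0$, so the log-growth solution is unique.

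For assertion (3) the difficulty is that $\eta$ enters through the lower limit of integration and $h$ is discontinuous at $x=-\eta$ (it jumps from $0$ to a value $\ge 1$), so the Leibniz boundary term does not vanish and one cannot naively differentiate in $\eta$. I would remove this by fixing the boundary: set $s = x+\eta$, $u = y+\eta$ and $\hat h(s,\alpha,\eta) = h(s-\eta,\alpha,\eta)$, turning \eqref{integral eq for survival time} into $\hat h(s,\alpha,\eta) = 1 + \int_0^{\infty} \hat h(u,\alpha,\eta)\,\kappa(s,u,\alpha,\eta)\,du$ on the fixed domain $(0,\infty)$, where $\kappa(s,u,\alpha,\eta) = (1-\alpha^2)^{-1/2} f\!\big(((u-\eta)-\alpha(s-\eta))/\sqrt{1-\alpha^2}\big)$ is jointly smooth in $(s,\alpha,\eta)$ and Gaussian-decaying in $u$. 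All parameter dependence now sits inside a smooth, rapidly decaying integrand, so using the locally uniform log-growth bound from (2) as a dominating function I would differentiate under the integral up to second order in $(s,\alpha,\eta)$: each derivative of $\kappa$ is a polynomial times a Gaussian, which stays integrable against the logarithmic envelope, and $(1-\alpha^2)^{-1/2}$ remains bounded on compact subsets of $\{\alpha<1\}$. This gives $\hat h \in C^2$ jointly, and since $s = x+\eta$ is a smooth change of variables, $h(x,\alpha,\eta) = \hat h(x+\eta,\alpha,\eta) \in C^2_{\loc}(D)$. The main obstacle throughout is the Foster--Lyapunov step: identifying the logarithmic test function and verifying the drift inequality uniformly enough to expose the rate $\log(1/\alpha)$, since finiteness of $h$, the growth bound, the geometric decay used for uniqueness, and the dominating function used for regularity all rest on this single estimate. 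The moving-boundary issue in $\eta$ is a secondary difficulty, but it is dissolved cleanly by the substitution $s = x+\eta$, after which the regularity reduces to routine differentiation under the integral sign.
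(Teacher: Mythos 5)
Your route is genuinely different from the paper's, and for parts (1) and (2) it works and is arguably cleaner. The paper never uses the probabilistic series $\sum_{n}Q_{\alpha,\eta}^{n}\mathbf{1}$: it introduces damped operators $T^{\alpha,\eta,\beta}$ (the kernel multiplied by a cutoff $\phi_\beta(y)\sim e^{-\beta y}$), which \emph{are} contractions, proves a logarithmic-growth a priori bound uniform in $\beta$, and recovers $h$ as a locally uniform limit of the fixed points $h^\beta$ via Arzel\`a--Ascoli and a diagonal argument. Your Foster--Lyapunov supersolution $V=A_1+A_2\log(x+\eta+1)$ with $(I-Q_{\alpha,\eta})V\ge 1$, together with the telescoping bound $\sum_{n\le N}Q_{\alpha,\eta}^n\mathbf{1}\le V-Q_{\alpha,\eta}^{N+1}V\le V$, delivers existence, positivity and the growth bound in one stroke. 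Your uniqueness argument is also essentially sound, with one correction: geometric decay of $P_x(\tau>n)$ does \emph{not} follow from the additive drift inequality (additive drift only gives $\E_x[\tau]<\infty$), but you do not need it --- Markov's inequality gives $P_x(\tau>n)\le \E_x[\tau]/n\to 0$, which is all your Cauchy--Schwarz estimate requires, since the other factor is bounded uniformly in $n$. This is a point on which your write-up is actually more explicit than the paper, whose uniqueness claim rests on the contraction property of the approximants.

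The genuine gap is in part (3). After the substitution $s=x+\eta$ you assert that ``all parameter dependence now sits inside a smooth, rapidly decaying integrand'' and propose to differentiate under the integral sign in $(s,\alpha,\eta)$. That is false for $\alpha$ and $\eta$: the unknown $\hat h(u,\alpha,\eta)$ inside the integral depends on these parameters too, so differentiating the right-hand side of $\hat h=1+\int_0^\infty \hat h(u,\alpha,\eta)\,\kappa(s,u,\alpha,\eta)\,du$ with respect to $\alpha$ presupposes exactly the differentiability of $\hat h$ in $\alpha$ that you are trying to prove --- the argument is circular. Differentiation under the integral legitimately yields smoothness in $s$ (equivalently in $x$) only, since $s$ enters only through $\kappa$. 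Joint $C^2$ regularity in $(\alpha,\eta)$ is what the paper actually needs downstream (Appendix B works with $\partial h/\partial\alpha$, $\partial^2 h/\partial\alpha^2$, $\partial h/\partial\eta$), and obtaining it requires solving the linearized equation $(I-Q_{\alpha,\eta})[\partial_\alpha h]=\int_0^\infty \hat h\,\partial_\alpha\kappa\,du$, i.e.\ inverting $I-Q_{\alpha,\eta}$, which is not a contraction; this is precisely the obstruction the paper circumvents by damping, since each $T^{\alpha,\eta,\beta}$ \emph{is} a contraction (so its fixed point is differentiable in the parameters), and Arzel\`a--Ascoli applied to $\partial h_j/\partial x$, $\partial h_j/\partial\alpha$, $\partial h_j/\partial\eta$ passes this regularity to the limit. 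Within your framework the gap is repairable: your own bound $\bigl|\sum_n Q_{\alpha,\eta}^n g\bigr|\le \|g\|_{L^\infty}\,h$ shows that $(I-Q_{\alpha,\eta})^{-1}$ maps bounded functions to log-growth functions, so you can define the candidate parameter-derivative by this Neumann series and then verify convergence of difference quotients; but that step must actually be carried out, and as written your proof of (3) fails.
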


The technical details of the proof of lemma \ref{thm pde solution} may be
found in Appendix \ref{AppendixA}. Here we note that a straightforward
application of a contraction mapping argument fails due to the unboundedness of
the expected survival time as $x$ approaches infinity. To overcome this, we
define a family of approximating operators to which a contraction mapping
argument may be established via a priori bounds. We show that the a priori
bounds are uniform with respect to the approximation, which then allows us to
use the Arzel\'a-Ascoli theorem and a standard diagonal argument to extract a
uniformly convergent subsequence.

\subsection{The expected survival time $H(\alpha, \eta)$}

We now express the expected survival time $H(\alpha, \eta)$, i.e., the expected
number of steps it takes to cross the lower barrier $-\eta$ conditioned on
having started above the upper barrier $\eta$ (see \eqref{target function}), in
terms of $h$:
\begin{equation}
    \begin{aligned}
        H(\alpha, \eta)
        &= \frac{1}{1 - F(\eta)}\int_{\eta}^{+\infty} h(x, \alpha, \eta) f(x) dx
    \end{aligned}
\end{equation}

In preparation for solving Problem \ref{prob1} using Lagrange multipliers,
we compute the gradient of $H(\alpha, \eta)$ at $\alpha = 0$.

The partial derivatives of $H(\alpha, \eta)$ are given by
\begin{equation}\label{partial E partial alpha eq}
    \frac{\partial H}{\partial \alpha} = \frac{1}{1 - F(\eta)}
    \int_{\eta}^{+\infty} \frac{\partial h}{\partial \alpha}(\alpha, \eta, x)
    f(x) dx
\end{equation}
and
\begin{equation}\label{partial E partial eta eq}
    \begin{aligned}
        \frac{\partial H}{\partial \eta} &=
        \frac{f(\eta)}{(1 - F(\eta))^2} \int_{\eta}^{+\infty} h(x, \alpha, \eta) f(x) dx \\
        &- \frac{1}{1 - F(\eta)} h(\alpha, \eta, \eta+0) f(\eta) \\
        &+ \frac{1}{1 - F(\eta)}\int_{\eta}^{+\infty}
        \frac{\partial h}{\partial \eta}(\alpha, \eta, x)f(x) dx.
    \end{aligned}
\end{equation}
We evaluate \eqref{partial E partial alpha eq} and
\eqref{partial E partial eta eq} on the axis $\alpha = 0$.
\begin{lem}
The expected stopping time $h$ has the following partial derivatives
at $\alpha = 0$:
\begin{equation}
    \begin{aligned}\label{grad_h}
        & h(0, \eta, x) = \frac{1}{F(-\eta)} \\
        & \frac{\partial h}{\partial \alpha}(0, \eta, x)
        = \frac{f(-\eta)^2}{F(-\eta)^2} + \frac{f(-\eta)}{F(-\eta)}x \\
        & \frac{\partial h}{\partial \eta}(0, \eta, x)
        = \frac{f(-\eta)}{F(-\eta)^2}.
    \end{aligned}
\end{equation}
\end{lem}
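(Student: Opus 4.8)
The plan is to work directly with the integral equation \eqref{integral eq for survival time}, treating it as a fixed-point relation, and to differentiate it in $\alpha$ and in $\eta$ before specializing to $\alpha = 0$. The decisive simplification is that at $\alpha = 0$ the transition kernel degenerates: writing $K_\alpha(x,y) = (1-\alpha^2)^{-1/2} f\big((y - \alpha x)(1-\alpha^2)^{-1/2}\big)$, we have $K_0(x,y) = f(y)$, which is independent of $x$. Consequently the integral operator $g \mapsto \int_{-\eta}^{\infty} g(y) f(y)\,dy$ sends every integrable function to a constant, so each equation obtained by differentiating at $\alpha = 0$ can be solved by a one- or two-parameter ansatz whose coefficients are pinned down by a single scalar self-consistency relation. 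The base value is immediate: setting $\alpha = 0$ in \eqref{integral eq for survival time} and inserting a constant ansatz $h(x,0,\eta) \equiv c$ gives $c = 1 + c\,(1 - F(-\eta))$, whence $c = 1/F(-\eta)$. This is the mean geometric first-passage time of an i.i.d.\ standard Gaussian sequence below $-\eta$, as expected from the memorylessness of the process at $\alpha = 0$.

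For $\partial h/\partial \alpha$ I would differentiate \eqref{integral eq for survival time} under the integral sign, obtaining
\begin{equation}
\frac{\partial h}{\partial \alpha}(x,\alpha,\eta) = \int_{-\eta}^{\infty} \frac{\partial h}{\partial \alpha}(y,\alpha,\eta)\, K_\alpha(x,y)\,dy + \int_{-\eta}^{\infty} h(y,\alpha,\eta)\, \frac{\partial K_\alpha}{\partial \alpha}(x,y)\,dy .
\end{equation}
A short computation using $f'(y) = -y f(y)$ gives $\frac{\partial K_\alpha}{\partial \alpha}(x,y)\big|_{\alpha=0} = x\,y\,f(y)$. Substituting $\alpha = 0$, the base value $h(y,0,\eta) = 1/F(-\eta)$, and the elementary moment $\int_{-\eta}^{\infty} y f(y)\,dy = f(-\eta)$, the second integral becomes $x\,f(-\eta)/F(-\eta)$. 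Writing $g(x) := \frac{\partial h}{\partial \alpha}(x,0,\eta)$, the first integral is a constant $A$, so $g(x) = A + \frac{f(-\eta)}{F(-\eta)}\,x$; feeding this back into $A = \int_{-\eta}^{\infty} g(y) f(y)\,dy$ and using the same moment gives $A\,F(-\eta) = f(-\eta)^2/F(-\eta)$, i.e.\ $A = f(-\eta)^2/F(-\eta)^2$. This reproduces the claimed expression.

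For $\partial h/\partial \eta$ the only additional feature is that $\eta$ enters the lower limit of integration as well as the unknown, so Leibniz' rule produces a boundary term. Since the kernel is $\eta$-independent, differentiating \eqref{integral eq for survival time} gives
\begin{equation}
\frac{\partial h}{\partial \eta}(x,\alpha,\eta) = h(-\eta^+,\alpha,\eta)\, K_\alpha(x,-\eta) + \int_{-\eta}^{\infty} \frac{\partial h}{\partial \eta}(y,\alpha,\eta)\, K_\alpha(x,y)\,dy ,
\end{equation}
where $h(-\eta^+,\alpha,\eta)$ denotes the limit from inside the survival region and the positive sign comes from $\frac{d}{d\eta}(-\eta) = -1$. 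At $\alpha = 0$ this boundary term equals $\frac{1}{F(-\eta)} f(-\eta)$, and with the constant ansatz $\frac{\partial h}{\partial \eta}(x,0,\eta) \equiv B$ the self-consistency relation $B = f(-\eta)/F(-\eta) + B\,(1 - F(-\eta))$ yields $B = f(-\eta)/F(-\eta)^2$, as claimed.

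I expect the main obstacle to be rigorously justifying the two interchanges of differentiation and integration, together with the validity of Leibniz' rule at the moving boundary $y = -\eta$, where $h$ jumps to $0$ for $x \le -\eta$. This is exactly where I would invoke Lemma \ref{thm pde solution}: the local $C^2$ regularity of $h$ in $(x,\alpha)$ and the logarithmic growth bound $|h| \le A_1 + A_2 \log x$ supply integrable majorants for differentiation under the integral, since the Gaussian kernel and its $\alpha$-derivative (polynomial times Gaussian) decay fast enough to dominate the $\log x$ growth, while continuity of the inside branch up to the barrier legitimizes using the one-sided value $h(-\eta^+,0,\eta) = 1/F(-\eta)$ in the boundary term. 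Once these analytic points are secured, the remaining work is the elementary moment bookkeeping indicated above.
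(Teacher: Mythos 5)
Your proposal is correct and follows essentially the same route as the paper's Appendix B: differentiate the integral equation in $\alpha$ (respectively $\eta$, picking up the Leibniz boundary term), use the degeneration of the kernel $f_\alpha(x,y)\big|_{\alpha=0}=f(y)$ and the moment $\int_{-\eta}^{\infty} y f(y)\,dy = f(-\eta)$, and close each equation with a scalar self-consistency relation. The only cosmetic difference is the base value $h(x,0,\eta)=1/F(-\eta)$, which the paper obtains probabilistically as the mean of a geometric first-passage time while you obtain it from the constant ansatz in the fixed-point equation; both are legitimate given the uniqueness supplied by Lemma~\ref{thm pde solution}.
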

For the calculations, see Appendix \ref{AppendixB}.
We now compute the gradient of H at $\alpha = 0$.
\begin{lem}\label{lem H partial alpha=0}
$\nabla H$ at $\alpha = 0$ is given by
\begin{equation}
    \begin{aligned}
        \frac{\partial H}{\partial \alpha}(0, \eta) &=
        2 \frac{f(-\eta)^2}{F(-\eta)^2} \\
        \frac{\partial H}{\partial \eta}(0, \eta) &=
        \frac{f(-\eta)}{F(-\eta)^2}.
    \end{aligned}
\end{equation}
\end{lem}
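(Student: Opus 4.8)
The plan is to prove the lemma by direct substitution, feeding the closed-form $\alpha = 0$ values of $h$ and its partials from \eqref{grad_h} into the integral formulas \eqref{partial E partial alpha eq} and \eqref{partial E partial eta eq}. The only auxiliary facts needed are the Gaussian symmetries $1 - F(\eta) = F(-\eta)$ and $f(\eta) = f(-\eta)$, together with the two elementary tail integrals $\int_\eta^{+\infty} f(x)\,dx = F(-\eta)$ and $\int_\eta^{+\infty} x f(x)\,dx = f(\eta)$. The legitimacy of the differentiation under the integral sign that produced \eqref{partial E partial alpha eq} and \eqref{partial E partial eta eq} is underwritten by the logarithmic growth bound of Lemma \ref{thm pde solution}, so I would take those two formulas as given and simply evaluate them on the axis $\alpha = 0$.

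For $\frac{\partial H}{\partial \alpha}(0,\eta)$, I substitute the linear-in-$x$ expression $\frac{\partial h}{\partial \alpha}(0,\eta,x) = \frac{f(-\eta)^2}{F(-\eta)^2} + \frac{f(-\eta)}{F(-\eta)} x$ into \eqref{partial E partial alpha eq}. The constant part integrates against $f$ over $(\eta, \infty)$ to give $F(-\eta)$, while the linear part integrates to the first moment $f(\eta) = f(-\eta)$; both surviving contributions then reduce to $\frac{f(-\eta)^2}{F(-\eta)}$, and after the prefactor $\frac{1}{F(-\eta)}$ they combine to the claimed $\frac{2f(-\eta)^2}{F(-\eta)^2}$. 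The factor of two is precisely the sum of these two equal contributions, one from the constant term and one from the first moment.

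For $\frac{\partial H}{\partial \eta}(0,\eta)$, the decisive observation is that at $\alpha = 0$ the function $h(0,\eta,\cdot) \equiv \frac{1}{F(-\eta)}$ is constant in $x$. Plugging this into the first (normalization-derivative) term of \eqref{partial E partial eta eq} yields $\frac{f(\eta)}{F(-\eta)^2}$, while the boundary term evaluates the same constant at $x = \eta + 0$ and contributes $-\frac{f(\eta)}{F(-\eta)^2}$; these cancel identically. What remains is the third term, into which I substitute the constant $\frac{\partial h}{\partial \eta}(0,\eta,x) = \frac{f(-\eta)}{F(-\eta)^2}$ and integrate $f$ over $(\eta, \infty)$ to recover $\frac{f(-\eta)}{F(-\eta)^2}$, matching the claim.

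Since every input is an explicit closed form, there is essentially no analytic obstacle remaining; the one point requiring care is the exact cancellation in the $\eta$-derivative, which hinges on recognizing that the one-sided boundary value $h(0,\eta,\eta+0)$ coincides with the $x$-independent value $\frac{1}{F(-\eta)}$ appearing in the integrand. I would flag this cancellation explicitly, since it is the structural reason the gradient simplifies and is what ultimately drives the Lagrange-multiplier comparison in the proof of Theorem \ref{thm opt}.
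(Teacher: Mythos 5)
Your proposal is correct and follows essentially the same route as the paper: both substitute the closed-form values of $h$, $\frac{\partial h}{\partial \alpha}$, and $\frac{\partial h}{\partial \eta}$ at $\alpha=0$ from \eqref{grad_h} into \eqref{partial E partial alpha eq} and \eqref{partial E partial eta eq}, use $1-F(\eta)=F(-\eta)$ and the Gaussian tail integrals, and observe the cancellation of the normalization-derivative and boundary terms in the $\eta$-direction. Your explicit flagging of that cancellation is a nice touch of exposition, but it is the same computation the paper performs.
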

\begin{proof}
Using \eqref{grad_h} in \eqref{partial E partial alpha eq}
and \eqref{partial E partial eta eq}, we obtain
\begin{equation}\label{eq partial H partial alpha}
    \begin{aligned}
        \frac{\partial H}{\partial \alpha}(0, \eta) & =
        \frac{1}{1 - F(\eta)} \int_{\eta}^{+\infty}
        \frac{\partial h}{\partial \alpha} (0, \eta, x) f(x) dx \\
        & = \frac{1}{1 - F(\eta)} \int_{\eta}^{+\infty}
        \left( \frac{f(-\eta)^2}{F(-\eta)^2} + \frac{f(-\eta)}{F(-\eta)}x \right) f(x) dx \\
        & = 2\frac{f(-\eta)^2}{F(-\eta)^2}
    \end{aligned}
\end{equation}
and 
\begin{equation}\label{eq partial H partial eta}
    \begin{aligned}
        \frac{\partial H}{\partial \eta}(0, \eta) & =
        \frac{f(\eta)}{(1 - F(\eta))^2} \int_{\eta}^{+\infty} h(x, 0, \eta) f(x) dx \\
        &- \frac{1}{1 - F(\eta)} h(\eta+0, 0, \eta) f(\eta) \\
        &+ \frac{1}{1 - F(\eta)}\int_{\eta}^{+\infty}
        \frac{\partial h}{\partial \eta}(x, 0, \eta)f(x) dx \\
        & = \frac{f(\eta)}{(1 - F(\eta))^2} \frac{1}{F(-\eta)} (1 - F(\eta)) -
        \frac{1}{1 - F(\eta)} \frac{1}{F(-\eta)} f(\eta) \\
        & + \frac{1}{1 - F(\eta)} \frac{f(-\eta)}{F(-\eta)^2} (1 - F(\eta)) \\
        & = \frac{f(-\eta)}{F(-\eta)^2}.
    \end{aligned}
\end{equation}
\end{proof}

At the end of the subsection, we compute the second order derivatives of $H$,
which are required for Hessian test in the next section.
\begin{lem}\label{lem hessian H}
    The Hessian matrix of H at $\alpha = 0$ is given by
    \begin{enumerate}
        \item $\frac{\partial^2 H}{\partial \alpha^2}(0,\eta) =
            \frac{4f(\eta)}{1 - F(\eta)}\left[ \frac{f(-\eta)^3}{F(-\eta)^2} -
            \frac{\eta f(-\eta)^2}{F(-\eta)} + \frac{f(-\eta)}{F(-\eta)} - f(-\eta) \right] $
        \item $\frac{\partial^2 H}{\partial \alpha \partial \eta}(0,\eta) =
            \frac{4f(-\eta)^2}{F(-\eta)^3} (-\eta F(-\eta) + f(-\eta)) $
        \item $\frac{\partial^2 H}{\partial \eta^2}(0,\eta) =
            \frac{f(-\eta)}{F(-\eta)^3} ( -\eta F(-\eta) + 2f(-\eta) ) $
    \end{enumerate}
\end{lem}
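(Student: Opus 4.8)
The plan is to obtain all three Hessian entries by differentiating the two first-order formulas \eqref{partial E partial alpha eq} and \eqref{partial E partial eta eq} one more time and then restricting to the axis $\alpha = 0$. Since $H = \frac{1}{1-F(\eta)}\int_\eta^{+\infty} h(x,\alpha,\eta) f(x)\,dx$, every derivative of $H$ reduces, via Leibniz's rule, to an integral of a derivative of $h$ against the Gaussian density $f$, plus boundary contributions from the lower limit $x=\eta$ and from differentiating the prefactor $1/(1-F(\eta))$. Concretely, $\partial_\alpha^2 H(0,\eta)$ needs only $\partial_\alpha^2 h(0,\eta,x)$; $\partial_\alpha\partial_\eta H(0,\eta)$ is gotten by differentiating \eqref{partial E partial alpha eq} in $\eta$ and needs $\partial_\alpha\partial_\eta h(0,\eta,x)$ together with the boundary value $\partial_\alpha h(0,\eta,\eta)$ already recorded in \eqref{grad_h}; and $\partial_\eta^2 H(0,\eta)$ comes from differentiating \eqref{partial E partial eta eq} and needs $\partial_\eta^2 h(0,\eta,x)$ along with $\partial_\eta h(0,\eta,\eta)$ and $h(0,\eta,\eta)$. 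The regularity and growth bounds of Lemma \ref{thm pde solution} justify all differentiations under the integral sign. Thus the task is reduced to finding the three second partials of $h$ along $\alpha=0$.

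Two of these are immediate from the closed forms already available. Since the iid case gives $h(x,0,\eta)=1/F(-\eta)$, constant in $x$, and \eqref{grad_h} gives $\partial_\alpha h(0,\eta,x)=f(-\eta)^2/F(-\eta)^2+(f(-\eta)/F(-\eta))x$, I would obtain $\partial_\eta^2 h(0,\eta,x)$ by differentiating $1/F(-\eta)$ twice in $\eta$, and $\partial_\alpha\partial_\eta h(0,\eta,x)$ by differentiating the expression for $\partial_\alpha h(0,\eta,x)$ once in $\eta$; both are elementary. The genuinely nontrivial object is $\partial_\alpha^2 h(0,\eta,x)$, which I would extract from the integral equation \eqref{integral eq for survival time} itself.

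Writing the transition kernel as $p_\alpha(x,y)=(1-\alpha^2)^{-1/2} f\big((y-\alpha x)/\sqrt{1-\alpha^2}\big)$, I differentiate \eqref{integral eq for survival time} twice in $\alpha$ and set $\alpha=0$. The product rule yields the self-consistent relation $\partial_\alpha^2 h(x,0,\eta) = \int_{-\eta}^{+\infty}\partial_\alpha^2 h(y,0,\eta) f(y)\,dy + 2\int_{-\eta}^{+\infty}\partial_\alpha h(y,0,\eta)\,(\partial_\alpha p_\alpha)|_0\,dy + \int_{-\eta}^{+\infty} h(y,0,\eta)\,(\partial_\alpha^2 p_\alpha)|_0\,dy$, where the kernel's Taylor coefficients at $\alpha=0$ are $(\partial_\alpha p_\alpha)|_0 = xy\,f(y)$ and $(\partial_\alpha^2 p_\alpha)|_0 = (1-x^2)(1-y^2)f(y)$. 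The crucial simplification is that at $\alpha=0$ the kernel $p_0(x,y)=f(y)$ is independent of $x$, so the first term is constant in $x$ and the inhomogeneous terms are explicit low-degree polynomials in $x$ with coefficients given by the Gaussian moments $\int_{-\eta}^{+\infty} y^k f(y)\,dy$ for $k=0,1,2$. Hence $\partial_\alpha^2 h(0,\eta,x)$ is a quadratic polynomial in $x$ whose unknown constant term is pinned down by taking the zeroth moment of the relation against $f$ over $[-\eta,+\infty)$ and solving the resulting scalar equation, exactly as $\partial_\alpha h(0,\eta,x)$ was determined in Appendix \ref{AppendixB}.

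With the three second partials of $h$ in hand, I would substitute them into the differentiated forms of \eqref{partial E partial alpha eq} and \eqref{partial E partial eta eq}, evaluate the outer integrals $\int_\eta^{+\infty} x^k f(x)\,dx$, and collect terms to reach the stated expressions. I expect the $\partial_\alpha^2 h$ step to be the main obstacle: it requires the second-order kernel expansion above and, more delicately, careful bookkeeping of the jump of $h$ across the barrier (the one-sided values $h(0,\eta,\eta+0)$, $\partial_\alpha h(0,\eta,\eta)$, and the like), since $h(\cdot,0,\eta)$ equals $1/F(-\eta)$ just above $-\eta$ but vanishes at and below it; this one-sided limit is precisely what makes the boundary terms in the $\eta$-derivatives nonzero. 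The entry $\partial_\eta^2 H$ accumulates the most such boundary terms, but each is explicit, so once the moment bookkeeping is complete the remaining work is routine algebra, simplified throughout by $f(-\eta)=f(\eta)$ and $1-F(\eta)=F(-\eta)$.
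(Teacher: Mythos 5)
Your proposal is correct and follows essentially the same route as the paper: entry (1) is reduced to $\frac{1}{1-F(\eta)}\int_\eta^{+\infty}\frac{\partial^2 h}{\partial\alpha^2}(0,\eta,x)f(x)\,dx$ with $\frac{\partial^2 h}{\partial\alpha^2}$ extracted from the twice-differentiated integral equation exactly as in Appendix~\ref{AppendixB} (your kernel coefficients agree with Lemma~\ref{lem compute f alpha}), while entries (2) and (3) come from differentiating the first-order formulas in $\eta$. The only cosmetic difference is that for (2) and (3) you apply Leibniz's rule to the integral representations and then substitute the closed forms \eqref{grad_h}, whereas the paper differentiates the already-evaluated expressions $2f(-\eta)^2/F(-\eta)^2$ and $f(-\eta)/F(-\eta)^2$ directly; these computations are equivalent and yield the same results.
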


\begin{proof}
    (2) and (3) follow from differentiating \eqref{eq partial H partial alpha}
    and \eqref{eq partial H partial eta} respectively. To compute (1), note
    that
    \begin{equation*}
        \frac{\partial^2 H}{\partial \alpha^2}(0,\eta) =
        \frac{1}{1-F(\eta)} \int_{\eta}^{+\infty}
        \frac{\partial^2 h}{\partial \alpha^2}(0,\eta,x) f(x) dx
    \end{equation*}
    Using Lemma \ref{lem derivatives of h}, we obtain (1).
\end{proof}

\section{Solution to the optimization problem}

In this section we utilize the results of the previous sections to solve
Optimization Problem \ref{prob1} with Lagrange multipliers.

We recall the constraint on the correlation function $K(\alpha ,\eta)$ and
impose the level constraint
\begin{equation}
    K(\alpha, \eta) = c.
\end{equation}
The Lagrange multiplier theorem indicates that a constraint maximum is
achieved at a critical point, where
\begin{equation}\label{eq lagrange multiplier}
    \nabla_{\alpha, \eta} H = \lambda \nabla_{\alpha, \eta} K
\end{equation}
for some $\lambda$.

Thus
\begin{equation}
    \frac{\partial K / \partial \eta}{\partial K / \partial \alpha}(0, \eta)
    = \sqrt{\frac{\pi}{2}} e^{\frac{\eta^2}{2}}
\end{equation}

Using the results of preceding sections, we get
\begin{equation}
    \nabla H(0,\eta) = -\frac{1}{2\rho \eta F(-\eta)^2} \nabla K(0,\eta)
\end{equation}
This shows that Lagrange multiplier equation holds at $\alpha = 0$ with
$\lambda = -\frac{1}{2\rho \eta F(-\eta)^2}$.
To ensure this is a constraint local maximum, we perform the following Hessian
test.
Denote the tangent vector of the level curve $K(\alpha, \eta) = c$ at
$\alpha = 0$ by
$v(\eta)=(\sqrt{\frac{\pi}{2}} e^{\frac{\eta^2}{2}} , -1) = (\frac{1}{2f(\eta)} , -1)$.
Suppose the level curve $K(\alpha, \eta) = c$ is parametrized by
$(\alpha(t), \eta(t))$ with $(\alpha(0), \eta(0)) = (0, \eta)$.
$H$ constrained on $K=c$ obtains maximum at $\alpha = 0$ if
\begin{equation}
    \begin{aligned}
        & \left.\frac{\partial^2 H(\alpha(t), \eta(t))}{\partial t^2}
        \right|_{t=0} < 0 \\
        = & \frac{\partial^2 H}{\partial \alpha^2} \alpha'(0)^2 + 2
        \frac{\partial^2 H}{\partial \alpha \partial \eta} \alpha'(0) \eta'(0)
        + \frac{\partial^2 H}{\partial \eta^2} \eta'(0)^2 +
        \frac{\partial H}{\partial \alpha} \alpha''(0) +
        \frac{\partial H}{\partial \eta} \eta''(0) \\
        = & D^2H(v(0),v(0)) + \nabla H \cdot (v'(0), v'(0)) \\
        = & D^2H(v(0),v(0)) + \lambda \nabla K \cdot (v'(0), v'(0))
    \end{aligned}
\end{equation}
where we denote $D^2H(v,v)$ to be the quadratic form arising from the Hessian
matrix of $H$ acting on vector $v$. $v(t) = (\alpha'(t), \eta'(t))$.
Recall $\lambda$ was defined in \eqref{eq lagrange multiplier}.
Note that $K(\alpha(t), \eta(t)) = c$.
Differentiating in $t$ gives us for any $t$,
\begin{equation}
    \nabla K \cdot (v'(t), v'(t)) = - D^2K(v(t), v(t))
\end{equation}
Thus
\begin{equation}\label{eq hessian test}
    \frac{\partial^2 H(\alpha(t), \eta(t))}{\partial t^2} =
    D^2H(v(0),v(0)) + \lambda D^2K(v(0),v(0))
\end{equation}

Plugging the Hessians of H and K as well as
$v(0) = (\frac{1}{2f(\eta)}, -1)$ into \eqref{eq hessian test}, we get
\begin{equation}
    \begin{aligned}
        \eqref{eq hessian test} 
        & = -\frac{1}{4\eta f(-\eta) F^2(-\eta)} -
        \frac{6\eta f(-\eta)}{F(-\eta)^2} - \frac{f(-\eta)^2}{F(-\eta)^3} +
        \frac{f(-\eta)}{F(-\eta)^2} + \frac{1}{F(-\eta)}
    \end{aligned}
\end{equation}

The expression above is negative for all $\eta > 0$.
We conclude this section with the following corollary.
\begin{cor}
The expected survival time $H(\alpha, \eta)$ obtains a constrained local
maximum at $\alpha = 0$ under the constraint $K(\alpha, \eta) = c$.
\end{cor}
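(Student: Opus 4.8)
The plan is to treat the corollary as the conclusion of the Lagrange-multiplier analysis assembled above, in two stages: a first-order stage exhibiting $(\alpha,\eta)=(0,\eta_0)$ as a constrained critical point, and a second-order stage promoting it to a constrained maximum. First I would verify the first-order condition. Using the gradient $\nabla K(0,\eta)$ together with the gradient $\nabla H(0,\eta)$ from Lemma~\ref{lem H partial alpha=0}, I would check by direct substitution that the two gradients are proportional, which realizes $\alpha=0$ as a critical point of $H$ restricted to $\{K=c\}$ and pins down the multiplier $\lambda=-\frac{1}{2\rho\eta F(-\eta)^2}$ via \eqref{eq lagrange multiplier}. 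This step is routine once the gradient formulas are in hand.

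The substance is the second-order test. I would parametrize the level curve $K=c$ near the critical point as $(\alpha(t),\eta(t))$ with $(\alpha(0),\eta(0))=(0,\eta)$ and tangent vector $v(0)=(\tfrac{1}{2f(\eta)},-1)$, then compute $\frac{d^2}{dt^2}H(\alpha(t),\eta(t))\big|_{t=0}$ by the chain rule. This produces the Hessian quadratic form $D^2H(v,v)$ plus a first-order term $\nabla H\cdot(\alpha''(0),\eta''(0))$ carrying the curvature of the constraint. Differentiating the identity $K(\alpha(t),\eta(t))=c$ twice lets me eliminate $(\alpha''(0),\eta''(0))$ in favor of $D^2K(v,v)$, and combining with the Lagrange relation collapses the whole second derivative to $D^2H(v,v)+\lambda D^2K(v,v)$, which is exactly \eqref{eq hessian test}. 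Local maximality is then equivalent to the strict negativity of this scalar.

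To evaluate it I would insert the Hessian entries of $H$ from Lemma~\ref{lem hessian H}, the second derivatives of $K$ computed at the end of \S\ref{sec:corr}, the value of $\lambda$, and the tangent vector $v(0)$, and collect terms to obtain the scalar expression displayed just before the corollary. Writing $f=f(\eta)=f(-\eta)$ and $F=F(-\eta)$ and factoring out the positive $F^{-2}$, the question reduces to showing that the bracketed quantity $-\frac{1}{4\eta f}-6\eta f-\frac{f^2}{F}+f+F$ is negative for every $\eta>0$.

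The main obstacle is precisely this last inequality, which mixes a polynomial factor, the pdf, and the Gaussian tail $F(-\eta)$. The only positive contributions are $f+F$, which stay bounded (indeed $f+F<0.9$), whereas $-\frac{1}{4\eta f}$ diverges both as $\eta\to0^+$ and, since $1/f$ outgrows $\eta$, as $\eta\to\infty$; the remaining two terms are negative. I would therefore reduce to the clean estimate $4\eta f(f+F)<1$ and control it with the Mills-ratio bound $F(-\eta)\le f(\eta)/\eta$, which gives $4\eta f(f+F)\le 4f^2(\eta+1)=\frac{2}{\pi}(\eta+1)e^{-\eta^2}$; a one-variable calculus bound shows this last quantity stays below $1$ for all $\eta>0$, while the dropped terms $-6\eta f$ and $-f^2/F$ only strengthen the conclusion. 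Establishing and assembling these elementary Gaussian estimates is the one genuinely delicate part; everything else is bookkeeping on derivatives already computed.
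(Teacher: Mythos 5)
Your proposal follows essentially the same route as the paper: the same Lagrange multiplier $\lambda=-\frac{1}{2\rho\eta F(-\eta)^2}$, the same tangent vector $v(0)=(\tfrac{1}{2f(\eta)},-1)$, the same reduction of the second-order test to $D^2H(v,v)+\lambda D^2K(v,v)$ via differentiating the constraint twice, and the same resulting scalar expression. The only divergence is at the very last step, where the paper merely asserts that this expression is negative for all $\eta>0$, while you actually prove it --- your reduction to $4\eta f(f+F)<1$ combined with the Mills-ratio bound $F(-\eta)\le f(\eta)/\eta$, giving $4\eta f(f+F)\le \tfrac{2}{\pi}(\eta+1)e^{-\eta^2}<1$, is correct (the maximum of the right-hand side is about $0.76$) and supplies a justification the paper omits.
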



Our result indicates that if we have a raw signal without autocorrelation that
is correlated with the target (with only one nonvanishing cross-correlation
term), attempting to smooth it with an EMA filter will only make the portfolio
less profitable in the sense that we cannot hope to reduce the expected
turnover of the portfolio while keeping the return at a fixed level by
smoothing the raw signal by a small amount.

The result in this paper does not apply to a large amount of smoothing,
although the authors strongly suspect that the raw signal will still
outperform smootheded signals by any amount.
We derive the following estimate in the absence of a global result.
\begin{cor}
Suppose $X_t$ is identically independently distributed Gaussian signals and
$Y_t$ is correlated with $X_t$ by $\rho$, as in \eqref{y_t}. For any
$\alpha > 0$, we may apply EMA to $X_t$ as in (\ref{defn EMA}), which results
in AR(1) signal $\Tilde{X}_t$. If we keep the threshold $\eta$ fixed, the
target-portfolio correlation with the raw signal $X_t$ outperforms that of the
smoothed signal $\Tilde{X_t}$, and the ratio of improvement
(compared to the correlation of the original AR(1) signal) is greater than
\begin{equation}
\mathcal{R}(\alpha, \eta) = \frac{1 - \sqrt{1-\alpha^2} + \sqrt{1-\alpha^2}
    F(\sqrt{\frac{1+\alpha}{1-\alpha}} \eta) -
    \sqrt{1-\alpha^2} F(\sqrt{\frac{1-\alpha}{1+\alpha}} \eta)}{\sqrt{1-\alpha^2}
    - \sqrt{1-\alpha^2} F(\sqrt{\frac{1+\alpha}{1-\alpha}} \eta)
    + \sqrt{1-\alpha^2} F(\sqrt{\frac{1-\alpha}{1+\alpha}} \eta)}
\end{equation}
\end{cor}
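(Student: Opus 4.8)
The plan is to reduce the claim to the explicit formulas for $K_0$ and $\cE_0$ already recorded in Lemma \ref{lem K at alpha=0}, so that the corollary becomes essentially an algebraic identity together with a monotonicity observation. Recall the decomposition $K(\alpha,\eta) = K_0(\alpha,\eta) + \cE(\alpha,\eta)$ with $\cE = \sum_{k\ge 0}\cE_k$, and that Lemma \ref{lem K at alpha=0} asserts that every $\cE_k$ is non-positive. The first step is therefore to discard the tail: since $\cE_k \le 0$ for $k \ge 1$, we have $\cE(\alpha,\eta) \le \cE_0(\alpha,\eta)$, and hence the upper bound $K(\alpha,\eta) \le \tilde K(\alpha,\eta) := K_0(\alpha,\eta) + \cE_0(\alpha,\eta)$, a quantity for which we have a closed form.

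The second step is to translate this upper bound on the correlation of the smoothed signal into a lower bound on the improvement ratio. Writing $M := K(0,\eta) = K_0(0,\eta) = \frac{2\rho}{\sqrt{2\pi}}e^{-\eta^2/2}$ (using $\cE_k(0,\eta)=0$), the improvement ratio is $\frac{K(0,\eta)-K(\alpha,\eta)}{K(\alpha,\eta)} = \frac{M}{K(\alpha,\eta)} - 1$. This is a decreasing function of the denominator $K(\alpha,\eta)$, so replacing $K(\alpha,\eta)$ by its upper bound $\tilde K(\alpha,\eta)$ produces a valid lower bound, \emph{provided} $0 < K(\alpha,\eta) \le \tilde K(\alpha,\eta)$. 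I would verify $\tilde K>0$ directly from its closed form, since the bracketed factor $1 + F(\sqrt{\tfrac{1-\alpha}{1+\alpha}}\eta) - F(\sqrt{\tfrac{1+\alpha}{1-\alpha}}\eta)$ lies in $(0,1)$ (it is one minus a difference of CDF values at $\sqrt{\tfrac{1+\alpha}{1-\alpha}}\eta > \sqrt{\tfrac{1-\alpha}{1+\alpha}}\eta$); the same computation gives $\tilde K < M$, which already establishes the qualitative claim that the raw signal strictly outperforms the smoothed one.

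The final step is a direct substitution. Plugging $K_0(\alpha,\eta) = M\sqrt{1-\alpha^2}$ and $\cE_0(\alpha,\eta) = M\sqrt{1-\alpha^2}\big(F(\sqrt{\tfrac{1-\alpha}{1+\alpha}}\eta) - F(\sqrt{\tfrac{1+\alpha}{1-\alpha}}\eta)\big)$ into $\tilde K$, factoring out $M$, and forming $\frac{M - \tilde K}{\tilde K}$, the common factor $M = \frac{2\rho}{\sqrt{2\pi}}e^{-\eta^2/2}$ cancels between numerator and denominator. This cancellation is precisely why $\mathcal{R}(\alpha,\eta)$ carries no dependence on $\rho$ or on $e^{-\eta^2/2}$, and matching the surviving terms shows $\frac{M-\tilde K}{\tilde K} = \mathcal{R}(\alpha,\eta)$ exactly, so the improvement ratio exceeds $\mathcal{R}(\alpha,\eta)$.

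Since the genuinely analytic content — the non-positivity of the $\cE_k$ and the closed form for $\cE_0$ — is already supplied by Lemma \ref{lem K at alpha=0}, I do not anticipate a serious obstacle. The one point requiring care is the monotonicity and sign bookkeeping in the second step: the inequality $\frac{M}{K(\alpha,\eta)} \ge \frac{M}{\tilde K(\alpha,\eta)}$ is valid only when both denominators are positive with $K(\alpha,\eta) \le \tilde K(\alpha,\eta)$, so I would make the positivity of $K(\alpha,\eta)$ explicit rather than leave it implicit, and double-check that discarding the $k\ge 1$ tail moves the bound in the correct direction, namely that weakening $K(\alpha,\eta)$ upward strengthens the lower bound on the ratio.
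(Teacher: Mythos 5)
Your proposal is correct and is essentially the paper's own (implicit) derivation: the corollary has no written proof in the paper, but the stated formula $\mathcal{R}(\alpha,\eta)$ is exactly $\frac{K(0,\eta)-\tilde K(\alpha,\eta)}{\tilde K(\alpha,\eta)}$ with $\tilde K = K_0 + \cE_0$, obtained precisely by discarding the non-positive tail $\sum_{k\geq 1}\cE_k$ via Lemma \ref{lem K at alpha=0} and substituting the closed forms of $K_0$ and $\cE_0$, which is what you do. The one loose end you rightly flag, positivity of $K(\alpha,\eta)$, closes easily: conditional on the past, $W_t$ is a non-decreasing $\{-1,1\}$-valued function of $X_t$, so $\E[W_t X_t\mid \text{past}]\geq 0$ (indeed $>0$ since the switching threshold is finite), hence $K(\alpha,\eta)>0$ and the monotonicity step in your second paragraph is valid.
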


\begin{figure}[h]
    \centering
    \includegraphics[width=0.5\textwidth]{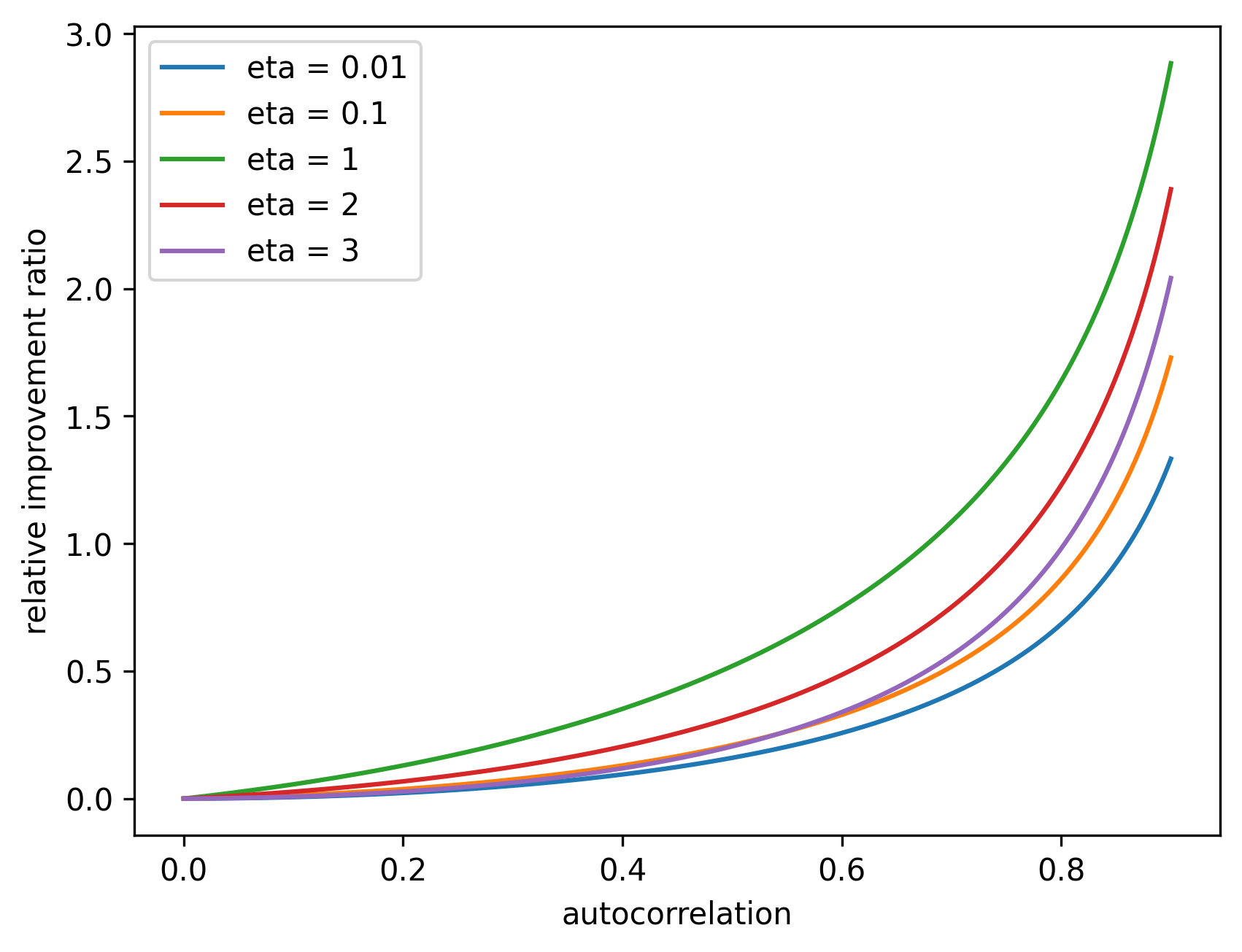}
    \caption{Improvement of signal target correlation by removing autocorrelation}
\end{figure}

\section{Directions for future investigation} \label{sec:future dir}
In the final part of the paper, we make a few comments on the directions for further research.

1. \emph{Multi-step signal target correlation}
In reality, the target is often correlated not only with the current signal,
but also with some lags of the current signal (some past values of the signal).
The authors are currently investigating the problem where we formulate it in
the Markov Decision Process framework and hope to derive a closed form optimal
policy depending on the transaction cost. Upon initial analysis, the form of
the optimal policy depends largely on the relative magnitude of the transaction
cost and signal strength.

2. \emph{Generalized portfolio model}
In this paper, we imposed several restrictions on the portfolio make-up, for
example the single asset condition and the two-state restriction. A natural
generalization is to allow the portfolio manager to hold any amount of shares
inside a continuous bounded interval, and to allow the manager to select from
multiple assets in constructing a portfolio.

\newpage

\appendix

\section{Existence and uniqueness of $h(x, \alpha, \eta)$} \label{AppendixA}

In this section, we show that the solution to
\eqref{integral eq for survival time}) exists and is differentiable.

Below we state the outline of our proof.
Let $T^{\alpha, \eta}$ denote the operator, which maps $C^{loc}(D)$ to itself,
by
\begin{equation*}
    T^{\alpha, \eta} g(x, \alpha) =
    1 + \int_{-\eta}^{+\infty} g(y, \alpha) \frac{1}{\sqrt{1 - \alpha^2}}
    f(\frac{y - \alpha x}{\sqrt{1 - \alpha^2}}) dy
\end{equation*}
A solution to \eqref{integral eq for survival time} is given by the fixed
point of $T^{\alpha, \eta}$, provided one exists.
If $T^{\alpha, \eta}$ is a family of contraction maps, then
lemma \ref{thm pde solution} is a direct result of the contraction mapping
theorem. However, for any $R > 0$, $T^{\alpha, \eta}$ fails to map the radius
$R$ ball in $C(D)$ into itself. In fact, given $g \in C(D)$, we have
\begin{equation*}
    \|T^{\alpha, \eta} g\|_{L^\infty_x} \leq 1 + \|g\|_{L^\infty_x}
    \sup_x \int_{-\eta}^{+\infty} \frac{1}{\sqrt{1 - \alpha^2}}
    f(\frac{y - \alpha x}{\sqrt{1 - \alpha^2}}) dy = 1 + \|g\|_{L^\infty_x},
\end{equation*}
and equality can be approached by a series of suitably chosen g's. \\

The problem is caused by the unboundedness of the expected survival time when
as $x$ approaches infinity. However, the expected survival time is locally
continuous and bounded. To overcome the obstacle, we define a family of
operators which approximates $T^{\alpha, \eta}$ locally and show that the
fixed points of these operators converge locally.

\begin{defn}\label{defn:Tfamily}
For $\beta > 0$, let $\phi_{\beta}(y)$ be the smooth decreasing function
defined by
\begin{equation}
    \phi_{\beta}(y) =
    \begin{cases}
        1 &y < 0 \\
        e^{-\beta y} &y > 1.
    \end{cases}
\end{equation}
Define the operator family
\begin{equation*}
    T^{\alpha, \eta, \beta}: C_\loc(D) \rightarrow C_\loc(D)
\end{equation*}
by
\begin{equation} \label{Talphabeta}
    T^{\alpha, \eta, \beta} g(x) = 1 + \int_{-\eta}^{+\infty} g(y)\phi_{\beta}(y)
    \frac{1}{\sqrt{1 - \alpha^2}} f(\frac{y - \alpha x}{\sqrt{1 - \alpha^2}}) dy.
\end{equation}
\end{defn}

We derive some estimates for $T^{\alpha, \eta, \beta}$ in the following
\begin{lem}
    Let $T^{\alpha, \eta, \beta}$ be as in Definition \ref{defn:Tfamily}. Then
    \begin{enumerate}
        \item $T^{\alpha, \eta, \beta}$ is bounded on $L^\infty_x((-\eta, +\infty))$,
            and we have
            \begin{equation}
                \|T^{\alpha, \eta, \beta} g\|_{L^\infty_x} \leq 1 + C(\alpha, \eta, \beta)\|g\|_{L^\infty_x}
            \end{equation}
            where $C(\alpha, \eta, \beta) < 1$ and
            $C(\alpha, \eta, \beta) \rightarrow 1$ as $\beta \rightarrow 0$.
        \item $T^{\alpha, \eta, \beta}$ is a contraction mapping which maps the ball
            \[
            B\left(0, \frac{1}{1 - C(\alpha, \eta, \beta)}\right) =
            \left\{g: \sup_x|g(x)| \leq \frac{1}{1 - C(\alpha, \eta, \beta)} \right\}
            \]
            into itself and satisfies
        \begin{equation}
            \|T^{\alpha, \eta, \beta} g_1 - T^{\alpha, \eta, \beta} g_2\|_{L^\infty_x}
            \leq C(\alpha, \eta, \beta) \|g_1 - g_2\|_{L^\infty_x}
        \end{equation}
    \end{enumerate}
\end{lem}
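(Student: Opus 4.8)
The plan is to route both parts through a single scalar, the multiplicative gain of the operator,
\[
    C(\alpha, \eta, \beta) := \sup_{x > -\eta} \int_{-\eta}^{+\infty} \phi_{\beta}(y) \frac{1}{\sqrt{1 - \alpha^2}} f\!\left(\frac{y - \alpha x}{\sqrt{1 - \alpha^2}}\right) dy,
\]
and to prove separately that $C(\alpha, \eta, \beta) < 1$ and that $C(\alpha, \eta, \beta) \to 1$ as $\beta \to 0$. With this definition the bound in (1) is immediate: since $\phi_{\beta} \geq 0$ and $|g(y)| \leq \|g\|_{L^\infty_x}$ pointwise, applying the triangle inequality to \eqref{Talphabeta} gives $|T^{\alpha, \eta, \beta} g(x)| \leq 1 + \|g\|_{L^\infty_x} \int_{-\eta}^{+\infty} \phi_{\beta}(y) \tfrac{1}{\sqrt{1 - \alpha^2}} f(\tfrac{y - \alpha x}{\sqrt{1 - \alpha^2}})\, dy$ for each $x$, and taking the supremum over $x > -\eta$ yields the claimed inequality with constant $C(\alpha, \eta, \beta)$.

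The heart of the argument is the strict bound $C < 1$. First I would record that $\phi_{\beta}(y) \leq 1$ for every $y$, with $\phi_{\beta}(y) < 1$ strictly once $y > 0$; since the Gaussian kernel is everywhere positive and $(-\eta, +\infty)$ contains $\{y > 0\}$, the inner integrand $I(x)$ is strictly smaller than the undamped integral $\int_{-\eta}^{+\infty} \tfrac{1}{\sqrt{1-\alpha^2}} f(\tfrac{y - \alpha x}{\sqrt{1-\alpha^2}})\, dy = 1 - F\!\left(\tfrac{-\eta - \alpha x}{\sqrt{1-\alpha^2}}\right) \leq 1$ for each fixed $x$. This is only a pointwise strict inequality, so to pass to the supremum I would show that $I(x) \to 0$ as $x \to +\infty$. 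Using the uniform bound $\phi_{\beta}(y) \leq e^{\beta} e^{-\beta y}$ together with the Gaussian moment generating identity, one gets $I(x) \leq e^{\beta + \beta^2(1 - \alpha^2)/2}\, e^{-\beta \alpha x}$, which tends to $0$ as $x \to +\infty$ (for $\alpha > 0$). Since $I$ is continuous on $[-\eta, +\infty)$ and vanishes at infinity, its supremum is attained at a finite $x^{\ast}$, where the strict pointwise inequality forces $C = I(x^{\ast}) < 1$.

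For the limit $C \to 1$ as $\beta \to 0$ I would fix a large $x_0$ and apply dominated convergence (with dominating function the kernel): since $\phi_{\beta} \leq 1$ and $\phi_{\beta}(y) \to 1$ pointwise as $\beta \to 0$, we have $I_{\beta}(x_0) \to 1 - F\!\left(\tfrac{-\eta - \alpha x_0}{\sqrt{1-\alpha^2}}\right)$, hence $\liminf_{\beta \to 0} C(\alpha, \eta, \beta) \geq 1 - F\!\left(\tfrac{-\eta - \alpha x_0}{\sqrt{1-\alpha^2}}\right)$. Since $\alpha > 0$, letting $x_0 \to +\infty$ sends the right-hand side to $1$, so $\liminf_{\beta \to 0} C \geq 1$; combined with $C < 1$ this gives $C(\alpha, \eta, \beta) \to 1$.

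Finally, part (2) is formal once (1) is available. The operator $T^{\alpha, \eta, \beta}$ is affine in $g$, with the additive constant $1$ as its inhomogeneous part, so $T^{\alpha, \eta, \beta} g_1 - T^{\alpha, \eta, \beta} g_2$ equals the integral operator applied to $g_1 - g_2$; the estimate of (1), applied to $g_1 - g_2$ (for which the additive $1$ drops out), then gives $\|T^{\alpha, \eta, \beta} g_1 - T^{\alpha, \eta, \beta} g_2\|_{L^\infty_x} \leq C \|g_1 - g_2\|_{L^\infty_x}$, so $T^{\alpha, \eta, \beta}$ is a contraction with ratio $C < 1$. Moreover, if $\|g\|_{L^\infty_x} \leq \tfrac{1}{1 - C}$ then (1) yields $\|T^{\alpha, \eta, \beta} g\|_{L^\infty_x} \leq 1 + C \cdot \tfrac{1}{1 - C} = \tfrac{1}{1 - C}$, so the stated ball is invariant. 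I expect the one genuine obstacle to be the strict inequality $C < 1$: promoting the easy pointwise strict bound to a strict bound on the supremum is precisely where the damping $\phi_{\beta}$ is needed, through the decay estimate for $I(x)$ as $x \to +\infty$; the contraction and invariance statements then follow mechanically.
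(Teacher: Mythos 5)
Your proposal is correct and follows essentially the same route as the paper: you define the same constant $C(\alpha, \eta, \beta) = \sup_x \int_{-\eta}^{+\infty} \phi_{\beta}(y)\frac{1}{\sqrt{1-\alpha^2}} f\bigl(\frac{y-\alpha x}{\sqrt{1-\alpha^2}}\bigr)\,dy$, obtain (1) by the same triangle-inequality bound, and derive (2) formally from (1) via the affine structure, exactly as the paper does. The only difference is one of completeness: the paper asserts that the key properties of $C$ ``follow straightforwardly,'' whereas you actually prove $C < 1$ (via the bound $\phi_\beta(y) \leq e^{\beta(1-y)}$, the Gaussian MGF, and the decay of the kernel integral as $x \to +\infty$, which forces the supremum to be attained) and the limit $C \to 1$ as $\beta \to 0$ for $\alpha > 0$ --- details that are genuinely needed and correct, with the trivial caveat that at $\alpha = 0$ the integral is constant in $x$ so $C \leq 1 - F(-\eta) < 1$ directly (and there the limit claim $C \to 1$ in fact fails, a blemish in the lemma's statement rather than in your argument).
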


\begin{proof}
    We prove (1) first. We have
    \begin{equation}
        \|T^{\alpha, \eta, \beta} g\|_{L^\infty_x} \leq 1 + \|g\|_{L^\infty_x}
        \sup_x \int_{-\eta}^{+\infty} \phi_{\beta}(y)\frac{1}{\sqrt{1 - \alpha^2}}
        f(\frac{y - \alpha x}{\sqrt{1 - \alpha^2}}) dy.
    \end{equation}
    Let 
    \begin{equation*}
        C(\alpha, \eta, \beta) = \sup_x \int_{-\eta}^{+\infty}
        \phi_{\beta}(y)\frac{1}{\sqrt{1 - \alpha^2}}
        f(\frac{y - \alpha x}{\sqrt{1 - \alpha^2}}) dy
    \end{equation*}
    Then (1) follows straightforwardly and (2) follows directly from (1).
    \end{proof}

\begin{lem}\label{lem log bound}
We prove an apriori bound for the fixed point of $T^{\alpha, \eta}$:
For fixed $-1 < \alpha < 1$, $\eta > 0$, there exist constants
$A_1(\alpha, \eta)$, $A_2(\alpha, \eta)$ and $R(\alpha, \eta) > 0$, such that
if g(x) satisfies the logarithmic growth bound
\begin{equation}\label{log growth bound}
    g(x) \leq
    \begin{cases}
        A_1(\alpha, \eta), & for\ -\eta < x < R(\alpha, \eta) \\
        A_1(\alpha, \eta) + A_2(\alpha, \eta)\log x & for\ x \geq R(\alpha, \eta),
    \end{cases}
\end{equation}
then $T^{\alpha, \eta} g(x)$ also satisfies the same estimates \eqref{log growth bound}.
\end{lem}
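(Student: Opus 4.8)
The plan is to exploit the fact that $T^{\alpha,\eta}$ has a nonnegative integration kernel, hence is monotone, and to reduce the claim to checking that the extremal profile is mapped below itself. Write $k_{x}(y) = \frac{1}{\sqrt{1-\alpha^2}} f\!\left(\frac{y-\alpha x}{\sqrt{1-\alpha^2}}\right)$, the density of $\mathcal{N}(\alpha x, 1-\alpha^2)$, and let $\bar g$ denote the right-hand side of \eqref{log growth bound}, i.e. $\bar g(y) = A_1$ for $-\eta < y < R$ and $\bar g(y) = A_1 + A_2 \log y$ for $y \geq R$. Since $k_x \geq 0$, the hypothesis $g \leq \bar g$ gives $T^{\alpha,\eta} g \leq T^{\alpha,\eta}\bar g$, so it suffices to choose $A_1, A_2, R > 0$ (with $R > 1$, so $\bar g \geq A_1$ throughout) for which $T^{\alpha,\eta}\bar g \leq \bar g$ on all of $(-\eta,\infty)$. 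The case $\alpha = 0$ is immediate, since then $k_x$ is the fixed standard density and $T^{\alpha,\eta}\bar g$ is a constant, absorbed by taking $A_1$ large; so I assume henceforth $0 < \alpha < 1$ and split into a far region $x \geq R$ and a near region $-\eta < x < R$.

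The crux is the far region. Here I would use the concavity of $\log$ through the tangent-line bound $\log y \leq \log(\alpha x) + (y - \alpha x)/(\alpha x)$, valid for $y, \alpha x > 0$. Writing $T^{\alpha,\eta}\bar g(x) = 1 + A_1 P(x) + A_2 \int_R^\infty (\log y) k_x(y)\, dy$ with $P(x) = \int_{-\eta}^\infty k_x(y)\,dy \leq 1$, this inequality yields
\[
    \int_R^\infty (\log y) k_x(y)\, dy \leq \log(\alpha x)
    + \frac{1}{\alpha x}\int_R^\infty (y - \alpha x) k_x(y)\, dy .
\]
The truncated first-moment integral equals $\sqrt{1-\alpha^2}\, f(a)$ with $a = (R-\alpha x)/\sqrt{1-\alpha^2}$, hence is bounded by $\sqrt{1-\alpha^2}/\sqrt{2\pi}$, so the correction term is $O(1/x)$. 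Since $\log(\alpha x) = \log x + \log\alpha$ with $\log\alpha < 0$, and since the term $A_1 P(x) \leq A_1$ cancels against the $A_1$ in $\bar g$, the desired inequality $T^{\alpha,\eta}\bar g(x) \leq A_1 + A_2\log x$ reduces to $1 + A_2 \log\alpha + O(1/x) \leq 0$. Choosing $A_2 > 1/\log(1/\alpha)$ makes $1 + A_2\log\alpha$ strictly negative, and then taking $R$ large (also $R \geq 1/\alpha$, so $\log(\alpha x) \geq 0$) absorbs the $O(1/x)$ term uniformly for all $x \geq R$.

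With $A_2$ and $R$ now fixed, the near region is routine. For $-\eta < x < R$ I would require $T^{\alpha,\eta}\bar g(x) = 1 + A_1 P(x) + A_2 J(x) \leq A_1$, where $J(x) = \int_R^\infty (\log y) k_x(y)\, dy$ is continuous and therefore bounded on the compact range of $x$, while $1 - P(x) = F\!\left(\frac{-\eta - \alpha x}{\sqrt{1-\alpha^2}}\right)$ is bounded below by the fixed positive constant $F\!\left(\frac{-\eta-\alpha R}{\sqrt{1-\alpha^2}}\right)$. The inequality rearranges to $A_1 \geq (1 + A_2 J(x))/(1 - P(x))$, whose right-hand side has a finite supremum over $-\eta < x < R$; any $A_1$ exceeding this supremum (and the constant needed in the $\alpha=0$ case) completes the construction, and then $T^{\alpha,\eta}g \leq T^{\alpha,\eta}\bar g \leq \bar g$ as required.

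The main obstacle is the far-region estimate: one must extract from a single application of $T^{\alpha,\eta}$ a genuine gain of $\log(1/\alpha)$ in the logarithmic coefficient that beats the additive $+1$ coming from the constant term of the operator. This gain is precisely the mean reversion of the AR(1) kernel, which pushes mass at height $x$ down to height $\alpha x$, and its quantitative form is what forces the threshold $A_2 > 1/\log(1/\alpha)$. The remaining work, namely verifying that the truncation correction is uniformly $O(1/x)$ so that one fixed $R$ suffices, and then selecting $A_1$ for the near region, is bookkeeping.
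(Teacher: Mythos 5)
Your proposal is correct, and its skeleton matches the paper's: an upper bound is propagated through the positive kernel, the analysis splits into a far region $x \geq R$ and a near region $-\eta < x < R$, the near region is handled by the fact that the kernel mass on $(-\eta,\infty)$ is bounded away from $1$ so a large $A_1$ absorbs everything, and the constants are chosen in the same order ($A_2$ first, then $R$, then $A_1$), exactly as in the paper's closing remark on well-definedness. The genuine difference is in the crux far-region estimate. The paper splits the integration domain into three pieces: a window $[(1-\delta)\alpha x,(1+\delta)\alpha x]$ on which $\log y \leq \log x + \log((1+\delta)\alpha)$ supplies the negative gain, plus two Gaussian-tail pieces (deviations of size $\delta\alpha x$) that decay in $x$ and are absorbed by enlarging $R$. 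You instead apply the tangent-line bound $\log y \leq \log(\alpha x) + (y-\alpha x)/(\alpha x)$ coming from concavity, and evaluate the truncated first moment exactly, $\int_a^{\infty} u f(u)\,du = f(a) \leq 1/\sqrt{2\pi}$, so the entire tail bookkeeping collapses to a single $O(1/x)$ correction. Your route buys an explicit, essentially sharp threshold $A_2 > 1/\log(1/\alpha)$ (the paper's choice implicitly requires the slightly stronger $1 + A_2\log((1+\delta)\alpha) < 0$) and eliminates the auxiliary parameter $\delta$; the paper's domain-splitting argument is more robust in that it uses only positivity and tail decay of the kernel, not concavity of the profile or exact Gaussian moment identities, so it would survive replacing $\log$ by another slowly growing majorant or the Gaussian by another light-tailed kernel. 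Two small points to keep in your write-up: you should note (as you implicitly do via $R \geq 1/\alpha$) that $\log(\alpha x)\int_R^\infty k_x\,dy \leq \log(\alpha x)$ requires $\log(\alpha x) \geq 0$, and your monotonicity reduction $T^{\alpha,\eta}g \leq T^{\alpha,\eta}\bar g$ is legitimate here because the lemma asserts only a one-sided bound.
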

Moreover, the constants $A_1(\alpha, \eta), A_2(\alpha, \eta) $ and
$R(\alpha, \eta)$ are uniformly bounded for $(\alpha, \eta)$ in compact sets.
\begin{proof}
Assume $g(x) \in C_\loc(\R)$ satisfies the bound \eqref{log growth bound} for
$A_1, A_2$ and $R$ to be determined.
For $x \geq R(\alpha, \eta)$, let $\delta>0$ be a small constant which does not
depend on $\alpha, \eta$ (to be determined). We estimate $Tg(x)$ by
\begin{equation}
    \begin{aligned}
        T^{\alpha, \eta}g(x)
        & = 1 + \int_{-\eta}^{+\infty} g(y) \frac{1}{\sqrt{1 - \alpha^2}}
        f(\frac{y - \alpha x}{\sqrt{1 - \alpha^2}}) dy \\
        & = 1 + \left[ \int_{((1 - \delta)\alpha x)}^{(1 + \delta)\alpha x} +
        \int_{-\eta}^{(1 - \delta)\alpha x} +
        \int_{(1 + \delta)\alpha x}^{+\infty }\right] g(y)
        \frac{1}{\sqrt{1 - \alpha^2}} f(\frac{y - \alpha x}{\sqrt{1 - \alpha^2}}) dy \\
        & = 1 + I + II + III
    \end{aligned}
\end{equation}
where I, II, III are the three truncated integrals in order.
Now we estimate them separately.

For I, we have $y \in [(1 - \delta)\alpha x, (1 + \delta)\alpha x]$, and
\begin{equation*}
    \begin{aligned}
        g(y)
        & \leq A_1(\alpha, \eta) +
        A_2(\alpha, \eta) \log(1 + \delta)\alpha x \\
        & = A_1(\alpha, \eta) +
        A_2(\alpha, \eta) \log(1 + \delta)\alpha + A_2(\alpha, \eta) \log x.
    \end{aligned}
\end{equation*}
Thus
\begin{equation}
    \begin{aligned}
        I
        & \leq A_1(\alpha, \eta)
        \int_{((1 - \delta)\alpha x)}^{(1 + \delta)\alpha x}
        \frac{1}{\sqrt{1 - \alpha^2}}
        f(\frac{y - \alpha x}{\sqrt{1 - \alpha^2}}) dy +
        A_2(\alpha, \eta)\log (1 + \delta)\alpha + A_2(\alpha, \eta) \log x.
    \end{aligned}
\end{equation}
For II, we have
\begin{equation*}
    g(y) \leq A_1(\alpha, \eta) + A_2(\alpha, \eta) \log \alpha x
\end{equation*}
Thus
\begin{equation}\label{eq II x>eta}
    II \leq (A_1(\alpha, \eta) + A_2(\alpha, \eta)\log \alpha x)
    \cdot P(\epsilon \leq -\frac{\delta \alpha x}{\sqrt{1 - \alpha^2}})
\end{equation}
where $\epsilon \sim \mathcal{N}(0,1)$ and $P(\cdot)$ is its associated probability.\\
Similarly, for III we have
\begin{equation}\label{eq III x>eta}
    III \leq \int_{\delta \alpha x}^{+\infty} (A_1(\alpha, \eta) +
    A_2(\alpha, \eta)\log\alpha y) \frac{1}{\sqrt{1 - \alpha^2}}
    f(\frac{y - \alpha x}{\sqrt{1 - \alpha^2}}) dy
\end{equation}
Now, we choose suitable values depending on $\alpha$ and $\eta$ for
$A_1(\alpha, \eta)$, $A_2(\alpha, \eta)$ and $R(\alpha, \eta)$ in order to have
\begin{equation}
    1 + I + II + III \leq A_1(\alpha, \eta) + A_2(\alpha, \eta) \log x.
\end{equation}
Denote the first and second term on the RHS of \eqref{eq II x>eta}) by $II_1$
and $II_2$ respectively, and similarly for $III_1$ and $III_2$ for
\eqref{eq III x>eta}). We now estimate the sum $1 + I + II_1 + III_1$.
Since $\log(1 + \delta \alpha) < 0$, we can choose $A_2(\alpha, \eta)$ large
enough, such that
\begin{equation}
    1 + A_2(\alpha, \eta) \log(1 + \delta)\alpha < 0.
\end{equation}
Thus
\begin{equation}\label{eq 1 log bound}
    \begin{aligned}
        1 + I + II_1 + III_1
        & \leq A_1(\alpha, \eta) + A_2(\alpha, \eta)\log x + 1 +
        A_2(\alpha, \eta) \log(1 + \delta)\alpha \\
        & < A_1(\alpha, \eta) + A_2(\alpha, \eta)\log x
    \end{aligned}
\end{equation}
Noting that the inequality \eqref{eq 1 log bound} is strict, and that $II_2$
and $III_2$ decay in x, we now choose $R(\alpha, \eta)$ large enough, which
causes $II_2 + III_3$ to be small enough such that
\begin{equation}
    1 + I + II_1 + III_1 + III_2 + III_2 <
    A_1(\alpha, \eta) + A_2(\alpha, \eta)\log x
\end{equation}
is still true.
Therefore the conclusion is proved for $x > R(\alpha, \eta)$.

It remains to prove the conclusion for $-\eta < x < R(\alpha, \eta)$.
For such x, we estimate $|Tg(x)|$ by
\begin{equation}\label{eq log x<R}
    \begin{aligned}
        |T^{\alpha, \eta}g(x)|
        & \leq 1 + \left[ \int_{-\eta}^{R(\alpha, \eta)} +
        \int_{R(\alpha, \eta)}^{+\infty} \right]
        g(y)\frac{1}{\sqrt{1 - \alpha^2}}
        f(\frac{y - \alpha x}{\sqrt{1 - \alpha^2}}) dy \\
        & = 1 + IV + V
    \end{aligned}
\end{equation}
where IV and V are the two truncated integrals in \eqref{eq log x<R}.\\
For $-\eta < y < R(\alpha, \eta)$,
\begin{equation*}
    g(y) = A_1(\alpha, \eta).
\end{equation*}
We estimate IV by
\begin{equation*}
    IV \leq A_1(\alpha, \eta)\int_{-\eta}^{R(\alpha, \eta)}
    \frac{1}{\sqrt{1 - \alpha^2}} f(\frac{y - \alpha x}{\sqrt{1 - \alpha^2}}) dy.
\end{equation*}
For V, we do
\begin{equation}\label{eq 2 log bound}
    \begin{aligned}
        V
        & \leq A_1(\alpha, \eta) \int_{R(\alpha, \eta)}^{+\infty}
        \frac{1}{\sqrt{1 - \alpha^2}}
        f(\frac{y - \alpha x}{\sqrt{1 - \alpha^2}}) dy \\
        & + A_2(\alpha, \eta) \int_{R(\alpha, \eta)}^{+\infty} \log(y)
        \frac{1}{\sqrt{1 - \alpha^2}} f(\frac{y -\alpha x}{\sqrt{1 - \alpha^2}}) dy \\
    \end{aligned}
\end{equation}
Denote the two terms on the RHS of \eqref{eq 2 log bound} by $V_1$ and $V_2$.
\begin{equation}\label{eq 3 log bound}
    1 + IV + V_1 \leq 1 +
    A_1(\alpha, \eta) P(\sqrt{1 - \alpha^2} \epsilon \leq -\alpha R(\alpha, \eta) - \eta).
\end{equation}
There exists $A_1(\alpha, \eta)$ large enough depending on $\alpha$, $\eta$
and our previous choice of $R(\alpha, \eta)$, such that the RHS of
\eqref{eq 3 log bound} is smaller than $A_1(\alpha, \eta)$.
We take $A_1(\alpha, \eta)$ to be even larger if necessary, depending on
$A_2(\alpha, \eta)$ and $R(\alpha, \eta)$, such that the bound still holds
after adding $V_2$.
\begin{rem}
The constants are well defined, as the choice of $A_2(\alpha, \eta)$ depends
only on $\alpha, \eta$ and then the choice of $R(\alpha, \eta)$ depends on
$\alpha$, $\eta$ and $A_2(\alpha, \eta)$. Finally, the choice of
$A_1(\alpha, \eta)$ depends on $\alpha$, $\eta$ and the previous choices of the
two other constants.
\end{rem}
The proof is concluded.
\end{proof}

\begin{lem}\label{lem log bound beta}
The result of Lemma \ref{lem log bound} is also true for $T^{\alpha, \eta, \beta}$
with $\beta > 0$, with constants $A_1(\alpha, \eta)$, $A_2(\alpha, \eta)$ and
$R(\alpha, \eta)$ uniform in $\beta$.
\end{lem}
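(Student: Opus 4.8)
The plan is to deduce this directly from Lemma~\ref{lem log bound} by exploiting the elementary fact that the cutoff $\phi_\beta$ never exceeds $1$. By construction $\phi_\beta$ is smooth, decreasing, and equal to $1$ on $(-\infty,0)$, so $0<\phi_\beta(y)\le 1$ for every $y$ and every $\beta>0$. Since the transition kernel $\frac{1}{\sqrt{1-\alpha^2}}f\!\left(\frac{y-\alpha x}{\sqrt{1-\alpha^2}}\right)$ is non-negative, inserting the factor $\phi_\beta(y)$ into the integral defining $T^{\alpha,\eta,\beta}$ can only shrink the contribution of any non-negative integrand. This is the entire mechanism behind the lemma, and the only point to get right is that the domination is applied with the correct signs.

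More precisely, I would first denote by $B(y)$ the envelope on the right-hand side of \eqref{log growth bound}, i.e.\ $B(y)=A_1(\alpha,\eta)$ for $-\eta<y<R(\alpha,\eta)$ and $B(y)=A_1(\alpha,\eta)+A_2(\alpha,\eta)\log y$ for $y\ge R(\alpha,\eta)$, and record that for the constants produced in the proof of Lemma~\ref{lem log bound} this envelope is non-negative. Assuming $g\le B$ and using that $\phi_\beta$ times the kernel is non-negative, one has
\begin{equation*}
    T^{\alpha,\eta,\beta}g(x) = 1+\int_{-\eta}^{+\infty} g(y)\,\phi_\beta(y)\,\frac{1}{\sqrt{1-\alpha^2}}f\!\left(\frac{y-\alpha x}{\sqrt{1-\alpha^2}}\right)dy \le 1+\int_{-\eta}^{+\infty} B(y)\,\phi_\beta(y)\,\frac{1}{\sqrt{1-\alpha^2}}f\!\left(\frac{y-\alpha x}{\sqrt{1-\alpha^2}}\right)dy,
\end{equation*}
and then, since $0<\phi_\beta\le 1$ and $B\ge 0$,
\begin{equation*}
    T^{\alpha,\eta,\beta}g(x) \le 1+\int_{-\eta}^{+\infty} B(y)\,\frac{1}{\sqrt{1-\alpha^2}}f\!\left(\frac{y-\alpha x}{\sqrt{1-\alpha^2}}\right)dy.
\end{equation*}
The last quantity is exactly the expression estimated in the proof of Lemma~\ref{lem log bound}, where $g$ is replaced by its upper bound $B$ region by region before integrating; it was shown there to be bounded by $A_1(\alpha,\eta)$ for $-\eta<x<R(\alpha,\eta)$ and by $A_1(\alpha,\eta)+A_2(\alpha,\eta)\log x$ for $x\ge R(\alpha,\eta)$. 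Hence $T^{\alpha,\eta,\beta}g$ satisfies the same bound \eqref{log growth bound}. If the two-sided invariant $|g|\le B$ is preferred (matching the $|T^{\alpha,\eta}g|$ estimate used for $-\eta<x<R$ in Lemma~\ref{lem log bound}), the identical chain applied to $|T^{\alpha,\eta,\beta}g(x)|\le 1+\int_{-\eta}^{+\infty}|g|\,\phi_\beta\,\frac{1}{\sqrt{1-\alpha^2}}f\!\left(\frac{y-\alpha x}{\sqrt{1-\alpha^2}}\right)dy$ gives the same conclusion.

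Crucially, the constants $A_1(\alpha,\eta)$, $A_2(\alpha,\eta)$, $R(\alpha,\eta)$ appearing here are literally those supplied by Lemma~\ref{lem log bound}: the factor $\phi_\beta\le 1$ is only ever used to decrease an integral, so no $\beta$-dependent adjustment is needed, which yields uniformity in $\beta$ for free, and the uniformity over compact sets of $(\alpha,\eta)$ noted at the end of Lemma~\ref{lem log bound} transfers verbatim. There is essentially no hard step; the one thing I would verify carefully is that at every place where $\phi_\beta$ is dropped the integrand it multiplies is non-negative, which holds because the Gaussian kernel is non-negative and the envelope $B$ is non-negative for the chosen constants. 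I would therefore present the argument as a short corollary-style reduction rather than repeating the case analysis of Lemma~\ref{lem log bound}.
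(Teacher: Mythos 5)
Your proof is correct and follows essentially the same route as the paper's: the paper's own argument is precisely the observation that $0 < \phi_\beta \leq 1$ implies $T^{\alpha,\eta,\beta} g \leq T^{\alpha,\eta} g$ for non-negative $g$, so the bound of Lemma~\ref{lem log bound} carries over with the same constants, giving uniformity in $\beta$. Your version merely spells out the domination by the envelope and the sign checks, which the paper leaves implicit.
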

\begin{proof}
This follows from the fact that for any $\beta > 0$,
it holds that $T^{\alpha, \eta, \beta} g \leq T^{\alpha, \eta} g$
for positive functions $g$.
Thus the proof of Lemma \ref{lem log bound} applies to
$T^{\alpha, \eta, \beta}$ without needing to adjust constants.
\end{proof}
Denote by $h^\beta(x, \alpha, \eta)$ the solution to the equation
\begin{equation*}
    T^{\alpha, \eta, \beta}g(x) = g(x)
\end{equation*}
Then, using Lemma \ref{lem log bound beta}, we get the logarithmic growth bound
for the solutions:
\begin{cor}
The function $h^\beta(x, \alpha, \eta)$ is locally $C^1$ and satisfies
\begin{equation}\label{log bound for solution}
    h^\beta(x, \alpha, \eta) \leq
    \begin{cases}
        A_1(\alpha, \eta),  &for\ -\eta < x < R(\alpha, \eta) \\
        A_1(\alpha, \eta) + A_2(\alpha, \eta)\log x  &for\ x \geq R(\alpha, \eta)
    \end{cases}
\end{equation}
uniformly in $\beta > 0$.
\end{cor}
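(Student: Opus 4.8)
The plan is to obtain $h^\beta$ from the contraction mapping theorem applied to $T^{\alpha,\eta,\beta}$, to establish the logarithmic growth bound by a Picard iteration that stays inside the class of functions satisfying \eqref{log growth bound}, and to deduce local $C^1$ regularity from the smoothness of the Gaussian kernel via a parametrized fixed-point argument.

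First I would fix $(\alpha,\eta)$ and recall from the contraction lemma that $T^{\alpha,\eta,\beta}$ is a contraction on the ball $B(0,\tfrac{1}{1-C(\alpha,\eta,\beta)})$, so it admits there a unique fixed point $h^\beta$, bounded for each $\beta>0$. To produce the $\beta$-uniform control I would run the Picard iteration $g_0\equiv 0$, $g_{n+1}=T^{\alpha,\eta,\beta}g_n$. The constant $g_0$ satisfies \eqref{log growth bound} with the constants $A_1,A_2,R$ furnished by Lemma \ref{lem log bound beta}, and that lemma states precisely that $T^{\alpha,\eta,\beta}$ maps the set of functions obeying \eqref{log growth bound} with these fixed constants into itself. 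By induction every $g_n$ obeys the same bound, with constants uniform in $\beta$; and since $T^{\alpha,\eta,\beta}$ is a contraction on the ball containing $g_0$, $g_n\to h^\beta$ in $L^\infty_x$ and hence pointwise. As \eqref{log growth bound} is a pointwise inequality it passes to the limit, yielding \eqref{log bound for solution} for $h^\beta$ with the same $\beta$-uniform constants.

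For the regularity I would use the fixed-point identity $h^\beta=T^{\alpha,\eta,\beta}h^\beta$, observing that its right-hand side depends on $x$ and $\alpha$ only through the smooth kernel $\tfrac{1}{\sqrt{1-\alpha^2}}f\!\left(\tfrac{y-\alpha x}{\sqrt{1-\alpha^2}}\right)$ and on $\eta$ only through the lower limit $-\eta$. Because $h^\beta$ is bounded and $\phi_\beta$ decays exponentially, one checks that $\sup_x\int\phi_\beta(y)\lvert\partial_\alpha(\text{kernel})\rvert\,dy<\infty$ for each $\beta>0$ (the exponential cutoff absorbs the $O(x)$ factor produced by $\partial_\alpha$), so $\alpha\mapsto T^{\alpha,\eta,\beta}$ is $C^1$ in operator norm on $L^\infty_x$. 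The uniform contraction principle then gives that the fixed point $h^\beta$ is $C^1$ in $\alpha$, with $H_\alpha:=\partial_\alpha h^\beta$ solving the linearized equation $H_\alpha=L\,H_\alpha+S$, where $L$ is the linear integral part of $T^{\alpha,\eta,\beta}$ (of norm $C(\alpha,\eta,\beta)<1$) and $S$ is the bounded source $\int h^\beta(y)\phi_\beta(y)\,\partial_\alpha(\text{kernel})\,dy$. Differentiation under the integral in $x$ (valid by Gaussian decay) gives $C^\infty$ dependence on $x$, and differentiating the moving lower limit in $\eta$ produces a boundary source $h^\beta(-\eta^{+})\phi_\beta(-\eta)\tfrac{1}{\sqrt{1-\alpha^2}}f\!\left(\tfrac{-\eta-\alpha x}{\sqrt{1-\alpha^2}}\right)$ together with an analogous linearized equation; combining these yields $h^\beta\in C^1_{\loc}$.

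The step I expect to be the main obstacle is the $C^1$ dependence on the parameters: one must verify the operator-norm differentiability of $\alpha\mapsto T^{\alpha,\eta,\beta}$ and the boundedness of the source $S$ --- both hinge on the exponential cutoff $\phi_\beta$ controlling the $O(x)$ growth coming from $\partial_\alpha$ of the kernel --- and one must correctly account for the boundary contribution when differentiating in $\eta$, using the one-sided value $h^\beta(-\eta^{+})$. By contrast the logarithmic bound itself is nearly immediate once Lemma \ref{lem log bound beta} and the contraction structure are invoked, and its crucial $\beta$-uniformity is inherited directly from the $\beta$-uniformity of the constants in that lemma.
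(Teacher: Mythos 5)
Your argument for the growth bound is correct and is essentially the paper's own proof in different clothing: the paper notes that the class $\tB$ of continuous functions obeying \eqref{log growth bound} is closed under uniform convergence and that $T^{\alpha,\eta,\beta}$ is a contraction on $\tB \cap B(0, \tfrac{1}{1-C(\alpha,\eta,\beta)})$, so its fixed point lies in $\tB$; your Picard iteration from $g_0 \equiv 0$, with invariance of the class supplied by Lemma \ref{lem log bound beta} and the pointwise bound passed to the uniform limit, is the same mechanism, and the $\beta$-uniformity is inherited in the same way. (One detail to make explicit: Lemma \ref{lem log bound beta} rests on the comparison $T^{\alpha,\eta,\beta} g \leq T^{\alpha,\eta} g$ for \emph{positive} $g$, so you should record that your iterates are nonnegative --- true, since $g_0 \equiv 0$ and the kernel is positive.)

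The $C^1$ part of your proposal, however, contains a genuine gap. You claim that $\sup_x \int \phi_\beta(y)\,\lvert \partial_\alpha f_\alpha(x,y)\rvert\, dy < \infty$ for each $\beta>0$, so that $\alpha \mapsto T^{\alpha,\eta,\beta}$ is $C^1$ in operator norm on $L^\infty_x$ and the source $S$ is bounded. This fails exactly at $\alpha = 0$, which belongs to $D$ and is the only point the paper ultimately uses: by Lemma \ref{lem compute f alpha}, $\partial_\alpha f_\alpha(x,y)$ at $\alpha = 0$ equals $xyf(y)$, whence
\begin{equation*}
    \sup_{x > -\eta} \int_{-\eta}^{+\infty} \phi_\beta(y)\,
    \left| \left. \frac{\partial f_\alpha}{\partial \alpha}(x,y)\right|_{\alpha=0} \right| dy
    \;=\; \sup_{x > -\eta} \, |x| \int_{-\eta}^{+\infty} \phi_\beta(y)\, |y|\, f(y)\, dy \;=\; +\infty .
\end{equation*}
The cutoff $\phi_\beta$ acts in $y$, not in $x$; at $\alpha=0$ the factor $x$ sits outside the $y$-integral and is not absorbed. (For $\alpha>0$ the coupling $y \approx \alpha x$ does let $\phi_\beta$ absorb it, but the resulting bound degenerates like $(\beta\alpha)^{-1}$ as $\alpha \to 0^{+}$, so no operator-norm derivative exists at $\alpha = 0$.) This failure is not an artifact of the method: the derivative is genuinely unbounded in $x$ --- compare Lemma \ref{lem derivatives of h}, where $\partial_\alpha h(0,\eta,x) = \tfrac{f(-\eta)^2}{F(-\eta)^2} + \tfrac{f(-\eta)}{F(-\eta)}x$ --- so no argument framed in $L^\infty_x$ can produce $C^1$ dependence on $\alpha$. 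Since the corollary asserts only \emph{local} $C^1$ regularity, the repair is to abandon the global $L^\infty_x$ frame: for instance, run your uniform-contraction argument in a weighted space with norm $\sup_x \lvert g(x)\rvert/(1+\lvert x\rvert)$ (verifying that the linear part remains a contraction there), or localize in $x$. For comparison, the paper's own proof of this corollary addresses only the growth bound and defers regularity to the differentiation-under-the-integral computations in Lemma \ref{lem diagonal argument} (and even there only the $x$-derivative is straightforward, since the $\alpha$- and $\eta$-dependence enters through $h^\beta$ itself); so you attempted more than the paper does, but the route you chose breaks down precisely at $\alpha = 0$.
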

\begin{proof}
Let $\tB$ denote the set of continuous functions which satisfy the growth
bound \eqref{log bound for solution}). Then $\tB$ is closed under uniform
convergence. Thus $T^{\alpha, \eta, \beta}$ is a contraction map on
$\tB \cap B(0, \frac{1}{1 - C(\alpha, \eta, \beta)})$, and so the solution
satisfies \eqref{log bound for solution} as desired.
\end{proof}
Now we derive the solution to \eqref{eq for partial alpha of survival time}
from subsequence convergence of $h^\beta$.
\begin{lem}\label{lem diagonal argument}
There exists a subsequence $\beta_j \rightarrow 0$ such that, if we denote
$h_j = h^{\beta_j}$ for convenience, then there exists some function
$h(x, \alpha, \eta) \in C^{loc}(D)$ such that
$h_j(x, \alpha, \eta) \rightarrow h(x, \alpha, \eta)$ on $D$ pointwise,
and the convergence is locally uniform in the sense that for any (relatively)
compact subset $D' \subset D $,
$h_j \rightarrow h$ on $D'$ uniformly in $\beta$.
\end{lem}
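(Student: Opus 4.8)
The plan is to prove relative compactness of the family $\{h^\beta\}_{\beta>0}$ in $C_\loc(D)$ via the Arzel\'a--Ascoli theorem, applied on an exhaustion of $D$ by compact sets, followed by a diagonal extraction. Fix an increasing sequence of compact sets $D_1 \subset D_2 \subset \cdots$ with $\bigcup_n D_n = D$, each $D_n$ bounded away from $\alpha = 1$ and from the boundary $x = -\eta$. The two ingredients we need on each $D_n$, \emph{uniformly in} $\beta$, are (i) a uniform bound $\sup_\beta \|h^\beta\|_{L^\infty(D_n)} < \infty$ and (ii) equicontinuity. Ingredient (i) is already supplied by the corollary establishing \eqref{log bound for solution}: the fixed points $h^\beta$ obey the logarithmic growth bound with constants $A_1, A_2, R$ that are uniform in $\beta$ and locally bounded in $(\alpha, \eta)$, so on the bounded set $D_n$ we get a $\beta$-independent $L^\infty$ bound. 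Once (i) and (ii) are in hand, Arzel\'a--Ascoli yields on $D_1$ a sequence $\beta^{(1)}_j \to 0$ with $h^{\beta^{(1)}_j}$ converging uniformly on $D_1$; passing to successive subsequences on $D_2, D_3, \ldots$ and taking the diagonal $\beta_j = \beta^{(j)}_j$ produces a single sequence converging uniformly on every $D_n$, hence pointwise on $D$ and uniformly on every compact $D' \subset D$. A uniform limit of continuous functions is continuous, so the limit $h$ lies in $C_\loc(D)$.

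The bulk of the work is ingredient (ii). In the $x$ direction it is clean: since the only $x$-dependence in the fixed-point equation $h^\beta = T^{\alpha,\eta,\beta} h^\beta$ sits in the Gaussian kernel, differentiating under the integral gives
\begin{equation*}
    \partial_x h^\beta(x,\alpha,\eta)
    = \int_{-\eta}^{+\infty} h^\beta(y,\alpha,\eta)\,\phi_\beta(y)\,
      \partial_x\!\left[\frac{1}{\sqrt{1-\alpha^2}}
      f\!\left(\frac{y-\alpha x}{\sqrt{1-\alpha^2}}\right)\right] dy ,
\end{equation*}
where hereafter $K_\alpha(x,y) := \tfrac{1}{\sqrt{1-\alpha^2}} f(\tfrac{y-\alpha x}{\sqrt{1-\alpha^2}})$ denotes the Gaussian transition kernel of $\tX_t$. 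The crucial point is that $\partial_x K_\alpha$ decays like a Gaussian in $y$ centered at $\alpha x$, so bounding $\phi_\beta \le 1$ and inserting the logarithmic bound on $h^\beta$ leaves a convergent integral whose value is controlled, for $x$ in a bounded range, by a constant depending only on $D_n$. Thus $\|\partial_x h^\beta\|_{L^\infty(D_n)} \le C(D_n)$ uniformly in $\beta$; the Gaussian decay ``beats'' the logarithmic growth, which is exactly why the estimate does not degenerate as $\beta \to 0$. This gives equicontinuity in $x$.

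The main obstacle is equicontinuity in the parameter directions $\alpha$ and $\eta$, uniformly in $\beta$. Differentiating the fixed-point equation formally in $\alpha$ gives, for $p := \partial_\alpha h^\beta$, an equation $p = T^{\alpha,\eta,\beta}_0 p + S_\alpha$, where $T^{\alpha,\eta,\beta}_0$ is the homogeneous part of $T^{\alpha,\eta,\beta}$ (drop the leading $1$) and $S_\alpha(x) = \int_{-\eta}^{+\infty} h^\beta(y,\alpha,\eta)\,\phi_\beta(y)\,\partial_\alpha K_\alpha(x,y)\,dy$. Differentiation in $\eta$ is similar but simpler: since $\eta$ enters $T^{\alpha,\eta,\beta}$ only through the lower limit of integration, $q := \partial_\eta h^\beta$ satisfies $q = T^{\alpha,\eta,\beta}_0 q + S_\eta$ with $S_\eta(x) = h^\beta(-\eta^+,\alpha,\eta)\,\phi_\beta(-\eta)\,K_\alpha(x,-\eta)$ a pure boundary term. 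Using $\phi_\beta \le 1$ together with the uniform logarithmic bound on $h^\beta$ (and its uniform bound at $x=-\eta^+$), one checks that $S_\eta$ is bounded while $S_\alpha$ grows at most linearly in $x$, with constants uniform in $\beta$ — consistent with the explicit derivatives computed in Appendix \ref{AppendixB}. The temptation to close the estimate by inverting $I - T^{\alpha,\eta,\beta}_0$ fails, because its norm $C(\alpha,\eta,\beta) \to 1$ as $\beta \to 0$ and the factor $(1-C)^{-1}$ is not uniform. The resolution is to re-run the a priori growth argument of Lemma \ref{lem log bound}, now for the inhomogeneous equation $p = T^{\alpha,\eta,\beta}_0 p + S_\alpha$ (resp. for $q$) with the growing source in place of the constant $1$; as in Lemma \ref{lem log bound beta} the truncation $\phi_\beta$ only helps, so this produces growth bounds for $\partial_\alpha h^\beta$ and $\partial_\eta h^\beta$ that are uniform in $\beta$ and locally bounded. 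This supplies the missing equicontinuity in $(\alpha,\eta)$, and combining all three directions yields joint equicontinuity on each $D_n$, completing ingredient (ii) and hence the proof.
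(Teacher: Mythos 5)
Your proposal is correct and follows the same skeleton as the paper's own proof: uniform boundedness on compact subsets from the $\beta$-uniform logarithmic growth bound (Lemma \ref{lem log bound beta} and its corollary), equicontinuity via uniform-in-$\beta$ bounds on the partial derivatives of $h^\beta$, then Arzel\`a--Ascoli on an exhaustion of $D$ followed by a diagonal extraction; your estimate for $\partial_x h^\beta$ is exactly the paper's display \eqref{equicontinuous for h^beta}. Where you genuinely diverge is in the parameter directions. The paper disposes of these with the single sentence that ``a similar argument shows uniform boundedness of $\frac{\partial h^\beta}{\partial \alpha}$ and $\frac{\partial h^\beta}{\partial \eta}$,'' but the argument is not literally similar: unlike the $x$-derivative, differentiating the fixed-point identity in $\alpha$ or $\eta$ re-introduces the unknown derivative under the integral, producing a self-referential inhomogeneous equation $p = T_0 p + S$ rather than a closed formula. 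You correctly identify this, note that the obvious way to close the estimate --- inverting $I - T_0$ --- costs a factor $(1 - C(\alpha,\eta,\beta))^{-1}$ that degenerates as $\beta \to 0$, and repair it by re-running the a priori growth argument of Lemma \ref{lem log bound} for the inhomogeneous equation, using that $S_\eta$ is a bounded boundary term, that $S_\alpha$ has controlled (essentially linear, possibly with a logarithmic correction, which the same contraction mechanism absorbs) growth uniformly in $\beta$, and that the kernel concentrates near $y \approx \alpha x$ with $\alpha$ bounded away from $1$. This is consistent with the explicit formulas of Appendix \ref{AppendixB}, where $\partial_\alpha h$ indeed grows linearly in $x$, and it supplies a rigorous substitute for the one step the paper leaves implicit; in that respect your write-up is strictly more complete than the paper's, at the cost of an extra a priori estimate.
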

\begin{proof}
We begin by using Arzel\`{a}-Ascoli theorem to construct a locally uniformly
convergent subsequence. Then we use the standard diagonal argument to select a
subsequence that converges uniformly on any compact subset of $D$.
Fix $R > 0$ and $0 < \beta < 1$ and let $D' \subset D$ be such that x and
$\eta$ are bounded and $\alpha$ is bounded away from 1.
We claim that
\begin{itemize}
    \item $h^\beta$ is bounded on $D'$ uniformly in $\beta$
    \item $h^\beta$ is equicontinuous on $D'$.
\end{itemize}
The uniform boundedness follows from Lemma \ref{lem log bound beta}.
Note that the constants $A_1(\alpha, \eta)$, $A_2(\alpha, \eta)$ and
$R(\alpha, \eta)$ are bounded on the compact interval
$-1 + \delta \leq \alpha \leq 1 - \delta$ and do not depend on $\beta$.
Therefore the boundedness is uniform in $\beta$.
We will prove equicontinuity by estimating the derivatives of
$h^\beta(x, \alpha, \eta)$:
\begin{equation}\label{equicontinuous for h^beta}
    \begin{aligned}
        \left| \frac{\partial h^\beta}{\partial x} \right|
        & = \left| \int_{-\eta}^{+\infty} h^\beta(y, \alpha, \eta) \phi_{\beta}(y)
        \frac{\partial}{\partial x}\left( \frac{1}{\sqrt{1 - \alpha^2}}
        f(\frac{y - \alpha x}{\sqrt{1 - \alpha^2}}) \right) dy \right| \\
        & \leq \int_{-\eta}^{+\infty} h^\beta(y, \alpha, \eta)
        \left|\frac{\partial}{\partial x}\left( \frac{1}{\sqrt{1 - \alpha^2}}
        f(\frac{y - \alpha x}{\sqrt{1 - \alpha^2}}) \right)\right| dy
    \end{aligned}
\end{equation}
By Lemma \ref{lem log bound beta} and the fact that
$\frac{1}{\sqrt{1 - \alpha^2}}f(\frac{y - \alpha x}{\sqrt{1 - \alpha^2}})$ and
its derivatives w.r.t $x$ and $\alpha$ decay exponentially in y, and that the
fact that the decay speed is uniform for $(x, \alpha) \in D'$, we conclude that
the integral \eqref{equicontinuous for h^beta} is bounded in $D'$ uniformly in
$\beta$. A similar argument shows uniform boundedness of
$\frac{\partial h^\beta}{\partial \alpha}$ and
$\frac{\partial h^\beta}{\partial \eta}$. Therefore, the conditions of the
Arzel\`{a}-Ascoli theorem are verified. Consequently, there exists a
subsequence $\beta_j \rightarrow 0$ such that
$h_j(x, \alpha, \eta) \Rightarrow h(x, \alpha, \eta)$ on $D'$. The argument
holds for any compact $D' \subset D$. Thus, a standard diagonal argument will
yield a subsequence $\beta_j$, such that the associated approximated solutions
$h_j(x, \alpha, \eta)$ converge uniformly to some function $h(x, \alpha, \eta)$
on any compact subset of $D$.
\end{proof}
    
\subsubsection{Proof of lemma \ref{thm pde solution}}
Now we are ready to prove lemma 1. We begin by showing that the limit
function $h$ obtained in Lemma \ref{lem diagonal argument} solves the equation
\begin{equation*}
    T^{\alpha, \eta} h(x, \alpha, \eta) = h(x, \alpha, \eta).
\end{equation*}
In fact, we have
\begin{equation}
    \begin{aligned}
        h(x, \alpha, \eta)
        & = \lim_{j} h_j(x, \alpha, \eta) \\
        & = \lim_{j} 1 + \int_{-\eta}^{+\infty} h_j(y, \alpha, \eta) \phi_{\beta}(y)
        \frac{1}{\sqrt{1 - \alpha^2}} f(\frac{y - \alpha x}{\sqrt{1 - \alpha^2}}) dy \\
        & = 1 + \int_{-\eta}^{+\infty} h(y, \alpha, \eta)
        \frac{1}{\sqrt{1 - \alpha^2}}f(\frac{y - \alpha x}{\sqrt{1 - \alpha^2}}) dy
    \end{aligned}
\end{equation}
The last equality holds by the Dominated Convergence Theorem, which is
applicable due to the logarithmic growth bound of $h$ and the exponential
decay of $f(\cdot)$.
This proves part 1 of the lemma. Part 2 follows from
Lemma \ref{lem log bound beta} and the construction of $h$.

It only remains to prove part 3, i.e. $h \in C^2(D)$.
We first show $h \in C^1(D')$. We need only to refine our convergent subsequence
$h_j$ such that the partial derivatives uniformly converge to some continuous
functions $D_xh$, $D_\alpha h$ and $D_\eta h$ in any compact subset
$D' \subset D$.
This can be done by following the exact same argument as for part 1 where we
prove the uniform boundedness and equicontinuity of the sequence.
Then we apply the Arzel\`{a}-Ascoli argument to
$\frac{\partial h_j}{\partial x}$ and $\frac{\partial h_j}{\partial \alpha}$
instead of $h_j$. After another diagonal argument, we will have:
on any compact subset $D' \subset D$,
\begin{equation*}
    \begin{aligned}
        & h_j \Rightarrow h \\
        & \frac{\partial h_j}{\partial x} \Rightarrow D_x h \\
        & \frac{\partial h_j}{\partial \alpha} \Rightarrow D_\alpha h \\
        & \frac{\partial h_j}{\partial \eta} \Rightarrow D_\eta h
    \end{aligned}
\end{equation*}
This proves $h$ is $C^1$ in D'. Similar argument proves $C^2$ differentiability.
This concludes the proof of lemma \ref{thm pde solution}.


\section{Partial derivatives of $h(x, \alpha, \eta)$}\label{AppendixB}

In this section, we compute derivatives of H
Recall that the expected survival time function $H(\alpha, \eta)$ is defined as
\begin{equation}\label{H def APD}
    H(\alpha, \eta) = \frac{1}{1 - F(\eta)} \int_\eta^{+\infty} h(x, \alpha, \eta) f(x) dx
\end{equation}
and $h(x, \alpha, \eta)$ is the solution to the integral equation
\begin{equation}\label{h def APD}
    h(x, \alpha, \eta) = 1 + \int_{-\eta}^{+\infty} h(\alpha, \eta, y) f_\alpha (x,y) dy
\end{equation}
where
\begin{equation*}
    f_\alpha (x,y) = \frac{1}{\sqrt{1 - \alpha^2}} f(\frac{y - \alpha x}{\sqrt{1 - \alpha^2}})
\end{equation*}
We compute the derivatives of $f_\alpha$ which is required later.
\begin{lem}\label{lem compute f alpha}
    \begin{enumerate}
        \item $ f_\alpha (x,y)|_{\alpha = 0} = f(y) $
        \item $\frac{\partial f_\alpha}{\partial \alpha}(x,y)|_{\alpha = 0} = xyf(y)$
        \item $\frac{\partial^2 f_\alpha}{\partial \alpha^2}(x,y)|_{\alpha = 0} =
            (x^2 - 1) (y^2 - 1) f(y) $
    \end{enumerate}
\end{lem}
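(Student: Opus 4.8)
The plan is to treat $f_\alpha(x,y)$ as an explicit Gaussian in the variable $y$ and to extract its $\alpha$-derivatives at $\alpha = 0$ through its logarithmic derivative, which is far cleaner than differentiating the product of the prefactor $\frac{1}{\sqrt{1-\alpha^2}}$ and the exponential directly. Writing out the definition,
\begin{equation*}
    f_\alpha(x,y) = \frac{1}{\sqrt{2\pi(1-\alpha^2)}}
    \exp\left(-\frac{(y - \alpha x)^2}{2(1-\alpha^2)}\right),
\end{equation*}
claim (1) is immediate upon setting $\alpha = 0$, since the prefactor becomes $\frac{1}{\sqrt{2\pi}}$ and the exponent becomes $-y^2/2$, which recovers $f(y)$.

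For (2) and (3), I would introduce $L(\alpha) := \log f_\alpha(x,y)$ and use the identities $\partial_\alpha f_\alpha = f_\alpha L'$ and $\partial_\alpha^2 f_\alpha = f_\alpha\left((L')^2 + L''\right)$. Since $f_\alpha|_{\alpha=0} = f(y)$ by (1), the entire computation reduces to evaluating $L'(0)$ and $L''(0)$. Explicitly,
\begin{equation*}
    L(\alpha) = -\tfrac{1}{2}\log(2\pi) - \tfrac{1}{2}\log(1-\alpha^2)
    - \frac{(y-\alpha x)^2}{2(1-\alpha^2)}.
\end{equation*}
The most efficient route is to Taylor-expand $L$ to second order in $\alpha$, using $\log(1-\alpha^2) = -\alpha^2 + O(\alpha^4)$ and $(1-\alpha^2)^{-1} = 1 + \alpha^2 + O(\alpha^4)$ together with $(y-\alpha x)^2 = y^2 - 2\alpha x y + \alpha^2 x^2$. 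Collecting terms through order $\alpha^2$ should yield $L(\alpha) = \log f(y) + \alpha\, xy + \tfrac{1}{2}\alpha^2(1 - x^2 - y^2) + O(\alpha^3)$, so that $L'(0) = xy$ and $L''(0) = 1 - x^2 - y^2$. Multiplying by $f(y)$ gives (2) at once, and for (3) I would substitute into $f(y)\left((L'(0))^2 + L''(0)\right)$.

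The computation presents no genuine obstacle; it is entirely elementary. The only point requiring care is bookkeeping in the Taylor expansion — in particular, noticing that the $+\tfrac{1}{2}\alpha^2$ contributed by the log-determinant term $-\tfrac{1}{2}\log(1-\alpha^2)$ combines with the $-\tfrac{1}{2}\alpha^2(x^2+y^2)$ arising from expanding the exponent, while the cross term $-2\alpha xy$ supplies the first-order coefficient. The final step is the algebraic factorization
\begin{equation*}
    (xy)^2 + \left(1 - x^2 - y^2\right) = x^2 y^2 - x^2 - y^2 + 1 = (x^2 - 1)(y^2 - 1),
\end{equation*}
which matches the claimed form of $\partial_\alpha^2 f_\alpha|_{\alpha=0}$ and completes the proof.
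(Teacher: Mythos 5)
Your proof is correct, and it takes a cleaner computational route than the paper. The paper differentiates the product $\frac{1}{\sqrt{1-\alpha^2}} f\bigl(\frac{y-\alpha x}{\sqrt{1-\alpha^2}}\bigr)$ directly: for (2) it applies the product and chain rules, computes $\partial_\alpha\bigl(\frac{y-\alpha x}{\sqrt{1-\alpha^2}}\bigr)\big|_{\alpha=0}=-x$ and $\partial_\alpha\sqrt{1-\alpha^2}\big|_{\alpha=0}=0$, and uses $f'(y)=-yf(y)$; for (3) it merely asserts that the result ``follows from differentiating (2) without evaluating $\alpha$ at $0$,'' which, done honestly, requires tracking several product-rule terms before evaluation. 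You instead pass to $L(\alpha)=\log f_\alpha$ and Taylor-expand to second order, so that both derivatives drop out of the single expansion
\begin{equation*}
    L(\alpha) = \log f(y) + \alpha\,xy + \tfrac{1}{2}\alpha^2\left(1 - x^2 - y^2\right) + O(\alpha^3),
\end{equation*}
via $\partial_\alpha f_\alpha = f_\alpha L'$ and $\partial_\alpha^2 f_\alpha = f_\alpha\bigl((L')^2 + L''\bigr)$, with the factorization $x^2y^2 + 1 - x^2 - y^2 = (x^2-1)(y^2-1)$ closing the argument. I checked your expansion coefficients and identities; they are right. What your approach buys is that the second derivative, which the paper leaves implicit and which is the messiest part of the direct computation, becomes a two-line consequence of the same expansion that gives the first derivative; what the paper's approach buys is that it needs no logarithm and stays within the bare product rule, mirroring how the analogous derivative formulas are used elsewhere in its appendices. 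Either argument is a complete and valid proof of the lemma.
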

\begin{proof}
    (1) is obvious. (2) follows from
    \begin{equation*}
        \begin{aligned}
            & \left. \frac{\partial}{\partial \alpha}
            \left( \frac{1}{\sqrt{1 - \alpha^2}} f(\frac{y - \alpha x}{\sqrt{1 - \alpha^2}}) \right) \right|_{\alpha = 0} \\
            = & \left. \frac{1}{1 - \alpha^2} \left[
            f'(\frac{y - \alpha x}{\sqrt{1 - \alpha^2}})\cdot
            \frac{\partial}{\partial \alpha} \left( \frac{y-\alpha x}{\sqrt{1-\alpha^2}} \right) \cdot
            \sqrt{1 - \alpha^2} - f(\frac{y - \alpha x}{\sqrt{1 - \alpha^2}}) \cdot
            \frac{\partial}{\partial \alpha} \sqrt{1 - \alpha^2}\right] \right|_{\alpha = 0} \\
            = & -xf'(y) \\
            = & xyf(y).
        \end{aligned}
    \end{equation*}
    where
    \begin{equation*}
        \left. \frac{\partial}{\partial \alpha}
        \left( \frac{y-\alpha x}{\sqrt{1-\alpha^2}} \right)
        \right|_{\alpha=0} = \left. \frac{-x\sqrt{1 - \alpha^2} - (y - \alpha x)
        \frac{-\alpha}{\sqrt{1 - \alpha^2}}}{1 - \alpha^2} \right|_{\alpha = 0} = -x
    \end{equation*}
    \begin{equation*}
        \left. \frac{\partial}{\partial \alpha} \sqrt{1 - \alpha^2}
        \right|_{\alpha=0} = \left. \frac{-\alpha}{\sqrt{1 - \alpha^2}}
        \right|_{\alpha=0} = 0
    \end{equation*}

    (3) follows from differentiating (2) (without evaluating $\alpha$ at 0).
\end{proof}

\begin{lem}
    Here we list some technical computations for quick review purpose:
    \begin{enumerate}
        \item $\int_u^{+\infty} xf(x) dx = f(u)$
        \item $\int_u^{+\infty} (x^2 - 1) f(x) dx = uf(u)$
        \item $\int_u^{+\infty} x^2 f(x) dx = uf(u) + 1 - F(u)$
    \end{enumerate}
    also, $F(u) = 1 - F(-u)$ which we used sometimes for simplification.
\end{lem}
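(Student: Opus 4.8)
The plan is to reduce all three identities to the single differential relation $f'(x) = -x f(x)$, valid for the standard Gaussian density $f(x) = \frac{1}{\sqrt{2\pi}} e^{-x^2/2}$, together with the normalization $\int_u^{+\infty} f(x)\,dx = 1 - F(u)$. This relation rewrites each integrand as an exact derivative (or a combination of one with $f$ itself), so that every integral collapses to a boundary evaluation at $u$ and $+\infty$, with the contribution at $+\infty$ vanishing by the super-polynomial decay of $f$.

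For (1), I would observe that $x f(x) = -f'(x)$, so that $\int_u^{+\infty} x f(x)\,dx = -\bigl[f(x)\bigr]_u^{+\infty} = f(u)$. For (2), I would compute $\frac{d}{dx}\bigl(-x f(x)\bigr) = -f(x) - x f'(x) = (x^2-1) f(x)$, whence $\int_u^{+\infty} (x^2-1) f(x)\,dx = \bigl[-x f(x)\bigr]_u^{+\infty} = u f(u)$. Identity (3) then follows immediately by linearity: writing $x^2 = (x^2-1) + 1$ and combining (2) with $\int_u^{+\infty} f(x)\,dx = 1 - F(u)$ gives $\int_u^{+\infty} x^2 f(x)\,dx = u f(u) + (1 - F(u))$.

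For the symmetry relation $F(u) = 1 - F(-u)$, I would invoke the evenness of $f$: substituting $x = -t$ in $1 - F(-u) = \int_{-u}^{+\infty} f(x)\,dx$ yields $\int_{-\infty}^{u} f(-t)\,dt = \int_{-\infty}^{u} f(t)\,dt = F(u)$.

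There is no substantive obstacle here; these are elementary consequences of the Gaussian density solving the first-order linear ODE $f' = -xf$. The only points requiring care are verifying that the boundary terms at $+\infty$ vanish (immediate from $x^k e^{-x^2/2} \to 0$ for any fixed $k$) and the bookkeeping in (3), where the $-1$ absorbed into the $(x^2-1)$ factor is compensated by the normalization integral. Accordingly, I would present (1) and (2) as direct antiderivative computations and deduce (3) and the reflection identity as immediate corollaries.
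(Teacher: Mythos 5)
Your proposal is correct and follows essentially the same route as the paper, which likewise reduces everything to the relation $x f(x)\,dx = -df(x)$ (equivalently $f'(x) = -x f(x)$) and boundary evaluation via integration by parts. The antiderivative bookkeeping in (2), the linearity step for (3), and the reflection identity from evenness are all sound and match the intended argument.
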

\begin{proof}
    Note that xf(x)dx = -df(x), these follows from integration by parts.
\end{proof}

Now we are ready to compute the derivatives of h.
\begin{lem}\label{lem derivatives of h}
    We compute the derivatives of h up to order 2:
    \begin{enumerate}
        \item $h(0, \eta, x) = \frac{1}{F(-\eta)}$
        \item $\frac{\partial h}{\partial \alpha}(0, \eta, x) =
            \frac{f(-\eta)^2}{F(-\eta)^2} + \frac{f(-\eta)}{F(-\eta)} x $
        \item For $\frac{\partial^2 h}{\partial \alpha^2}(0,\eta, x)$, we have
            \begin{equation}
                \begin{aligned}
                    \frac{\partial^2 h}{\partial \alpha^2}(0,\eta, x) &
                    = \int_{-\eta}^{+\infty} \frac{\partial^2 h}{\partial \alpha^2}(0,\eta, y)
                    f(y) dy + 2x \int_{-\eta}^{+\infty}
                    \frac{\partial h}{\partial \alpha}(0,\eta,y) yf(y) dy \\
                    & + (x^2 - 1) \int_{-\eta}^{+\infty} h(0,\eta, y) (y^2 - 1) f(y) dy
                \end{aligned}
            \end{equation}
            and
            \begin{equation}
                \int_{-\eta}^{+\infty}
                \frac{\partial^2 h}{\partial \alpha^2}(0,\eta, y) f(y) dy =
                \frac{2f(-\eta)}{F(-\eta)} \left[ \frac{f(-\eta)^3}{F(-\eta)^2} -
                \frac{\eta f(-\eta)^2}{F(-\eta)} + \frac{f(-\eta)}{F(-\eta)} -
                f(-\eta) \right] + \frac{\eta^2 f(-\eta)^2}{F(-\eta)^2}
            \end{equation}
        \item In addition,
            \begin{equation}
                \int_{+\eta}^{+\infty}
                \frac{\partial^2 h}{\partial \alpha^2}(0,\eta,x)f(x) dx =
                4f(\eta) \left[ \frac{f(-\eta)^3}{F(-\eta)^2} -
                \frac{\eta f(-\eta)^2}{F(-\eta)} + \frac{f(-\eta)}{F(-\eta)}
                - f(-\eta) \right]
            \end{equation}
    \end{enumerate}
\end{lem}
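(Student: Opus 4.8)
The plan is to obtain all four statements by differentiating the integral equation \eqref{h def APD} in $\alpha$ up to second order and evaluating at $\alpha = 0$. The mechanism that makes this tractable is the collapse of the kernel at $\alpha = 0$: by Lemma \ref{lem compute f alpha}(1), $f_\alpha(x,y)|_{\alpha=0} = f(y)$ is independent of $x$, so the integral operator applied to any function returns an $x$-independent constant. Consequently, at each order $j$ the derivative $\frac{\partial^j h}{\partial \alpha^j}(0,\eta,x)$ will be a polynomial in $x$ of degree $j$ whose non-constant part is forced by the source terms $\frac{\partial^i f_\alpha}{\partial \alpha^i}$ (which contribute powers $x^i$), while the undetermined constant term is pinned down by a self-consistency relation. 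Throughout I would differentiate under the integral sign, which is justified by the local $C^2$ regularity and logarithmic growth bound of $h$ established in Lemma \ref{thm pde solution}, together with the exponential decay of $f$ and its derivatives.

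For part (1), setting $\alpha = 0$ in \eqref{h def APD} gives $h(0,\eta,x) = 1 + \int_{-\eta}^{+\infty} h(0,\eta,y) f(y)\,dy$, whose right-hand side is constant in $x$. Writing $h(0,\eta,x) \equiv C$ and using $\int_{-\eta}^{+\infty} f(y)\,dy = F(\eta)$ yields $C = 1 + C F(\eta)$, hence $C = 1/(1 - F(\eta)) = 1/F(-\eta)$. For part (2), I would differentiate once in $\alpha$ and evaluate at $\alpha = 0$, using Lemma \ref{lem compute f alpha}(2) so that the source term becomes $h(0,\eta,y)\cdot xyf(y) = \frac{1}{F(-\eta)}xyf(y)$. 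This forces $\frac{\partial h}{\partial \alpha}(0,\eta,x) = A + \frac{f(-\eta)}{F(-\eta)}x$ for a constant $A$, where the linear coefficient comes from the moment identity $\int_{-\eta}^{+\infty} yf(y)\,dy = f(-\eta)$. Substituting this affine form back into its defining integral and solving the resulting scalar equation for $A$ gives $A = f(-\eta)^2/F(-\eta)^2$, which is exactly (2).

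Part (3) proceeds identically at second order. Applying the Leibniz rule to \eqref{h def APD}, evaluating at $\alpha = 0$, and inserting Lemma \ref{lem compute f alpha}(2)--(3) produces directly the structural identity displayed in the statement, with cross term $2x\int \frac{\partial h}{\partial \alpha}(0,\eta,y)\,yf(y)\,dy$ and source $(x^2-1)\int h(0,\eta,y)(y^2-1)f(y)\,dy$. Denoting the constant integral by $B$, the linear coefficient by $P$, and the quadratic coefficient by $Q$, I would first evaluate $Q$ and $P$ explicitly via the moment identities $\int_{-\eta}^{+\infty}(y^2-1)f(y)\,dy = -\eta f(-\eta)$ and $\int_{-\eta}^{+\infty} y^2 f(y)\,dy = -\eta f(-\eta) + F(\eta)$; this shows $P$ equals the bracketed quantity appearing in (3). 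I would then close the system by substituting $\frac{\partial^2 h}{\partial \alpha^2}(0,\eta,x) = B + 2Px + Q(x^2-1)$ into $B = \int_{-\eta}^{+\infty}\frac{\partial^2 h}{\partial \alpha^2}(0,\eta,y)f(y)\,dy$, which reduces to the scalar equation $B\,F(-\eta) = f(-\eta)(2P - \eta Q)$ and reassembles to the stated value.

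Finally, for part (4) I would integrate the explicit quadratic $B + 2Px + Q(x^2-1)$ against $f$ over $[\eta,+\infty)$ using $\int_\eta^{+\infty} f = F(-\eta)$, $\int_\eta^{+\infty} xf(x)\,dx = f(\eta)$, and $\int_\eta^{+\infty}(x^2-1)f(x)\,dx = \eta f(\eta)$. Since $f$ is even, the term $B\,F(-\eta) = f(-\eta)(2P - \eta Q)$ combines with the remaining contributions $2Pf(\eta) + \eta Q f(\eta)$ so that the $\eta Q$ pieces cancel and the expression collapses to $4f(\eta)P$, giving the formula in (4). I expect the only genuine subtlety to be the justification of differentiation under the integral and of the polynomial ansatz at each order; both are consequences of Lemma \ref{thm pde solution} and of the fact that the $\alpha$-derivatives of $f_\alpha$ at $\alpha=0$ are polynomials in $x$ times $f(y)$, so no derivative can raise the $x$-degree beyond its order. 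The algebra itself is routine once $P$, $Q$, and $B$ are identified.
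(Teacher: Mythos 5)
Your proposal is correct and takes essentially the same route as the paper: differentiate the integral equation \eqref{h def APD} in $\alpha$, exploit the collapse $f_\alpha(x,y)|_{\alpha=0}=f(y)$ from Lemma \ref{lem compute f alpha} so that each $\alpha$-derivative at $\alpha=0$ is a polynomial in $x$ whose constant term is pinned down by a scalar self-consistency equation, and then integrate the resulting quadratic against $f$ over $(\eta,+\infty)$ to get part (4). The only (immaterial) difference is part (1), where the paper computes $h(0,\eta,x)=1/F(-\eta)$ probabilistically as the mean of a geometric waiting time and then verifies it in the equation, whereas you extract it directly from the fixed-point equation at $\alpha=0$.
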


\begin{proof}
We show (1) first. When $\alpha = 0$, $\tX_t$ is independent Gaussian signals.
Thus
    \begin{equation}
        \begin{aligned}
            h(0, \eta, x) & = \sum_{j=1}^{+\infty} j P(\tX_t\ crosses\
            -\eta\ at\ t=j\ for\ the\ first\ time) \\
            & = \sum_{j=1}^{+\infty} j (1 - F(-\eta))^{j-1} F(-\eta) \\
            & = \frac{1}{F(-\eta)}
        \end{aligned}
    \end{equation}
    To prove (2), we differentiate \eqref{h def APD} w.r.t $\alpha$ and we obtain
    \begin{equation}\label{eq for partial alpha of survival time}
        \frac{\partial h}{\partial \alpha} (\alpha, \eta, x) =
        \int_{-\eta}^{+\infty} \frac{\partial h}{\partial \alpha}
        (\alpha, \eta, y) f_{\alpha}(x,y) dy \\
        + \int_{-\eta}^{+\infty} h(\alpha,\eta,y)
        \frac{\partial f_{\alpha}}{\partial \alpha}(x,y) dy
    \end{equation}

    Evaluating at $\alpha = 0$ and using Lemma \ref{lem compute f alpha}, we get
    \begin{equation}
        \begin{aligned}
            \frac{\partial h}{\partial \alpha}(0,\eta,x) = \int_{-\eta}^{+\infty}
            \frac{\partial h}{\partial \alpha} (0, \eta, y) f(y) dy
            + \frac{f(-\eta)}{F(-\eta)} x
        \end{aligned}
    \end{equation}

    Multiplying the equation by f(x) and integrating from $-\eta$ to $+\infty$ yields
    \begin{equation}
        \int_{-\eta}^{+\infty} \frac{\partial h}{\partial \alpha}(0,\eta,x)
        f(x) dx = \frac{f(-\eta)^2}{F(-\eta)^2}
    \end{equation}
    This proves (2).
    To compute (3), we differentiate
    \eqref{eq for partial alpha of survival time} w.r.t $\alpha$. We obtain
    \begin{equation}
        \begin{aligned}
            \frac{\partial^2 h}{\partial \alpha^2}(\alpha,\eta,x) &
            = \int_{-\eta}^{+\infty}
            \frac{\partial^2 h}{\partial \alpha^2}(\alpha,\eta,y)
            f_\alpha (x,y) dy + 2 \int_{-\eta}^{+\infty}
            \frac{\partial h}{\partial \alpha}(\alpha,\eta,y)
            \frac{\partial f_\alpha}{\partial \alpha}(x,y) dy \\
            & + \int_{-\eta}^{+\infty} h(\alpha,\eta,y)
            \frac{\partial^2 f_\alpha}{\partial \alpha^2}(x,y) dy
        \end{aligned}
    \end{equation}
    Evaluating at $\alpha=0$ and using Lemma \ref{lem compute f alpha} yields
    \begin{equation}
        \frac{\partial^2 h}{\partial \alpha^2}(0,\eta,x) =
        \int_{-\eta}^{+\infty}
        \frac{\partial^2 h}{\partial \alpha^2}(0,\eta,y)f(y) dy +
        2x \int_{-\eta}^{+\infty}
        \frac{\partial h}{\partial \alpha}(0,\eta,y) yf(y) dy + (x^2 - 1)
        \int_{-\eta}^{+\infty} h(0,\eta,y) (y^2 - 1) f(y) dy
    \end{equation}
    Multiplying the equation above by f(x) and integrating from $-\eta$ to
    $+\infty$ yields
    \begin{equation}\label{eq D^2h}
        \begin{aligned}
            \int_{-\eta}^{+\infty}
            \frac{\partial^2 h}{\partial \alpha^2}(0,\eta,y)f(y) dy &
            = (1 - F(-\eta)) \int_{-\eta}^{+\infty}
            \frac{\partial^2 h}{\partial \alpha^2}(0,\eta,y)f(y) dy \\
            & + 2 \int_{-\eta}^{+\infty} xf(x) dx \cdot
            \int_{-\eta}^{+\infty} \frac{\partial h}{\partial \alpha}(0,\eta,y) yf(y) dy \\
            & + \int_{-\eta}^{+\infty} (x^2 - 1)f(x) dx \cdot
            \int_{-\eta}^{+\infty} h(0,\eta,y) (y^2 - 1)f(y) dy
        \end{aligned}
    \end{equation}
    Some quantities required are computed below:
    \begin{equation*}
        \begin{aligned}
            \int_{u}^{+\infty} xf(x) dx & = f(u) \\
            \int_{u}^{+\infty} (x^2 - 1) f(x) dx & = u f(u) \\
            \int_{-\eta}^{+\infty} \frac{\partial h}{\partial \alpha}(0,\eta,y) yf(y) dy
            & = \int_{-\eta}^{+\infty} \left( \frac{f(-\eta)^2}{F(-\eta)^2} +
            \frac{f(-\eta)}{F(-\eta)}y \right) yf(y) \\
            & = \frac{f(-\eta)^2}{F(-\eta)^2} f(-\eta) +
            \frac{f(-\eta)}{F(-\eta)} (-\eta f(-\eta) + 1 - F(-\eta) ) \\
            & = \frac{f(-\eta)^3}{F(-\eta)^2} -
            \frac{\eta f(-\eta)^2}{F(-\eta)} + \frac{f(-\eta)}{F(-\eta)} - f(-\eta)
        \end{aligned}
    \end{equation*}
    Plugging these into (\ref{eq D^2h}), we obtain (3).
    Finally, to obtain (4), we multiply (3) by f(x) and integrating
    from $+\eta$ to $+\infty$:
    \begin{equation}\label{eq D2h int}
        \begin{aligned}
            & \int_\eta^{+\infty}
            \frac{\partial h}{\partial \alpha^2}(0,\eta,x)f(x) dx \\
            = & (1-F(\eta)) \int_{-\eta}^{+\infty}
            \frac{\partial^2 h}{\partial \alpha^2}(0,\eta,y)f(y) dy +
            2 \int_{\eta}^{+\infty} xf(x) dx \cdot
            \int_{-\eta}^{+\infty} \frac{\partial h}{\partial \alpha}(0,\eta,y) yf(y) dy \\
            + & \int_{\eta}^{+\infty} (x^2-1)f(x) dx \cdot
            \int_{-\eta}^{+\infty} h(0,\eta,y) (y^2-1)f(y) dy
        \end{aligned}
    \end{equation}
    Plugging (3) into \eqref{eq D2h int}
    \begin{equation}
        (\ref{eq D2h int}) = 4f(\eta)
        \left[ \frac{f(-\eta)^3}{F(-\eta)^2} -
        \frac{\eta f(-\eta)^2}{F(-\eta)} + \frac{f(-\eta)}{F(-\eta)}
        - f(-\eta) \right]
    \end{equation}

Alternatively, we compute $\frac{\partial h}{\partial \eta}$ with a method
similar to what we did with $\frac{\partial h}{\partial \alpha}$. First, we
verify that (1) satisfies the equation \eqref{h def APD} by simply plugging (1)
into the equation. For $\frac{\partial h}{\partial \eta}$, differentiate the
equation w.r.t. $\eta$ and get
    \begin{equation}
        \frac{\partial h}{\partial \eta}(\alpha,\eta,x) =
        h(\alpha, \eta, -\eta+0)f_\alpha(-\eta+0) +
        \int_{-\eta}^{+\infty}
        \frac{\partial h}{\partial \eta}(\alpha,\eta,x)f_\alpha(x) dx
    \end{equation}
    Evaluating at $\alpha = 0$, we get
    \begin{equation}
        \frac{\partial h}{\partial \eta}(0,\eta,x) =
        \frac{1}{F(-\eta)} f(-\eta) + \int_{-\eta}^{+\infty}
        \frac{\partial h}{\partial \eta}(0,\eta,x) f(x) dx
    \end{equation}
    Multiply it by f(x) and integrate from $-\eta$ to ${+\infty}$, we get
    \begin{equation}
        \int_{-\eta}^{+\infty}
        \frac{\partial h}{\partial \eta}(0,\eta,x) f(x) dx =
        \frac{f(-\eta)}{F(-\eta)} (1-F(-\eta)) +
        \int_{-\eta}^{+\infty}
        \frac{\partial h}{\partial \eta}(0,\eta,x) f(x) dx \cdot (1-F(-\eta))
    \end{equation}
    which gives
    \begin{equation*}
        \frac{\partial h}{\partial \eta}(0,\eta,x) = \frac{f(-\eta)}{F(-\eta)^2}
    \end{equation*}

\end{proof}

\bibliographystyle{amsplain}
\bibliography{references}

\end{document}